\definecolor{darkred}{rgb}{0.5,0,0}
\definecolor{darkgreen}{rgb}{0,0.5,0}
\definecolor{darkblue}{rgb}{0,0,0.5}
\newtheorem{theorem}{Theorem}[section]
\newtheorem{assumption}{Assumption}[section]
\newtheorem{corollary}[theorem]{Corollary}
\newtheorem{proposition}[theorem]{Proposition}
\newtheorem{lemma}[theorem]{Lemma}
\theoremstyle{definition}
\newtheorem{definition}[theorem]{Definition}
\theoremstyle{remark}
\newtheorem{remark}[theorem]{Remark}
\newtheorem{notation}[theorem]{Notation}
\newtheorem{observation}{Observation}
\newcommand\A{\mathcal{A}}
\newcommand\mC{\mathcal{C}}
\renewcommand\S{\mathcal{S}}
\newcommand{\K}{\mathcal{K}}
\newcommand{\V}{\mathcal{V}}
\newcommand{\U}{\mathcal{U}}
\newcommand{\X}{\mathcal{X}}
\newcommand{\F}{\mathcal{F}}
\newcommand{\R}{\mathbb{R}}
\newcommand{\C}{\mathbb{C}}
\newcommand{\Z}{\mathbb{Z}}
\newcommand\G{\mathcal{G}}
\renewcommand\H{\mathcal{H}}
\newcommand{\ddt}{\frac{d}{dt}}
\renewcommand{\P}{\mathbb{P}}
\newcommand\lie[1]{\mathfrak{#1}}
\newcommand{\g}{\lie{g}}
\newcommand{\on}{\operatorname}
\newcommand{\Aut}{ \on{Aut} }
\newcommand{\Ad}{ \on{Ad} } 
\newcommand{\End}{\on{End}}
\newcommand\Vect{\on{Vect}}
\newcommand\Id{\on{Id}}
\newcommand\grad{\on{grad}}
\newcommand\inj{\on{inj}}
\newcommand\interior{\on{int}}
\newcommand\loc{{\on{loc}}}
\newcommand\dvol{{\on{dvol}}}
\newcommand{\lra}{\longrightarrow}
\newcommand{\hra}{\hookrightarrow}
\newcommand{\weakto}{\rightharpoonup}
\newcommand{\tensor}{\otimes}
\newcommand{\ol}{\overline}
\newcommand{\delbar}{\ol{\partial}}
\newcommand\bs{\backslash}
\newcommand\Lam{\Lambda}
\newcommand\Sig{\Sigma}
\newcommand\sig{\sigma}
\newcommand\eps{\epsilon}
\newcommand\Om{\Omega}
\newcommand\om{\omega}
\newcommand{\lan}{\langle}
\newcommand{\ran}{\rangle}
\newcommand{\hh}{{\frac{1}{2}}}
\newcommand{\thh}{{\frac{3}{2}}}
\newcommand\Mod[1]{\lVert #1 \rVert}
\newcommand\bigMod[1]{\left \lVert #1 \right \rVert}
\newcommand\qu{/\kern-.7ex/} 
\begin{document}

\author[Sushmita Venugopalan]{Sushmita Venugopalan}

\title[Yang-Mills heat flow on gauged holomorphic maps]{Yang-Mills heat flow on gauged holomorphic maps}

\begin{abstract} We study the gradient flow lines of a Yang-Mills-type
  functional on a space of gauged holomorphic maps. These maps are defined on a
  principal $K$-bundle on a Riemann surface, possibly with boundary, where
   $K$ is a compact connected Lie group. The target space of the
  gauged holomorphic maps is a compact K\"{a}hler Hamiltonian $K$-manifold or a symplectic vector space with linear $K$-action and a proper moment map. We
  prove long time existence of the gradient flow. The flow lines
  converge to critical points of the functional, modulo sphere
  bubbling in $X$. Symplectic vortices are the zeros of the functional
  we study. When the base Riemann surface has non-empty boundary,
  similar to Donaldson's result in \cite{Do:bdry}, we show that there
  is only a single stratum; that is, any element of $\H(P,X)$ can be complex
  gauge transformed to a symplectic vortex. This is a version of
  Mundet's Hitchin-Kobayashi result \cite{Mund} on a surface with
  boundary.
\end{abstract}
\maketitle
\parskip 0in
\tableofcontents
\parskip .1in

\section{Introduction}
A {\em gauged holomorphic map} is a pair consisting of a connection on a principal bundle 
and a section  of an associated fiber bundle that is holomorphic with respect to that
connection. It is an equivariant generalization of $J$-holomorphic
maps on symplectic manifolds. {\em Vortices} are gauged holomorphic
maps that are zeros of a Yang-Mills-type functional, which we call the
vortex functional, and which involves the curvature of the connection
and a term involving the bundle section. Vortices play an important
role in gauged Gromov-Witten theory. This motivates us to study the
Morse theory of the vortex functional on the space of gauged
holomorphic maps. The base manifold is a connected Riemann surface with metric,
and we study both the case that this Riemann surface is closed, and
the case that it has non-empty boundary.  We show that the gradient
flow of the functional exists for all time and has an infinite time
limit. This limit is a vortex in the case the base manifold has
boundary. But when the base manifold is closed, there are multiple
Morse strata. However, if at the starting point of the flow line, the
value of the vortex functional is low enough, then we are guaranteed
that the flow converges to a vortex.

Our set-up is an infinite dimensional analog of the abstract setting
of Kirwan's thesis \cite{Kirwan}, which we briefly describe. Let $K$
be a compact connected Lie group. Let $(X,\om)$ be a compact K\"ahler
$K$-Hamiltonian manifold with a {\em moment map} $\Phi:X \to
\mathfrak{k}^*$. Suppose $G$ is a complex reductive group which contains $K$
as a maximal compact subgroup. Since $X$ is K\"ahler, the $K$-action
extends to a holomorphic $G$-action. Suppose $X$ has the structure of
a polarized projective variety. The Geometric Invariant Theory (git)
quotient is the quotient of the semistable locus $X^{\on{ss}}$ under the
$G$-orbit closure relation. By the Kempf-Ness theorem in \cite{KN},
the git quotient coincides with the symplectic quotient
$\Phi^{-1}(0)/K$.  On the symplectic side, we consider the gradient
flow of the function $|\Phi|^2$ under the K\"ahler metric. By results
in \cite{Kirwan}, the flow induces a $G$-invariant stratification of
the manifold $X$.  The stratum to which a point belongs is given by
the infinite time limit of its gradient flow. The largest stratum,
corresponding to points that flow to $\Phi^{-1}(0)$, is open in $X$
and coincides with the semistable locus $X^{\on{ss}}$. Further each of the
higher Morse strata can be characterized algebraically. The strata of
a point $x \in X$ is given by a one-parameter subgroup of $G$ which is
maximally destabilizing for $x$.

The ideas in the abstract setting of Kirwan have been applied to many
infinite dimensional problems and the Morse theoretic approach has
been fruitful.  The work of Atiyah and Bott \cite{AB} introduces the
above ideas on the space of connections $\A$ on a Hermitian vector
bundle over a Riemann surface. On the symplectic side there is a Morse
stratification coming from the flow of the Yang-Mills functional $\A
\ni A \mapsto \Mod{F_A}_{L^2(\Sig)}$, which coincides with a
holomorphic stratification coming from the Harder-Narasimhan type of
the holomorphic structures on the vector bundle. The analog of the
Kempf-Ness theorem in this case is the Narasimhan-Seshadri theorem in
\cite{NS}. This theorem, as re-stated by Donaldson in \cite{Do:NS},
says that every stable bundle admits a Yang Mills connection that
assumes the minimum value of the functional. 
Daskalopoulos \cite{Daska} proved that the holomorphic and Morse
stratifications agree. R{\aa}de \cite{Rade} proved the same result by
a different approach. He analytically proved the existence of gradient
flow lines of the Yang-Mills functional and examined their convergence
properties at infinite time. Similar Morse-theoretic ideas have been
applied to the space of holomorphic vector bundles equipped with some
extra data. For example, Wilkin \cite{Wilkin} studies the space of
Higgs pairs $(A,\phi)$, where $A$ is a connection on a complex vector
bundle $E$ over a Riemann surface, and $\phi$ is an $E$-valued
$(1,0)$-form such that $\delbar_A \phi=0$. With a standard choice of
symplectic structure, the action of the gauge group has moment map
$F_A+[\phi, \phi^*]$. The paper shows that the Morse stratification
obtained under the $L^2$ norm of the moment map corresponds to a
holomorphic stratification. Our set-up can be thought of as a
generalization of Higgs-bundles. Instead of a vector bundle $E$, we
consider fiber bundles whose fibers are K\"ahler manifolds.

We now describe the set-up of the paper rigorously and state the main
results.  Let $P \to \Sig$ be a principal $K$-bundle on a compact
connected Riemann surface $\Sig$ with metric, and possibly with boundary. The target
manifold $X$ is a compact K\"ahler Hamiltonian $K$-manifold described above.
A {\em gauged holomorphic map} from $P$ to $X$ is a pair $(A,u)$
consisting of a connection $A$ on the $K$-bundle $P \to \Sig$ together
with a holomorphic section of the associated fiber bundle $P(X) := (P
\times X)/K$. The complex structure on $P(X)$ is given by the complex
structure on $\Sig$ and $X$ and the connection $A$. The space of
gauged holomorphic maps $\H(P,X)$ has a formal Hamiltonian action of
the group of gauge transformations $\K(P)$. The moment map is given by
$*F_A + \Phi(u)$, where $F_A$ is the curvature of $A$. The {\em vortex
  functional} is the $L^2$-norm square of the moment map:
\begin{equation}\label{eq:functional}
  \H(P,X) \to \R, \quad (A,u) \mapsto \Mod{*F_A+\Phi(u)}_{L^2}^2.
\end{equation}
We study the long-time existence and convergence behaviour of the
downward gradient flow trajectories of the vortex functional. The gradient flow
 trajectory starting at a gauged holomorphic map
$(A_0,u_0)$ is a time-dependent pair $(A,u):[0,\infty) \mapsto \A(P) \times \Gamma(\Sig,P(X))$ which satisfies the equations
\begin{equation} \label{eq:evolve}
  \begin{split}
    \ddt A &= -*{d}_AF_{A,u}, \quad \ddt u = -J_X(F_{A,u})_u, \quad F_{A,u}|_{\partial \Sig}=0, \\
    A(0)&=A_0, \quad u(0)=u_0.
  \end{split}
\end{equation} 
In the above equations, for a gauged holomorphic map $(A,u)$, $F_{A,u}:=*F_A +
\Phi(u) \in \Gamma(\Sig,P(\mathfrak{k}))$, and for a section $\xi \in
\Gamma(\Sig,P(\mathfrak{k}))$, the vector field $\xi_u \in
\Gamma(\Sig,u^*T^{\on{vert}}(P(X)))$ is given by
$\xi_u(s):=\xi(s)_{u(s)}$ for all $s \in \Sig$. This system of
equations is a non-linear perturbation of the Yang-Mills gradient flow
equation, which in turn can be realized as a perturbation of the heat
equation. So we alternately refer to the solution of \eqref{eq:evolve}
as the {\em heat flow}. Our results are as follows.
\begin{theorem} \label{thm:flowintro} {\rm(Long time existence of
    gradient flow)} Suppose $\Sig$, $P$, $K$ and $X$ are as above. The
  gradient flow equation \eqref{eq:evolve} for the vortex functional
  has a solution for all time $(A_t,u_t) \in C^0_{loc}([0,\infty),H^1
  \times C^0)$. There is a family of gauge transformations $g_t \in
  H^2(\K)$ so that $g_t(A_t,u_t)$ is smooth on $[0,\infty) \times
  \Sig$ away from the corner $\{0\} \times \partial \Sig$.
\end{theorem}

The flow lines converge to a critical point of the functional, but the
convergence is modulo bubbling in the fibers in $P(X)$.
\begin{theorem}\label{thm:convintro} {\rm(Convergence of flow)} Suppose the gauged holomorphic map $(A_0,u_0)$ satisfies $\Phi(u_0)|_{\partial \Sig}=0$.
Let $(A_t,u_t)$ be the smooth
  gradient flow (modulo gauge) starting from the pair $(A_0,u_0)$
 calculated in Theorem
  \ref{thm:flowintro}. Then,
there exists a sequence $t_i \to \infty$, a
  sequence of unitary gauge transformations $k_i \in \K_{H^3}$, a limit pair
  $(A_\infty,u_\infty) \in \A(P)_{H^2} \times \Gamma(\Sig,P(X))_{C^1}$
  and a finite bubbling set $Z \subset \Sig$ (see Definition
  \ref{def:bubset}) such that
  \begin{enumerate}
  \item \label{part:conva} $k_i(A_{t_i}) \to A_\infty$ weakly in $H^2$.
  \item \label{part:convb} If $\Sig$ does not have boundary,
    $k_iu_{t_i}$ Gromov converges to a nodal gauged holomorphic map
    with principal component $u_\infty$. In compact subsets of $\Sig
    \bs Z$, $k_iu_{t_i} \to u_\infty$ in $C^1$.
  \item \label{part:convc} If $\Sig$ has boundary, $k_iu_{t_i} \to u_\infty$ in $C^1(\Sig)$ and
    there is no bubbling.
  \item \label{part:convd} The limit $(A_\infty,u_\infty)$ is a critical point of the functional
    \eqref{eq:functional}, i.e.\linebreak $d_{A_\infty}F_{A_\infty,u_\infty}=0$
    and $(F_{A_\infty,u_\infty})_{u_\infty}=0$.
  \end{enumerate}
\end{theorem}
In part \eqref{part:convb} above, $(A_\infty,u_\infty)$ is the
principal component of the limit. In addition, there would sphere
bubble trees in the fibers $P(X)_z$, for the points $z$ in the
bubbling set $Z$. A much stronger result can be obtained in the case
that $\Sig$ has boundary.

\begin{theorem} \label{thm:bdryintro} {\rm(Unique limit when $\Sig$
    has boundary)} Suppose $\partial \Sig \neq \emptyset$ and $p>2$.
\begin{enumerate}
\item \label{part:bdrya} The limit
  $(A_\infty,u_\infty)$  computed in Theorem \ref{thm:convintro} is a vortex and lies
  in the same complex gauge orbit as the flow line $(A_t,u_t)$. 
\item \label{part:bdryb} For a given
  flow line $(A_t,u_t)$, the limit $(A_\infty,u_\infty)$ is unique up
  to up to gauge transformations. 
\end{enumerate}
Therefore, there is a unique $\xi \in \Gamma(\Sig, P(\mathfrak{k}))_{W^{2,p}}$ such that $\xi|_{\partial \Sig}=0$ and\linebreak $e^{i\xi}(A_0,u_0)$ is a vortex.
\end{theorem}

Theorem \ref{thm:bdryintro} implies that there is a single stratum for
the flow of the vortex functional when the base manifold has boundary.
This result can be compared to Donaldson's result \cite{Do:bdry} on
Yang-Mills gradient flow. On a two-dimensional base manifold with
boundary, it says that any connection can be complex gauge transformed
to a flat connection -- there is no semi-stability condition
involved. But, the proof is different because in the case of gauged
holomorphic maps, our result overcomes the additional difficulty of
ruling out the formation of bubbles in the fibers of $P(X)$.  If
the base manifold $\Sig$ is closed there are multiple strata. However,
if in addition, we know that the flow line starts with a low enough
value of the vortex functional, then we can get a similar result to
Theorem \ref{thm:bdryintro}.

\begin{theorem}\label{thm:closedintro}
  Let $p>2$ and $\partial \Sig=\emptyset$. Assume that the $K$-action
  on $\Phi^{-1}(0)$ has finite stabilizers and the energy of the
  gauged holomorphic map $(A_0,u_0)$ is bounded by
  $c_0^2\on{vol}(\Sig)$, where the constant $c_0(X)$ is defined in
  Section \ref{subsec:uniquelimit}. Suppose $(A_\infty,u_\infty)$ is
  the limit of the heat flow trajectory starting at $(A_0,u_0)$
  produced by Theorems \ref{thm:flowintro} and
  \ref{thm:convintro}. Then, $(A_\infty,u_\infty)$ is a vortex, it is
  in the complex gauge orbit of $(A_0,u_0)$ and it is unique up to
  gauge transformations. Therefore, there is a unique element
  $\xi \in \Gamma(\Sig,P(\mathfrak{k}))_{W^{2,p}}$ such that $e^{i\xi}(A_0,u_0)$ is a
  vortex.
\end{theorem}

The results of Theorems \ref{thm:bdryintro} and \ref{thm:closedintro} are
based on the fact that the limit vortex does not have any
infinitesimal stabilizers of the gauge group action. The situation is
similar to the finite dimensional Hamiltonian manifold with the 
stable=semistable assumption, which implies that the semistable orbits
are closed. Analogously, in our case, the complex gauge orbits containing the flow line is closed, which lets us prove that the limit vortex is also contained in that orbit. As a corollary, we
then rule out bubbling.
 
All the above results continue to hold if the target manifold $X$ is a symplectic vector space with a linear group action and proper moment map.
\begin{theorem}\label{thm:vstarget} Suppose $X$
  is a symplectic vector space with linear action of the group $K$ and
  a proper moment map. If $\Sig$ has boundary we additionally assume that $u_0|_{\partial \Sig} \subset \Phi^{-1}(0)$. 
 Then the results in Theorems
  \ref{thm:flowintro}--\ref{thm:closedintro}
  hold.
\end{theorem}

Our work has important applications in the study of symplectic
vortices. The earliest literature on vortices considers the case of
vector space targets - for example Jaffe-Taubes \cite{JT},
Garcia-Prada \cite{garcia:direct}, Bradlow \cite{Brad:stability}
etc. When the target is a general symplectic manifold, Cieliebak et
al. \cite{CGMS} construct a moduli space of vortices on a compact base
manifold in the absence of sphere bubbling. Ott \cite{Ott} constructed
a compactification of the moduli space that incorporates sphere
bubbling. The phenomenon arising from allowing the base curve to vary
is studied by Mundet-Tian \cite{MundetTian}. In the large area limit,
vortices are related to $J$-holomorphic curves in the symplectic
quotient. Then, the gauged Gromov Witten invariants of $X$ are related
to the Gromov-Witten invariants on the symplectic quotient $X \qu G$
via a quantum-Kirwan morphism, which was proposed by Gaio-Salamon
\cite{GS} and defined in greater generality by Woodward
\cite{W:qkirwan}. The
quantum-Kirwan morphism is defined by counting vortices on the complex
plane $\C$. A compactification of the space of vortices on $\C$ was
constructed by Ziltener \cite{Zilt:thesis}, \cite{Zilt:QK}. An
alternate way of defining gauged Gromov-Witten theory is via quasimaps
in Ciocan-Fontanine-Kim-Maulik \cite{CKM:quasimap}.  The symplectic
version of quasimaps is given by vortices defined on Riemann surfaces that have
infinite cylindrical ends considered by the author in \cite{Venu:cyl}.

An alternate way to study the moduli space of symplectic vortices is
via Hitchin-Kobayashi (HK) correspondences, which establishes a
bijection (or homeomorphism, in some cases) between vortices and
isomorphism classes of semistable gauged maps. The moduli space of
the latter object can be studied from an algebro-geometric point of
view. Such a correspondence was first provided in the case of vortices
with vector space target, as in Jaffe-Taubes, Bradlow. Mundet provided
a HK correspondence for vortices on compact Riemann surfaces. Theorem
\ref{thm:bdryintro} stated above can be seen as a version of Mundet's
result for a Riemann surface with boundary. In that case, any gauged
holomorphic map $(A,u)$ contains a symplectic vortex in its orbit,
that is unique up to unitary gauge transformations. In fact, the
vortex is the limit of the heat flow trajectory that starts at
$(A,u)$. This shows that there is no semistability conditions when
$\Sig$ has boundary. In case $\Sig$ is closed, Theorem
\ref{thm:closedintro} provides a sufficient condition for a gauged
holomorphic map to be semistable. This condition is easier to check
than Mundet's semistability condition. We point out that Ling Lin
\cite{LingLin} has also studied heat flow on closed surfaces and used
it to give an alternate proof of Mundet's HK correspondence.

The most important application of our results is for proving HK
correspondences for vortices on Riemann surface with infinite
volume. In case $\Sig=\C$, this is done in joint work with Woodward
\cite{VW:affine}, and is proved by applying Theorem
\ref{thm:closedintro} on $\P^1$ equipped with a sequence of increasing
metrics. This result is crucially used to define the quantum-Kirwan
morphism in \cite{W:qkirwan}. In a similar way, the author has also obtained a HK
correspondence for vortices on curves with infinite cylindrical ends in \cite{Venu:cyl},
resulting in a homeomorphism to the moduli space of quasimaps.

We expect that the boundary result Theorem \ref{thm:bdryintro} can be
used to prove a classification result for vortices defined on the
complex half plane, analogous to the result for affine vortices in
\cite{VW:affine}. Analogous to the closed case, vortices on the half
plane will be required in the definition of an open quantum Kirwan
morphism proposed by Woodward \cite{W:GF}. This morphism would play an
important role in open gauged Gromov-Witten theory which has been
studied by Frauenfelder \cite{Frau:AG}, Woodward \cite{W:GF} and Xu
\cite{Xu:GF}, \cite{Xu:Lagcpt}. For the
convergence of flow in the case of a base manifold with boundary, we
need an additional hypothesis that the maps $u_t$ map the boundary of
the domain $\partial \Sig$ to the zero level set of the moment
map. This is a natural assumption in open gauged Gromov-Witten theory,
where one studies gauged maps $(A,u)$ on Riemann surfaces with
boundary that map the boundary $\partial \Sig$ to a $K$-invariant
Lagrangian $L \subset \Phi^{-1}(0) \subset X$.

Another application of heat flow in the case of a closed base manifold
is to provide a Morse stratification on the space of gauged
holomorphic maps. This goal is not achieved in this paper, and is
a subject of future research. 

The proof of the existence of heat flow uses similar techniques as
R{\aa}de \cite{Rade}. The main point of difference is that our flow
problem involves $u$ which is a map to a compact K\"{a}hler
manifold. While solving the flow equations, we assume that $u(t)$ is
in $C^0$, but in the time direction, we assume its regularity is in a
Sobolev class. We have to address some issues in defining such a mixed
space. The reason why it is necessary to have $u_t$ in $C^0$, is
because the perturbative lower order terms in the parabolic flow
equations involve composition of functions, and we need $u_t \in C^0$
to use the estimates for such cases. Donaldson \cite{Do:ASD} gives a
simpler way of obtaining flow lines, albeit modulo gauge. But this
approach does not work for us because of the non-linear moment map
term. However, after showing the existence of flow, we adapt the
technique in \cite{Do:ASD} to show that our flow is smooth in time and
space directions modulo gauge.

This paper is organized as follows: Section \ref{sec:prelim}
describes connections,\linebreak gauged holomorphic maps etc. Section
\ref{sec:heatflow} proves Theorem \ref{thm:flowintro} --- the long-time
existence of gradient flow and its regularity properties. Section
\ref{sec:conv} discusses the convergence behavior of gradient flow
trajectories and proves Theorems~\ref{thm:convintro},
\ref{thm:bdryintro} and \ref{thm:closedintro}. The results are extended to the case of vector
space target in Section \ref{sec:vstarget}.  Sections
\ref{sec:sobolevspaces} and \ref{sec:compfunc} carefully describe the
Sobolev spaces and their properties used in Section
\ref{sec:heatflow}.

\subsubsection*{Acknowledgements} This paper arose from my PhD thesis. I am
grateful to my advisor Chris Woodward for his support, encouragement
and guidance. I also acknowledge Chennai Mathematical Institute, where I was a post-doctoral Fellow at the time of writing a revised version of this paper.

\section{Preliminaries}\label{sec:prelim}
\subsection{Hamiltonian actions on K\"ahler
  manifolds}\label{subsec:kahham}
Let $K$ be a compact connected Lie group and let $(X,\om,J)$ be a
K\"ahler $K$-Hamiltonian manifold. This means the $K$-action on $X$
preserves the K\"ahler form $\om$ and the complex structure
$J$. Further, the action has a {\em moment map} $\Phi : X \to \mathfrak{k}^*$
that is equivariant and satisfies $\iota(\xi_X) \omega = {d} \lan \Phi,
\xi \ran , \ \forall \xi \in \mathfrak{k}$, where $\xi_X \in \Vect(X)$ is given
by the infinitesimal action of $\xi$ on $X$. Since $K$ is compact,
$\mathfrak{k}$ has an $Ad$-invariant metric. We fix such a metric and identify
$\mathfrak{k}$ with $\mathfrak{k}^*$ and so the moment map is a map from $X$ to $\mathfrak{k}$. The
gradient vector field of the the function $\hh |\Phi|^2$ on $X$ with
respect to the K\"ahler metric is $\grad |\Phi|^2(x)=J\Phi(x)_X$. This
can be seen as follows. For $v \in T_xX$,
$$\hh \lan \on{grad} |\Phi|^2, v\ran_g=\lan {d}\Phi(v),\Phi\ran_{\mathfrak{k}} = \iota_{\Phi(x)_X}\omega(v)=\lan v,J\Phi(x)_X\ran_g,$$
where $g$ is the Riemannian metric $\om(\cdot, J\cdot)$ on $X$.

The {\em complexified Lie group} of $K$, denoted by $G$, is a complex
reductive group that contains $K$ as a maximal compact subgroup and
whose Lie algebra is the complexification of the Lie algebra of $K$,
i.e. $\g=\mathfrak{k} \oplus i\mathfrak{k}$.  A compact Lie group has a complexification ---
see Hochs \cite{Hochs}, p205. Further, there is a diffeomorphism (see
Helgason \cite[VI.1.1]{Helg2})
\begin{equation}
  \label{eq:cartan}
  K \times \mathfrak{k} \to G, \quad (k,s) \mapsto ke^{is}.
\end{equation}
On a K\"{a}hler manifold the action of $K$ extends to a unique
holomorphic action of $G$ (see \cite{GS:quant}). Since the gradient of
$\hh \Mod{\Phi}^2$ is $J\Phi(x)_X$, the gradient flow preserves the
$G$ orbit, and therefore, the Morse strata of $\hh \Mod{\Phi}^2$ are
$G$-invariant.
\begin{lemma} \label{lem:uniquemu0} A $G$-orbit in $X$ has at most one
  $K$-orbit on which $\Phi=0$.
\end{lemma}
\begin{proof}
  Suppose $x \in X$ and $g \in G$ are such that
  $\Phi(x)=\Phi(gx)=0$. By \eqref{eq:cartan}, we can write
  $g=ke^{is}$, where $k \in K$ and $s \in \mathfrak{k}$. Since $\Phi^{-1}(0)$ is
  $K$-invariant, we can assume $g=e^{is}$. Now, for $0 \leq t \leq 1$,
  \begin{align}\label{eq:mommapconv}
    \ddt \lan \Phi(e^{its}x),s \ran &= \lan
    s_X(e^{its}x),s_X(e^{its}x) \ran \geq 0,
  \end{align}
  which implies that $s_X(e^{its}x)=0$ for all $t \in [0,1]$, and
  therefore $x=e^{its}x$.
\end{proof}

\subsection{The space of connections}\label{subsec:conn}
Let $(\Sig, j_\Sig, g_\Sig)$ be a Riemann surface with metric and $P
\to \Sig$ a principal $K$-bundle over it. A {\it connection} $A$ on
$P$ is a $\mathfrak{k}$-valued 1-form on $P$ that is $K$-equivariant and
satisfies $A(\xi_P)=\xi$. Here $K$-equivariance means that
$A_{pk}(vk)=\Ad_{k^{-1}} A_p(v)$ for all $p \in P$, $v \in T_pP$ and
$k \in K$. For $\xi \in \mathfrak{k}$, $\xi_P$ is a vector field on $P$ defined
as $\xi_P(p):=\ddt (p\exp(t\xi))$ for $p \in P$.  Let $\A(P)$ denote
the space of all connections. It is an affine space modeled on
$\Omega^1(\Sig,P(\mathfrak{k}))$, where $P(\mathfrak{k}):=(P \times \mathfrak{k})/K$ is the adjoint
bundle. The form ${d} A+\hh[A \wedge A] \in \Om^2(P,\mathfrak{k})$ is basic, so
it descends to a two-form $F_A \in \Om^2(\Sig,P(\mathfrak{k}))$, which is the
{\em curvature} of the connection $A$.  A {\em gauge transformation}
is an automorphism of $P$ -- it is an equivariant bundle map $P \to
P$. The group of gauge transformations on $P$ is denoted by $\K(P)$. A
gauge transformation can be viewed as a section $k:\Sig \to P \times_K
K$ on the associated bundle $P \times_K K$, where $K$ acts on itself
by conjugation. A gauge transformation $k \in \K(P)$ acts on the space
of connections by pullback by $k^{-1}$. The infinitesimal action of
the element $\xi \in \Gamma(\Sig,P(\mathfrak{k}))$ on a connection $A$ is $-{d}_A
\xi$.

On a trivial bundle $P=\Sig \times K$, there is a trivial connection,
denoted by ${d}$ and the adjoint bundle has a trivialization
$P(\mathfrak{k})\simeq \Sig \times \mathfrak{k}$. The space $\A$ is then equal to ${d} +
\Om^1(\Sig,\mathfrak{k})$. The formula of curvature is $F_A={d} a + \hh[a \wedge
a].$ A gauge transformation $k:\Sig \to K$ acts on the connection
$A={d} + a$ as
\begin{equation}
  \label{eq:gatran}
  k(A)= {d}+({d} k k^{-1} + \Ad_ka).
\end{equation}

Given a $K$-manifold $M$, there is an associated bundle $P(M):=(P \times
M)/K$, which is a fiber bundle on $\Sig$ with fibers diffeomorphic to
$M$.  A connection $A$ defines a covariant derivative ${d}_A$ on the
space of sections $\Gamma(\Sig,P(M))$. On a local trivialization of
$P$, $d_A$ is given by
$$\Gamma(\Sig,P(M)) \ni u \mapsto {d}_A u:={d} u+ a_u\in \Om^1(\Sig,u^*T^{\on{vert}}P(M)).$$
At a point $z \in \Sig$, $a_u(z)$ is the infinitesimal action of
$a(z)$ at $u(z)$. In the particular case when $M$ is $\mathfrak{k}$ with the
adjoint $K$-action, the covariant derivative of a section $\xi \in
\Gamma(\Sig,\mathfrak{k})$ is ${d}_A\xi:={d}\xi + [a,\xi]$.

In the next 2 paragraphs, we show that the space of connections is an
infinite-dimensional K\"ahler manifold with a formal Hamiltonian
action of the group of gauge transformations. First, we show that a
connection determines a holomorphic structure on associated fiber
bundles, through which we get a complex structure on $\A(P)$. If $M$
is a complex $K$-manifold, the associated fiber bundle $P(M)$ has a
holomorphic structure given by the operator $\delbar_A$ corresponding
to a connection $A \in \A(P)$, where
$$\delbar_A:\Gamma(\Sig,P(M)) \to \Om^{0,1}(\Sig,
u^*T^{\on{vert}}P(M)), \quad u \mapsto ({d}_Au)^{0,1}.$$ The almost
complex structure corresponding to the operator $\delbar_A$ is indeed a
holomorphic structure on $P(M)$ because: by the Newlander-Nirenberg
theorem, the condition for the above almost complex structure to be
integrable is that $\delbar_A^2=0$ (see \cite{AB} p.555 or \cite{DoKr}
Theorem 2.1.53), which is same as saying the $(0,2)$-part of the
curvature $F_A$ vanishes, but this is vacuously true in our case since
the base manifold is a Riemann surface. In particular, taking $M=G$ on
which $K$ acts by left multiplication, produces a holomorphic
$G$-bundle $P_\C$. Let $\mC(P)$ denote the space of holomorphic
structures on $P_\C$.  The above construction yields a map from
$\A(P)$ to $\mC(P)$, which we claim is a bijection. Given a
holomorphic bundle $P_\C$, a choice of a section $\sig: \Sig \to
P_\C/K$ gives a principal $K$-bundle $P$ by pullback of the bundle
$P_\C \to P_\C/K$, so that $P$ is naturally a submanifold of $P_\C$.
The intersection $TP \cap J(TP)$ defines a connection in $TP$
\cite{Singer}. The correspondence between connections and holomorphic
structures gives an infinitesimal isomorphism
$$ T_A \A =\Om^1(\Sig, P(\mathfrak{k})) \to T_C\mC=\Om^{0,1}(\Sig, P(\mathfrak{k})), \quad
a \mapsto a^{0,1}.$$
The complex structure on $\mC$ pulls back to a complex structure on
$\A$ given by $J_\A a= a \circ j_\Sig$.

The space of connections $\A(P)$ has a symplectic form on it: for $A
\in \A$ and $a$, $b \in T_A\A=\Omega^1(\Sig,P(\mathfrak{k}))$, $\om_\A(a,b):=
\int_\Sig \lan a \wedge b \ran_{\mathfrak{k}}$. The symplectic form is compatible
with the complex structure $J_\A$. Let $\K(P)_\partial$ be the
subgroup of $\K(P)$ consisting of gauge transformations $k$ that are
identity on the boundary, i.e. $k|_{\partial \Sig}=\Id$.  The action
of $\K(P)_\partial$ on $\A(P)$ is formally Hamiltonian and has a
moment map $A \mapsto *F_A \in \Gamma(\Sig,P(\mathfrak{k}))$. We verify the
moment map condition: for any $a \in T_A\A:=\Om^1(\Sig,P(\mathfrak{k}))$ and
$\xi \in \on{Lie}(\K(P)_\partial)$,
\begin{equation*}
  \iota_{\xi_\A}\om_\A(a)=\int_\Sig\lan -{d}_A\xi,a\ran =\int_\Sig \lan *{d}_Aa,\xi\ran ={d}_\A\lan *F_A,\xi\ran(a).
\end{equation*}
The second equality comes from integration by parts and the fact that
$\xi|_{\partial \Sig}=0$.  Observe that the $L^2$-norm square of the
moment map is the Yang-Mills functional $A \mapsto
\Mod{F_A}_{L^2(\Sig)}^2$. Analogous to the finite dimensional case in
Section \ref{subsec:kahham}, the gradient of this functional is
\begin{align*}
  \on{grad}(A \mapsto \Mod{F_A}_{L^2(\Sig)}^2) = J_\A(-
  d_A(*F_A))=d_A^*F_A.
\end{align*}

The {\em group of complex gauge transformations} $\G(P)$ consists of
automorphisms of the associated bundle $g:P_\C \to P_\C$, which can be
viewed as sections of the bundle $P \times_K G$, where $K$ acts on $G$
by left multiplication. By the Cartan diffeomorphism
\eqref{eq:cartan}, a complex gauge transformation $g$ can be written
as $ke^{i\xi}$, where $k \in \K(P)$ and $\xi \in
\Gamma(\Sig,P(\mathfrak{k}))$. The group $\G(P)$ acts on the space of
holomorphic structures on $P_\C$ via pullback. Through the isomorphism
$\A(P) \to \mC(P)$ the action of $\G(P)$ on $\mC(P)$ pulls back to an
action on $\A(P)$. The isomorphism $\A(P) \to \mC(P)$ is
$\K(P)$-equivariant and so the complexified gauge group action on
$\A(P)$ extends the action of the unitary gauge group $\K(P)$. For any
$\xi \in \Gamma(\Sig,P(\mathfrak{k}))$, the infinitesimal action of $i\xi$ on a
connection $A$ is $-{d}_A \xi \circ j_\Sig$, which is equal to
$J_\A(-{d}_A \xi)$ and can also be re-written as $*{d}_A\xi$.

\subsection {Gauged holomorphic maps}\label{subsec:ghm}
The concept of $J$-holomorphic curves extends to the equivariant
setting by introducing a connection into the picture.  A gauged
holomorphic map from $P$ to $X$ is a pair $(A,u)$ consisting of a
connection $A$ on $P$ and a section $u:\Sig \to P(X)$ that is
holomorphic with respect to $\delbar_A$. The space of gauged
holomorphic maps from the principal bundle $P \to \Sig$ to target $X$
is called $\H(P,X)$. The group of complex gauge transformations
$\G(P)$ acts on gauged holomorphic pairs diagonally:
$g(A,u)=(g(A),gu)$. We remark that this action preserves
holomorphicity because $\delbar_{g(A)}(gu)=(g \circ \delbar_A \circ
g^{-1})(gu)=g(\delbar_A u)$.

Analogous to the space of connections, the action of unitary gauge
transformations on the space of pairs $(A,u)$ in $\A(P) \times
\Gamma(\Sig,P(X))$ is formally Hamiltonian. We say `formal' because we
have not given this space a manifold structure. The symplectic form is
\begin{align*}
  \lan(a_1,\xi_1), (a_2,\xi_2)\ran \mapsto \int_\Sig (a_1 \wedge a_2 + \om_X(\xi_1,\xi_2) \dvol_\Sig),
\end{align*}
  where $(a_i,\xi_i) \!\in\! T_{(A,u)}(\A(P) \!\times\! \Gamma(P(X)))
  \!=\! \Om^1(\Sig,P(\mathfrak{k})) \!\times\! \Gamma(\Sig,u^*T^{\on{vert}}X)$.
The formal moment map for the $\K(P)_\partial$-action is
\begin{align*}
  \A(P) \times \Gamma(\Sig,P(X)) \to \Gamma(\Sig,P(\mathfrak{k})), \quad (A,u)
  \mapsto F_{A,u}:=*F_A+\Phi(u).
\end{align*}
Since $\Phi$ is $K$-equivariant, it induces a map $P(X) \to P(\mathfrak{k})$,
which is also denoted $\Phi$, so that $\Phi(u)$ in the above
definition is a section of $P(\mathfrak{k}) \to \Sig$.

Analogous to the finite-dimensional case and the case of the space of
connections $\A(P)$, we consider the norm square of the moment map
$(A,u) \mapsto \Mod{F_{A,u}}_{L^2(\Sig)}^2$. The gradient at $(A,u)$
is
\begin{align}\label{eq:gradgh}
  J_{(A,u)}(F_{A,u})_{\A(P) \times
    \Gamma(P(X))}=(*d_AF_{A,u},J_X(F_{A,u})_u).
\end{align}
This justifies the right hand side of the heat flow equation
\eqref{eq:evolve}.  We recall the notation that given $\xi \in
\Gamma(P(\mathfrak{k}))$, $\xi_u \in u^*T^{vert}P(X)$ denotes the action of
$\xi$ on the image of $u$, i.e. for $z \in \Sig$,
$\xi_u(z)=\xi(z)_{u(z)}$. The gradient flow preserves $\G$ orbits and
so it preserves $\H(P,X)$.

\section{Heat flow}\label{sec:heatflow}
\subsection{Existence of trajectories on compact target
  manifold}\label{subsec:flowexist}
In this section, we prove the long-term existence of the gradient flow
of the vortex functional, given by \eqref{eq:evolve}, when the target
is a compact K\"ahler Hamiltonian manifold.
\subsubsection{Setting up the system of equations for gradient flow}
If $t \mapsto (A_t,u_t)$ is a solution of the system
\eqref{eq:evolve}, then the time-dependent $P(\mathfrak{k})$-valued $0$-form $F_{A_t,u_t}:=*F_{A_t} + \Phi(u_t)$ satisfies
\begin{align*}
  \ddt F_{A_t,u_t}&=*{d}_{A_t}\left(\ddt A_t\right)+u_t^*{d}\Phi\left(\ddt u_t\right)\\
  &=-{d}_{A_t}^*{d}_{A_t}F_{A_t,u_t}+u_t^*{d}\Phi(-J(F_{A_t,u_t})_{u_t}),
\end{align*}
where $d_{A_t}^*:=-*d_{A_t}*$ is the formal adjoint of ${d}_{A_t}$. For a connection $A$, the Hodge
Laplacian is defined as $\Delta_A=d_A^*d_A+d_Ad_A^*$. For
$0$-forms, $d_A^*d_A=\Delta_A$, which is an elliptic
operator. Writing $F_t:=F_{A_t,u_t}$, the above equation is
equivalent to
\begin{equation}\label{eq:gradF}
  \frac{dF_t}{dt}=-\Delta_{A_t}F_t - u_t^*{d}\Phi(J(F_t)_{u_t}).
\end{equation}
Except for the non-linear term $u_t^*{d}\Phi(J(F_t)_{u_t})$, \eqref{eq:gradF} is
parabolic. Roughly speaking, once we solve this equation in $F$, $A_t$
is given by $A_0-\int_0^t*d_AF_{A,u}$ and $u_t$ is obtained by
integrating the vector field $(F_{A_t,u_t})_{u_t}$. But unfortunately,
$A_t$ occurs in the term $\int_0^t*d_{A_t}F_{A_t,u_t}$ and $A_t$, $u_t$ occur
in the equation in $F_t$. Hence, we need to solve the three equations
(\eqref{eq:evolve} and \eqref{eq:gradF}) as a coupled system.

We know, if $(A_t,u_t)$ is a solution of \eqref{eq:evolve}, then
$(A_t,F_{A_t,u_t},u_t)$ is a solution of
\begin{equation}\label{eq:evolve2}
  \ddt A_t = *{d}_AF, \quad \ddt F_t = - {d}_{A_t}^* {d}_{A_t} F_t -  u_t^* {d}\Phi J(F_t)_{u_t}, \quad \ddt u_t = J(F_t)_{u_t}.
\end{equation}
with initial data at $t=0$ given by $(A_0,F_{A_0,u_0},u_0)$. Note
that $F_t \in \Gamma(\Sig,P(\mathfrak{k}))$ is an independent variable in this
system, whereas $F_{A_t}$ denotes the curvature of the connection $A_t$ and
$F_{A_t,u_t}=*F_{A_t}+u_t^*\Phi \in \Gamma(\Sig,P(\mathfrak{k}))$.
\begin{remark}
  A solution $(A_t,F_t,u_t)$ of \eqref{eq:evolve2} whose initial value
  $(A_0,F_0,u_0)$ satisfies $F_0=F_{A_0,u_0}$ will satisfy
  $F_t=F_{A_t,u_t}$ for all time $t$.
\end{remark}

We use $A_0$ as the base connection, and write the connection $A_t$
on $P$ as $A_0+a_t$, where $a_t \in \Omega^1(\Sig,P(\mathfrak{k}))$. In an analogous way, write $u_t =
\exp_{u_0} \xi_t$, where $\xi_t \in \Gamma(\Sig,
u_0^*T^{\on{vert}}P(X))$. Then, the system \eqref{eq:evolve2} becomes
\begin{align}\label{eq:evolve3}
    \ddt a_t - *{d}_{A_0} F_t &= *[a_t,F_t]\\
 \nonumber   \ddt F_t + \Delta_{A_0} F_t &=-u_t^*  {d}\Phi J(F_t)_{u_t} - *[a_t\wedge *{d}_{A_0} F_t] - [{d}_{A_0}^*a_t,F_t] \\
\nonumber &\quad  - *[a_t\wedge *[a_t,F_t]]\\
 \nonumber   \ddt \xi_t &= -({d}\exp_{u_0})(\xi_t)^{-1}(J(F_t)_{u_t})
\end{align}
with initial conditions $a_0=0$, $F_0=*F_{A_0,u_0}$, $\xi_0=0$. The
advantage of writing the system this way is that now, $a$, $F$ and
$\xi$ are just time-dependent sections of vector bundles over $\Sig$. We next explain
the expression $({d}\exp)(\xi)^{-1}$ in the last equation.
\begin{remark}\label{rem:injrad} (\rm{Injectivity radius})
  The exponential map on $X$ defines for every $s \in \Sig$, a map
  $\exp_{u_0(s)}:T_{u_0(s)}X \to X$, and its derivative
  ${d}\exp_{u_0(s)}(\xi):T_{u_0(s)}X \to T_{\exp_{u_0}\xi(s)}X$ for
  every tangent vector $\xi \in T_{u_0(s)}X$. The inverse of the
  derivative ${d}\exp_{u_0(s)}(\xi)^{-1}$ is well-defined if the
  derivative is injective. For that to be the case, we ensure
$$\Mod{\xi}_{C^0} < \on{inj}_X,$$
where $\on{inj}_X$ is the injectivity radius of $X$, which we now
define. The injectivity radius at a point $x \in X$, denoted by
$\on{inj}_X(x)$, is the radius of the largest ball around the origin
in $T_xX$ on which the exponential map is a diffeomorphism. Taking
infimum over all of $X$, we obtain $\on{inj}_X:=\inf_{x \in X}
\on{inj}_X(x)$. For a compact manifold $\on{inj}_X >0$.
\end{remark}
\subsubsection{Description of Sobolev spaces}
To show the existence of a solution, we work in Sobolev spaces of
sections of vector bundles. The vector bundles are of the type
$E:=\wedge^k(T^*\Sig) \tensor P(\mathfrak{k})$. A covariant derivative on $E$ is
determined by a choice of connection $A$ on $P$ and the Levi-Civita
connection on $\Sig$, and we denote it by $\nabla_A$. For a
non-negative integer $s$, we denote by $H^s(\Gamma(\Sig,E))$ or
$H^s(\Sig,E)$ the completion of the space of sections $\Gamma(\Sig,E)$
under the Sobolev norm
\begin{equation}
  \label{eq:sob1}
\Mod{\sig}_{H^s(\Sig,E)}^A:=\left(\sum_{i=0}^s\int_\Sig|\nabla^i_A \sig|^2\right)^{1/2}.  
\end{equation}
To solve the evolution equations, we introduce mixed Sobolev
completions of time-dependent sections.  For any real $r$, $s$, $T>0$,
$H^{r,s}([0,T] \times \Sig,E)$, also referred to as $H^{r,s}$ or
$H^r(H^s)$, is the space of (equivalence classes of) time-dependent
sections that are in Sobolev class $H^r$ in time and $H^s$ in
space. When $r$ and $s$ are non-negative integers, $H^{r,s}$ is the
completion of $C^\infty([0,T] \times \Sig,E)$ under the norm
$$\Mod{\sig}_{r,s}^A:=\left (\sum_{i=0}^r \sum_{j=0}^s\Mod{T^{-(r-i)}\frac{d^i}{dt^i}\nabla_A^j\sig}^2_{L^2(\Sig \times [0,T]}\right )^{1/2}.$$
For other exponents, the spaces $H^{r,s}$ are defined by interpolation
and duality. For negative Sobolev exponents, the elements of
$H^{r,s}$, need not be almost-everywhere defined sections, they are
just distributions. The norm $\Mod{\cdot}_{r,s}$ depends on the base
connection $A$ but is equivalent for any choice of connection, so that
the space $H^{r,s}$ is well-defined independent of the connection. The
base connection need not be smooth. A connection $A$ is said to be
$H^1$ if for any smooth connection $A'$, the difference $A-A'$ is in
$\Om^1(\Sig,P(\mathfrak{k}))_{H^1}$.  If the base connection $A$ is a $
H^1$-connection, then the spaces $H^{r,s}$ can be defined for $s \in
[-2,2]$. Detailed definitions and properties of these spaces are given
in Section \ref{sec:sobolevspaces}. A crucial property is that
although the operator norms depend on the choice of base connection,
if the curvature satisfies $\Mod{F(A)}_{L^2}<\kappa$, the operator
norms are bounded by constants dependent only on $\kappa$ and
independent of $A$. To prove the existence of the flow line starting
at $(A_0,u_0)$, we fix $A_0$ as the base connection for the Sobolev
norms.

Another type of Sobolev space we use is $H^r([0,T],C^0(\Sig,E))$ - it
is the space of (equivalence classes of) sections that are in
Sobolev-class $r$ in time and are $C^0$ in space. This space has norm
$$\Mod{\sig}_{r,C^0}:=\sup_{z \in \Sig}\Mod{\sig_z}_{H^r([0,T],E_z)}.$$
The way this is defined, it is more appropriate to call it
$C^0(\Sig,H^r([0,T],E))$, but we call it $H^r(C^0)$ to preserve our
convention of having the time-index outside. This space satisfies the
expected embedding properties, for example $H^{r,s} \hra H^r(C^0)$ for
$s>1$, but that is not obvious because the spaces
$H^{r,s}=H^r([0,T],H^s(E))$ are defined with the time and space
co-ordinates in a different order. These details will be presented in
Section \ref{subsec:interchange}. This space is used, for example,
when the bundle $E$ is $u_0^*T^{\on{vert}}P(X)$, where it is useful to
have operator norms be independent of the derivatives of the map
$u_0$.

We next define the Banach space in which we solve the system of
equations \eqref{eq:evolve3}.  With initial value $F(0) \in
L^2(\Sig,P(\mathfrak{k}))$, we expect to solve for $F$ in spaces of the type
$H^{\hh + r,-2r}([0,T] \times \Sig, P(\mathfrak{k}))$ (see Lemma
\ref{lem:heateqnbaseg}). We fix a small constant $\eps \in (0,1/16)$. The system \eqref{eq:evolve3} is solved in the Banach space
\begin{align*}
  U(T)=\{(a,F,\xi)&|a \in H^{1/2+\eps} (H^{1-2\eps}), \\
  &F \in H^{1/2+\eps}(H^{-2\eps})\cap
  H^{-1/2+\eps}(H_\partial^{2-2\eps}),\xi \in H^{1/2+\eps}(C^0)\}.
\end{align*}
For $s>1$, $H^s_\partial(\Sig,E)$ is defined as the subspace of
$H^s(\Sig,E)$ consisting of sections that vanish on the boundary of
$\Sig$. From this point onward, for brevity of notation, we drop the subscript $t$ from the time-dependent sections $a$, $F$ and $\xi$.
We will prove:
\begin{proposition} \label{prop:flowexist} Let $(A_0,u_0) \in
  \A(P)_{H^1} \times \Gamma(\Sig,P(X))_{C^0}$ be a gauged map. Then
  for any $\kappa> 0$ there exists $t_0(\kappa) > 0$ such that if
  $\Mod{F_{A_0}}_{L^2} < \kappa$ then the initial value problem
  \eqref{eq:evolve3} has a unique solution $(a,F,\xi) \in
  C^0([0,t_0],H^1 \times L^2\times C^0)$.
\end{proposition}
With this Proposition, we can prove the existence of a unique solution
for the flow equation for all time.
\begin{corollary}\label{cor:heat_nonreg} Suppose the target manifold
  $X$ is compact. Let $(A_0,u_0) \in \A(P)_{H^1} \times
  \Gamma(\Sig,P(X))_{C^0}$ be a gauged map. Then, the initial value
  problem \eqref{eq:evolve} has a unique solution for all time
  $(A_t,u_t) \in C^0_\loc([0,\infty),\A(P)_{H^1} \times
  \Gamma(\Sig,P(X))_{C^0})$.
\end{corollary}
\begin{proof} By compactness of $\Sig$, for any gauged map $(A,u)$,
  there is a uniform bound on the moment map term:
  $\Mod{u^*\Phi}_{L^2} \leq \Mod{\Phi}_{C^0}\on{Vol}(\Sig) \leq
  c$. Therefore, the norm of the curvature differs from the vortex
  functional by a constant at most: $\Mod{F_A}_{L^2} \leq
  \Mod{F_{A,u}}_{L^2} + c$. Applying Proposition \ref{prop:flowexist}
  with $\kappa=\Mod{F_{A_0,u_0}}_{L^2}+c$, we get the flow for a time
  interval $[0,t_0]$, with $(A_{t_0},u_{t_0}) \in H^1 \times C^0$. The
  vortex functional $\Mod{F_{A_t,u_t}}_{L^2}^2$ decreases along the
  flow line $(A_t,u_t)$, and so, $\Mod{F_{A_{t_0},u_{t_0}}}_{L^2} <
  \Mod{F_{A_0,u_0}}_{L^2}$. Since $\Mod{F_{A_{t_0}}}_{L^2} < \kappa$,
  we can get flow for the time interval $[t_0,2t_0]$ starting from the
  pair $(A_{t_0},u_{t_0})$. The process is repeated to get the flow
  line for all time $t \in [0,\infty)$.
\end{proof}

To prove Proposition \ref{prop:flowexist}, we define certain Banach
spaces needed to state intermediate results. The first one $U_P(T)$ is
a subspace of $U(T)$ consisting of sections that vanish at $t=0$.
\begin{align*}
  U_P(T)=\{ &(a,F,\xi)| a \in H_P^{1/2+\eps}(H^{1-2\eps}), \\
  &  F \in H_P^{1/2+\eps}(H^{-2\eps})\cap H^{-1/2+\eps}(H_\partial^{2-2\eps}), \xi \in H_P^{1/2+\eps}(C^0) \}\\
  W(T)=\{&(a,F,\xi)| a \in H^{-1/2+\eps}(H^{1-2\eps}), F \in H^{-1/2+\eps}(H^{-2\eps}),\\ 
  &  \xi \in H^{-1/2+\eps}(C^0) \}\\
  \X= \{& (a_0,F_0,\xi_0)|a_0 \in H^1, F_0 \in H^0, \xi_0 \in C^0 \}
\end{align*}
\begin{notation} We call $x:=(a,F,\xi)$ and $x_i:=(a_i,F_i,\xi_i).$
\end{notation}

\subsubsection{Outline of proof of Proposition \ref{prop:flowexist}}
The terms in the system \eqref{eq:evolve3} can be broken into 2 parts
- the leading order terms and the rest. The leading order terms form
an operator
\begin{align*}
  L: U(T) & \to W(T)\\
  (a,F,\xi) & \mapsto \left(\ddt a - *{d}_{A_0}F, \left(\ddt + \Delta_{A_0}\right)F,
  \ddt\xi\right).
\end{align*}

When restricted to $U_P(T)$, this operator is invertible (see Lemma
\ref{lem:Lbd}).
The terms in the right hand side of \eqref{eq:evolve3} form a
non-linear operator $Q:U(T) \to W(T)$. We break up the solution into 2
parts $x=x_1+x_2$, the first is an approximate solution and the second
is a correction.  The approximate solution $x_1$ is in $U(T)$ and
satisfies $Lx_1=0$, $x_1(0)=x_0$ and can be found uniquely (see Lemma
\ref{lem:Mbd}). The correction $x_2$ is in $U_P(T)$ and satisfies
$$Lx_2 = Q(x_1+x_2).$$
The existence of a unique value of $x_2$ is proved for small $T$ using
implicit function theorem.

Let $M$ denote the operator whose input is the initial value
$x_0=(a_0,F_0,\allowbreak \xi_0)$ and output is the approximate solution
$x_1=(a_1,F_1,\xi_1)$. That is,
\begin{align*}
  M: \X \to U(T), \quad (a_0,F_0,\xi_0)\mapsto (a_1,F_1,\xi_1)
\end{align*}
where $x_1(0)=x_0$ and $L(a_1,F_1,\xi_1)=0.$

The terms in $Q$ are split into $Q_1$, $Q_2$, $Q_3$ in a way that they
have a linear, quadratic and cubic bound on them respectively. (See
Lemma \ref{lem:Qbd}.)
\begin{align*}
  &Q:U(T) \to W(T) \quad Q=Q_1+Q_2+Q_3\\
  &Q_1 : (a,F,\xi) \mapsto (0,-u_0^*{d}\Phi(JF_{u_0}),JF_{u_0})\\
  &Q_2 :(a,F,\xi) \mapsto (*[a,F],-*[a\wedge *{d}_{A_0}F]\\
  &\hspace{7em} -[{d}_{A_0}^*a,F]-((\exp_{u_0}\xi)^*{d}\Phi (JF_{\exp_{u_0} \xi})-u_0^*{d}\Phi (JF_{u_0})),\\
  &\hspace{7em} -(({d}\exp_{u_0}\xi)^{-1}(JF_{\exp_{u_0}\xi}) - JF_{u_0})\\
 & Q_3 : (a,F,\xi) \mapsto (0,-*[a\wedge *[a,F]],0)
\end{align*}

\subsubsection{Bounds on $L$, $M$ and $Q$}
The next 3 Lemmas prove that $L$, $M$, $Q$ are well-defined operators
and that they satisfy certain bounds, given $\Mod{F(A_0)}_{L^2}
\leq \kappa$. The constants in these bounds, denoted by $c_\kappa$ are
independent of $(A_0,u_0)$, $T$ and depend only on $\kappa$.
\begin{lemma}\label{lem:Lbd} The operator $L:U_P(T) \to W(T)$ is
  invertible. For any $\kappa$, there exists a constant $c_\kappa$
  such that if $\Mod{F(A_0)}_{L^2} \leq \kappa$ then $\Mod{L^{-1}}
  \leq c_\kappa.$
\end{lemma}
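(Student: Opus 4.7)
The plan is to exploit the block-triangular structure of $L$: the third component depends only on $\xi$, the second only on $F$, and the first couples $a$ to $F$ but not to $\xi$. Writing out $L(a,F,\xi) = (g_1,g_2,g_3)$, I would invert $L$ component by component, solving the $F$-equation first, then using the $F$ obtained to solve the $a$-equation, while the $\xi$-equation decouples entirely. Throughout, I work on the subspace $U_P(t_0)$ of sections vanishing at $t=0$, where $L$ is actually a bijection.

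The main step is the parabolic block. Given $g_2 \in H^{-1/2+\eps}(H^{-2\eps})$, I would solve
\begin{equation*}
(\ddt + \Delta_{A_0}) F = g_2, \qquad F(0) = 0, \qquad F|_{\partial\Sigma}=0,
\end{equation*}
and invoke the forthcoming maximal-regularity statement (lemma \ref{lem:heateqnbaseg}) for the heat equation with base connection $A_0 \in H^1$. This produces a unique solution
\begin{equation*}
F \in H_P^{1/2+\eps}(H^{-2\eps}) \cap H_P^{-1/2+\eps}(H_\delta^{2-2\eps})
\end{equation*}
with norm bounded by $c_K \Mod{g_2}_{H^{-1/2+\eps}(H^{-2\eps})}$. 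The $c_K$ here is the content of the ``uniform in $A_0$'' principle highlighted in the discussion of the Sobolev spaces: although $\Delta_{A_0}$ and the interpolation spaces $H^s$ both depend on $A_0$, the relevant operator norms are dominated by a constant depending only on $\Mod{F_{A_0}}_{L^2}$, via the results of section \ref{sec:sobolevspaces}.

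The remaining two components are essentially time-integrations. For $g_3 \in H^{-1/2+\eps}(C^0)$ I would set $\xi(t) = \int_0^t g_3(s)\,ds$; fibrewise this uses the standard one-dimensional isomorphism $\ddt : H_P^{1/2+\eps}([0,t_0]) \to H^{-1/2+\eps}([0,t_0])$, taking the sup over $x \in \Sigma$ to land in $H_P^{1/2+\eps}(C^0)$ with a universal constant (independent even of $K$). For the $a$-block, with $F$ already in hand, I would set $a(t)=\int_0^t (g_1(s) + *d_{A_0} F(s))\,ds$. Here I need $*d_{A_0}$ to be bounded $H^{-1/2+\eps}(H_\delta^{2-2\eps}) \to H^{-1/2+\eps}(H^{1-2\eps})$, with norm $c_K$; pointwise in $t$ this is just loss of one covariant derivative, and the dependence on $A_0$ is again absorbed into the $K$-dependent constant via the machinery of section \ref{sec:sobolevspaces}. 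Time integration then converts the temporal exponent $-1/2+\eps$ to the required $1/2+\eps$ with vanishing initial data.

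The hard part is genuinely packaged into lemma \ref{lem:heateqnbaseg} and the norm-equivalence statements of section \ref{sec:sobolevspaces}: everything else in the proof is soft triangular bookkeeping plus an elementary fibrewise temporal integration. Once those ingredients are available, summing the three component bounds gives the claim that $L : U_P(t_0) \to W(t_0)$ is invertible with $\Mod{L^{-1}} \leq c_K$.
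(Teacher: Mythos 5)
Your proposal is, in substance, the paper's own proof: both exploit the upper-triangular block structure of $L$, invert the parabolic $F$-block with a $c_K$-bounded maximal-regularity estimate, and reduce the $a$- and $\xi$-blocks to fibrewise time integration $\int_0$ together with the $c_K$-bound on $\nabla_{A_0}:H^{2-2\eps}\to H^{1-2\eps}$ obtained from \eqref{eq:Da} and proposition \ref{prop:curvbound}. The one slip is the heat-equation reference: for inverting $\ddt+\Delta_{A_0}$ on $U_P(t_0)$ you need the \emph{inhomogeneous} equation with zero initial data, which is lemma \ref{lem:heateqnbasef} (giving $F\in H_P^{1/2+\eps,-2\eps}\cap H_P^{-1/2+\eps,2-2\eps}$ from $g_2\in H^{-1/2+\eps,-2\eps}$). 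Lemma \ref{lem:heateqnbaseg}, which you cite, is the \emph{homogeneous} initial-value problem and is what the paper uses in lemma \ref{lem:Mbd} to produce the free solution $x_1 = Mx_0$; it does not apply to the $U_P$-block where the initial data is forced to vanish. With that reference corrected, your argument coincides with the paper's.
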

\begin{proof}
  In matrix form,
  \[ L= \begin{pmatrix} \ddt &-*{d}_{A_0} &0 \\
    0 &\ddt+\Delta_{A_0} &0\\
    0 &0 &\ddt
  \end{pmatrix}. \] The operators
  \begin{equation*}
    \ddt : H_P^{1/2+\eps,1-2\eps} \to H_P^{-1/2+\eps, 1-2\eps}, \quad 
    \ddt : H_P^{1/2+\eps}(C^0) \to H_P^{-1/2+\eps}(C^0)
  \end{equation*}
  have as their inverse the integration operator $\int_0$ defined on
  the respective spaces, which is bounded by $c_\kappa$ using Lemma
  \ref{lem:integrationtime}. The operator $\ddt + \Delta_{A_0}:H^{1/2+\eps, -2\eps}_{P,}\cap
  H^{-1/2+\eps, 2-2\eps}_{,\partial}\to H^{-1/2+\eps, -2\eps}$ has an
  inverse with norm $\leq c_\kappa$ by Lemma \ref{lem:heateqnbasef}.

  Last, we look at $*{d}_{A_0}$. The operator
  $\nabla_{A_0}:H^{2-2\eps}\to H^{1-2\eps}$ has norm bounded by
  $c_\kappa$ for all $t \in [0,T]$ (using \eqref{eq:Da}). This induces
  $$\nabla_{A_0}:H^{-1/2+\eps, 2-2\eps}\to H^{-1/2+\eps,1-2\eps}$$ with
  the same bound on the norm (see \eqref{eq:bundlemap}). On 0-forms,
  $\nabla_{A_0}={d}_{A_0}$ and so $\Mod{*{d}_{A_0}} \leq c_\kappa$.
\end{proof}
\begin{lemma}\label{lem:Mbd} The operator $M$ is well-defined. For any
  $\kappa>0$, there exists a constant $c_\kappa$ such that if
  $\Mod{F(A_0)}_{L^2}<\kappa$, $\Mod{M} \leq c_\kappa T^{-\eps}.$
\end{lemma}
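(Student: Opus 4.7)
My plan is to write $M(x_0) = x_1 = (a_1, F_1, \xi_1)$ componentwise, solving the three decoupled triangular equations $Lx_1 = 0$ with $x_1(0) = x_0$ one at a time, then to bound each component in the appropriate slot of $U(t_0)$. Explicitly, $\xi_1(t,x) \equiv \xi_0(x)$ is constant in time, $F_1(t) = e^{-t\Delta_{A_0}}F_0$ is the solution of the homogeneous heat equation, and $a_1(t) = a_0 + \int_0^t *d_{A_0} F_1(s)\,ds$. Thus $M$ factors through the heat propagator and an integration, both of which have already-established bounds in the Sobolev scale.

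The bound on $\xi_1$ is the cleanest: since $\xi_1$ is $t$-independent, $\|\xi_1\|_{H^{1/2+\eps}([0,t_0],C^0)} = \|\mathbf{1}_{[0,t_0]}\|_{H^{1/2+\eps}}\|\xi_0\|_{C^0}$, and the paper's weighted $H^r$ norm on $[0,t_0]$ evaluates to $t_0^{1/2}$ on constants at $r=0$ and $t_0^{-1/2}$ at $r=1$ (because of the $t_0^{-(r-i)}$ weights), so interpolation at level $1/2+\eps$ gives the factor $t_0^{-\eps}$. The initial-data piece of $a_1$ is handled identically, using in addition the continuous inclusion $H^1 \hra H^{1-2\eps}$. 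For the heat-equation part $F_1$, I will invoke lemma \ref{lem:heateqnbaseg}, which by hypothesis provides the estimate
\begin{equation*}
\|F_1\|_{H^{1/2+\eps}(H^{-2\eps})} + \|F_1\|_{H^{-1/2+\eps}(H^{2-2\eps}_\delta)} \le c_K t_0^{-\eps}\|F_0\|_{L^2},
\end{equation*}
with constant depending only on $K$ because the lower-order coefficients of $\Delta_{A_0}$ are controlled by $\|F(A_0)\|_{L^2}$ via the estimates already recorded in section \ref{sec:sobolevspaces}.

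For the integral contribution to $a_1$, I will compose three bounded maps, exactly as in the proof of lemma \ref{lem:Lbd}: the operator $*d_{A_0}: H^{-1/2+\eps}(H^{2-2\eps}_\delta) \to H^{-1/2+\eps}(H^{1-2\eps})$ is bounded by $c_K$ (as shown there), and $\int_0 : H^{-1/2+\eps}(H^{1-2\eps}) \to H^{1/2+\eps}(H^{1-2\eps})$ is bounded by $c_K$ via lemma \ref{lem:integrationtime}. Composing with the heat bound on $F_1$ yields the required control of $a_1 - a_0$, and adding the constant-in-time piece $a_0$ gives the full estimate for $a_1$.

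The step I expect to require the most care is verifying the $t_0^{-\eps}$ homogeneity. For the initial-data pieces (constant in $t$) this must come out of the specific $t_0$-weighted definition of the interpolation norm, and for the heat-equation part it must come from the parabolic scaling used in lemma \ref{lem:heateqnbaseg}; the integration operator $\int_0$ does not introduce any negative power of $t_0$, so the $t_0^{-\eps}$ must be tracked through each input. Once the three componentwise estimates are summed, the bound $\|Mx_0\|_{U(t_0)} \le c_K t_0^{-\eps}\|x_0\|_{\X}$ follows.
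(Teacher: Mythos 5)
Your proof is correct and follows essentially the same route as the paper's: solve the triangular system $Lx_1=0$, $x_1(0)=x_0$ componentwise ($\xi_1\equiv\xi_0$, $F_1$ via Lemma~\ref{lem:heateqnbaseg}, $a_1=a_0+\int_0^t *d_{A_0}F_1$), then bound each component into the corresponding slot of $U(t_0)$ by composing the heat-kernel estimate, the $c_K$-bound on $d_{A_0}$, and Lemma~\ref{lem:integrationtime}. If anything you are more explicit than the paper about where the factor $t_0^{-\eps}$ enters — the paper states several intermediate estimates without it, whereas you correctly locate it both in the $t_0^{-r}$ factor of Lemma~\ref{lem:heateqnbaseg} and in the $t_0$-weighted time norm evaluated on constants, which is exactly where it comes from.
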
 
\begin{proof}
  By Lemma \ref{lem:heateqnbaseg}, given $F_0 \in L^2$, the system
  \begin{equation*}
    \ddt F_1 + {d}_{A_0}^*{d}_{A_0}F_1=0, \quad F_1(0)=F_0
  \end{equation*}  
  has a unique solution $F_1 \in H^{1/2+\eps, -2\eps}\cap
  H_{,\partial}^{-1/2+\eps, 2-2\eps}$ satisfying
$$\Mod{F_1}_{H^{1/2+\eps,-2\eps}\cap H^{-1/2+\eps,2-2\eps}} \leq c_\kappa \Mod{F_0}_{H^0}.$$
Define $a_1(t):=a_0 + \int_0^t *{d}_{A_0} F_1$. Then,
$$\bigMod{\int_0^t* {d}_{A_0}F_1}_{\hh+\eps,1-2\eps} \leq c_\kappa\Mod{F_1}_{-\hh+\eps,2-2\eps}$$
because $\Mod{{d}_{A_0}} \leq 2\Mod{\nabla_{A_0}} \leq c_\kappa$ by
\eqref{eq:Da} and $\int_0^t$ has norm $\leq c$ by Lemma
\ref{lem:integrationtime}. So,
$$\Mod{a_1}_{\hh+\eps,1-2\eps} \leq c_\kappa(\Mod{a_0}_{H^1}+\Mod{F_0}_{H^0}).$$
Finally, since $\frac{d \xi_1}{dt}=0$, we set $\xi_1(t)=\xi_0$, and
$\Mod{\xi_1}_{\hh+\eps,C^0} \leq c\Mod{\xi_0}_{C^0}$ for some constant
$c$.
\end{proof}
\begin{lemma}\label{lem:Qbd}
  Let $x=(a,F,\xi)$. Assume $\Mod \xi_{C^0} \leq \on{inj}_X$ (see
  Remark \ref{rem:injrad}). Then, $Q:U(T) \to W(T)$ is a well-defined
  map. It is differentiable so that ${d} Q(x):U(T) \to W(T)$ is a
  linear map for each $x \in U(T)$. If $\Mod{F(A_0)}_{L^2}<\kappa$,
  there exist constants $c_\kappa$ so that
  \begin{align*}
    \Mod{Q_1x}_W &\leq c_\kappa T^{\hh-2\eps}\Mod{x}_U, \qquad
    \Mod{Q_2(x)}_W \leq c_\kappa T^{\hh-2\eps}\Mod{x}^2_U, \\
    \Mod{Q_3(x)}_W &\leq c_\kappa T^{\hh-2\eps}\Mod{x}^3_U.
  \end{align*}
  The derivatives satisfy
  \begin{align*}
    \Mod{{d} Q_1(x)} &\leq c_\kappa T^{\hh-2\eps},\qquad\quad  \Mod{dQ_2(x)}
    \leq c_\kappa T^{\hh-2\eps}(1+\Mod{x}_U),\\
    \Mod{{d} Q_3(x)} &\leq c_\kappa T^{\hh-2\eps}\Mod{x}^2_U.
  \end{align*}
  Putting them together,
  \begin{align*}
    \Mod{Q(x)}_W \leq c_\kappa T^{\hh-2\eps}(1+\Mod x_U^3), && \Mod{d Q(x)} \leq c_\kappa T^{\hh-2\eps}(1+\Mod{x}_U^2).
  \end{align*}
\end{lemma}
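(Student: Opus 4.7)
The plan is to handle $Q_1$, $Q_2$, and $Q_3$ in turn, reducing each bound to iterated applications of the multiplication and composition results for $H^{r,s}$ and $H^r(C^0)$ spaces developed in Sections \ref{sec:sobolevspaces} and \ref{sec:compfunc}, together with the spatial Sobolev embedding $H^{2-2\eps}(\Sig)\hookrightarrow C^0(\Sig)$, which is valid since $\dim\Sig = 2$ and $\eps$ is small. The small factor $t_0^{\hh-2\eps}$ in every estimate will come from embeddings in the time variable: thanks to the $t_0$-weighted normalization of $\Mod{\cdot}_{r,s}$, one has $\Mod{f}_{H^{r_2}(Y)} \leq C\, t_0^{r_1-r_2}\Mod{f}_{H^{r_1}(Y)}$ for $r_1 \geq r_2$ and any space-variable Banach space $Y$.

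For $Q_1$, only the $F$-variable enters, linearly. The first step is to establish an embedding of the form
$$\Mod{F}_{H^{-\hh+\eps}(C^0)} \leq c_K\, t_0^{\hh-2\eps}\Mod{F}_U,$$
obtained by interpolating at a parameter slightly larger than $\hh+\eps$ between the two $F$-spaces $H^{\hh+\eps}(H^{-2\eps})$ and $H^{-\hh+\eps}(H^{2-2\eps})$ in the $U$-norm to land in some $H^{r}(H^{s})$ with $s>1$ (which embeds into $H^{r}(C^0)$), and then contracting in time. Since the infinitesimal $G$-action is bounded pointwise on the compact manifold $X$ and $\Phi$ is smooth with bounded derivatives, this yields the claimed bounds on $JF_{u_0}$ in $H^{-\hh+\eps}(C^0)$ and on $u_0^*d\Phi(JF_{u_0})$ in $H^{-\hh+\eps,-2\eps}$.

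$Q_2$ is the genuinely nonlinear piece. The commutator terms $*[a,F]$, $[d_{A_0}^*a,F]$ and $*[a\wedge *d_{A_0}F]$ are quadratic in $(a,F)$ and are handled by Sobolev multiplication in space (the exponents are chosen so that the product rule from Section \ref{sec:sobolevspaces} applies) combined with the same time-embedding trick. For the nonlinear differences, the main one being
$$(\exp_{u_0}\xi)^*d\Phi(JF_{\exp_{u_0}\xi}) - u_0^*d\Phi(JF_{u_0}),$$
apply the mean value theorem along the geodesic $s\mapsto\exp_{u_0}(s\xi)$ in $X$ to rewrite the difference as an integral linear in $\xi$, with integrand a product of bounded tensors on $X$ (depending smoothly on $\exp_{u_0}(s\xi)$) with $F$. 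The composition results in Section \ref{sec:compfunc} then give a bound of the form $\Mod{\xi}_{H^{\hh+\eps}(C^0)}\Mod{F}_U$ in the appropriate output norm. The same idea handles the $\xi$-component of $Q_2$. Finally, $Q_3 = -*[a\wedge*[a,F]]$ is a twofold application of the commutator bound used for $Q_2$, yielding the cubic estimate.

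The derivative estimates are obtained by differentiating each expression explicitly; the resulting expressions have the same algebraic structure as the $Q_i$ themselves, so the same multiplication and composition bounds apply. The main obstacle, as I see it, is the nonlinear difference term appearing in $Q_2$: controlling it (and its derivative) requires that composition with $\exp_{u_0}\xi$, for $\xi\in H^{\hh+\eps}(C^0)$, maps the Sobolev space in which $F$ lives back into itself with Lipschitz dependence on $\xi$ measured in the $C^0$-in-space norm. This is exactly what the composition lemmas in Section \ref{sec:compfunc} deliver, and it is the technical reason for the non-standard choice of the mixed space $H^{\hh+\eps}(C^0)$ for the $\xi$-variable in the definition of $U(t_0)$.
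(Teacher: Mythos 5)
Your proof follows essentially the same route as the paper: multiplication theorems on the mixed spaces $H^{r,s}$ for the polynomial (bracket) terms, interpolation between the two $F$-norms plus the spatial embedding $H^{1+2\eps}\hookrightarrow C^0$, a time-contraction $H^{r_1}\hookrightarrow H^{r_2}$ for the $t_0^{\hh-2\eps}$ factor, and the composition estimates from Section \ref{sec:compfunc} (your mean-value reformulation along $s\mapsto\exp_{u_0}(s\xi)$ is exactly what underlies Corollary \ref{cor:compcor} and its Lipschitz bound $\Mod{d\Phi\circ X_{\exp_{u_0}\xi}-d\Phi\circ X_{u_0}}_{\hh+\eps,C^0}\leq c_K\Mod{\xi}_{\hh+\eps,C^0}$). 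One concrete slip: in the $Q_1$ step you say to interpolate at a parameter ``slightly larger than $\hh+\eps$'' between $H^{\hh+\eps}(H^{-2\eps})$ and $H^{-\hh+\eps}(H^{2-2\eps})$; with $\theta>\hh+\eps$ the spatial exponent $2-2\eps-2\theta$ drops below $1-4\eps<1$ and the embedding into $C^0(\Sig)$ fails. You want $\theta<\hh-\eps$, and specifically $\theta=\hh-2\eps$ gives $F\in H^{-\eps}(H^{1+2\eps})\hookrightarrow H^{-\eps}(C^0)$, after which the time-contraction to $H^{-\hh+\eps}(C^0)$ produces precisely the factor $t_0^{\hh-2\eps}$ — this matches the paper's choice. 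With that parameter corrected, your argument is sound.
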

\begin{proof}
  The terms $[a,F]$, $[{d}_{A_0}a,F]$, $[a,{d}_{A_0}F]$ and $[a,[a,F]]$
  are polynomials of $a$, $F$ and their derivatives. Consider $[a,F]$
  The term $a$ is in $H^{\hh+\eps,1-2\eps}$ and by interpolation, $F$
  is in $H^{0,1}$ (see Corollary \ref{cor:interpolmixed}). By the multiplication theorem
  \eqref{eq:multiplicationmixed}, $[a,F]$ is in $H^{-\eps,-2\eps}$
  which embeds into $H^{-\hh+\eps,-2\eps}$. The embedding has norm
  $c_\kappa T^{\hh-2\eps}$ \eqref{eq:includetime}. We have
  \begin{align*}
    \Mod{[a,F]}_{-\hh+\eps,-2\eps} &\leq c_\kappa T^{\hh-2\eps}\Mod{[a,F]}_{-\eps,-2\eps} \leq c_\kappa T^{\hh-2\eps}\Mod a_{\hh+\eps,1-2\eps} \Mod F_{0,1} \\
    &\leq c_\kappa T^{\hh-2\eps}\Mod x_U^2.
  \end{align*}
  That the constants depend only on $\kappa$ follows from Proposition
  \ref{prop:curvbound}. The other polynomial terms are bounded the
  same way. A bound on the derivatives for these terms is obvious :
  for example,
$$\Mod{d[a,F]}_{-\hh+\eps,-2\eps} \leq c_\kappa T^{\hh-2\eps}(\Mod{a}_{\hh+\eps,1-2\eps} + \Mod{F}_{0,1}) \leq c_\kappa T^{\hh-2\eps}\Mod x_U.$$

To discuss the other terms, which are not polynomial, we define an
operator: for any map $u \in C^0(\Sig,P(X))$, let
$X_u:\Gamma(\Sig,P(\mathfrak{k})) \to \Gamma(\Sig, u^*T^{\on{vert}}P(X)) $ be
given by $\xi \mapsto J\xi_u$. The terms $u_0^*{d}\Phi (JF_{u_0})$ and
$JF_{u_0}$ are obtained by the action of linear bundle maps
${d}\Phi_{u_0} \circ X_{u_0}$ and $X_{u_0}$ respectively on $F$. For
example, the first of these terms $u_0^*{d}\Phi F_{u_0}$ can be seen as
the tensor product of the sections ${d}\Phi_{u_0} \circ X_{u_0}$ and
$F$.  The first factor ${d}\Phi \circ X_{u_0}$ is in $L^2(\Sig,P(\End
\mathfrak{k}))$ and the norm is independent of $u_0$. Further, since it is time
independent ${d}\Phi \circ X_{u_0} \in H^{1,0}$.  As earlier $F \in
H^{0,1}$.  By the multiplication theorem
\eqref{eq:multiplicationmixed}, $u_0^*{d}\Phi F_{u_0} \in
H^{-\eps,-2\eps} \hra H^{-\hh +\eps,-2\eps}$. The operator norm picks
up a factor of $c_\kappa T^{\hh-2\eps}$ from the last inclusion.

The remaining two terms
$$((\exp_{u_0}\xi)^*{d}\Phi (JF_{\exp_{u_0}
  \xi})-u_0^*{d}\Phi (JF_{u_0})),(({d}\exp_{u_0}\xi)^{-1}
 (JF_{\exp_{u_0}\xi}) - JF_{u_0})$$ 
require Corollary
\ref{cor:comptbd}, which is a result on composition of functions in
the space $H^{\hh+\eps}(C^0)$. Section \ref{sec:compfunc} explains
this result in detail. Consider the first of these terms. The bundle
map $\xi \mapsto ({d}\Phi \circ X_{\exp_{u_0}\xi} - {d}\Phi \circ
X_{u_0})$ is continuous and by Corollary \ref{cor:comptbd}, it induces
a map 
$$H^{\hh+\eps}(C^0)(\Sig,P(\mathfrak{k})) \to H^{\hh+\eps}(C^0)(\Sig,P(\End
\mathfrak{k})).$$
So $({d}\Phi \circ X_{\exp_{u_0}\xi} - {d}\Phi \circ X_{u_0})\in
H^{\hh+\eps}(C^0)(P(\End \mathfrak{k}))$ and
$$\Mod{{d}\Phi \circ X_{\exp_{u_0}\xi} - {d}\Phi  \circ X_{u_0}}_{\hh+\eps,C^0} \leq c_\kappa\Mod \xi_{\hh+\eps,C^0},$$
where $c_\kappa$ is independent of $u_0$. By compactness of $\Sig$,
${d}\Phi \circ X_{\exp_{u_0}\xi} - {d}\Phi \circ X_{u_0}$ is in
$H^{\hh+\eps,0}$. Multiplying by $F \in H^{0,1}$, we get the result
$$((\exp_{u_0}\xi)^*{d}\Phi (JF_{\exp_{u_0} \xi})-u_0^*{d}\Phi
(JF_{u_0})) \in H^{-\eps,-2\eps} \hra H^{-\hh+\eps,-2\eps}$$ and
\begin{align*}
  &\quad\ \Mod{((\exp_{u_0}\xi)^*{d}\Phi (JF_{\exp_{u_0} \xi})-u_0^*{d}\Phi (JF_{u_0}))}_{-\hh+\eps,-2\eps} \\
  &\leq c_\kappa T^{\hh-2\eps}\Mod{((\exp_{u_0}\xi)^*{d}\Phi (JF_{\exp_{u_0} \xi})-u_0^*{d}\Phi (JF_{u_0}))}_{-\eps,-2\eps}\\
  &\leq  c_\kappa T^{\hh-2\eps}\Mod{{d}\Phi \circ X_{\exp_{u_0}\xi} - {d}\Phi  \circ X_{u_0}}_{\hh+\eps,C^0}\Mod{F}_{0,1}\\
  &\leq c_\kappa T^{\hh-2\eps}\Mod{\xi}_{\hh+\eps,C^0}\Mod{F}_{0,1}
  \leq c_\kappa T^{\hh-2\eps}\Mod{x}^2_U.
\end{align*}
Similarly, for the second term
$(({d}\exp_{u_0}\xi)^{-1}(JF_{\exp_{u_0}\xi}) - JF_{u_0})$,
$$\Mod{({d}\exp_{u_0} \xi)^{-1}\circ X_{\exp_{u_0}\xi}  - X_{u_0}}_{\hh+\eps,C^0} \leq c_\kappa \Mod \xi_{\hh+\eps,C^0}.$$
Applying interpolation (Corollary \ref{cor:interpolmixed}), followed by Sobolev embedding
\eqref{eq:embedspace}, $F \in H^{-\eps,1+2\eps} \hra
H^{-\eps}(C^0)$. By multiplication Theorem
\eqref{eq:c0mult}, $$(({d}\exp(\xi))^{-1}\circ X_{\exp_{u_0}\xi} -
X_{u_0})F \in H^{-\eps}(C^0) \hra H^{-\hh+\eps}(C^0).$$ Corollary
\ref{cor:comptbd} also gives differentiability and a bound on the
derivative for these terms.
\end{proof}

\subsubsection{Existence of flow in a uniform time interval $[0,t_0(\kappa)]$}
We now prove Proposition \ref{prop:flowexist}, which is the main
result of Section \ref{subsec:flowexist}.

\begin{proof}[Proof of Proposition \ref{prop:flowexist}] 
  The proof of the existence statement in Proposition
  \ref{prop:flowexist} is by an application of implicit function
  theorem (Proposition \ref{prop:impfnMS}) and is similar to R{\aa}de's
  proof in \cite{Rade}.  To show the existence of gradient flow in a
  time interval $[0,T]$, we need to solve
  $$L(a_2,F_2,\xi_2) = Q(M(a_0,F_0,\xi_0) + (a_2, F_2, \xi_2))$$ 
  for $(a_2,F_2,\xi_2) \in U_P(T)$. As earlier, we use the notation
  $x_i=(a_i,F_i,\xi_i)$ for $i=0, 1, 2$. Recall that
  $x_0=(a_0,F_0,\xi_0)$ is the initial data for the system
  \eqref{eq:evolve3} and $x_1:=Mx_0$ is an approximate solution of
  \eqref{eq:evolve3}.  In order for the map $Q$ to be well-defined, we
  need to ensure that the $L^\infty$ norm of $\xi=\xi_1+ \xi_2$ is
  less than the injectivity radius of $X$.  The construction of $M$
  gives $\xi_1(t)\equiv \xi_0=0$. We next define the various
  quantities in the hypothesis of Proposition \ref{prop:impfnMS}. Let
  $Y_1:=U_P(T)$, $Y_2:=W(T)$, $\S(T):=\{(a_2,F_2,\xi_2) \in U_P(T) :
  \Mod{\xi_2}_{C^0} < \inj_X\}$, $\F(x) := -Lx + (Q_1+Q_2+Q_3)(Mx_0+x)$
  and $C:=\linebreak \Mod{L^{-1}} = c_\kappa$. By the definition of $\S(T)$, $Q$ is
  well-defined on $\S(T)$. The set $\S(T)$ is indeed an open
  neighborhood of the origin in $U_P(T)$ because the map $\pi : U_P(T)
  \to \Gamma(\Sig, P(\mathfrak{k}))_{C^0}$ that takes $(a,F,\xi)$ to $\xi$ is
  continuous (by Sobolev embedding \eqref{eq:embedtime}). Choose a
  constant $\delta>0$ such that $B_\delta \subset \S(T)$. For $x_2 \in
  B_\delta$,
  \begin{align*}
\Mod{{d} \F(x_2)-{d} \F(0)}&=\Mod{{d} Q(Mx_0+x_2) - {d} Q(Mx_0)}\\
& \leq c_\kappa t_0^{\hh-2\eps}(1+ \Mod{Mx_0}^2+\Mod{Mx_0+x_2}^2)\\
&\leq c_\kappa t_0^{\hh-4\eps}(1+\Mod{x_0}^2).    
  \end{align*}
Proposition \ref{prop:impfnMS} can be applied if
\begin{itemize}
\item $\Mod{{d} Q(Mx_0\!+\!x_2) \!-\! {d} Q(Mx_0)}\!<\! 1/2C$, i.e. $T^{\hh-4\eps}(\Mod{x_0}i\!+\!1)^2
  \leq 1/c_\kappa$ and
\item $\Mod{F(0)} = \Mod{Q(Mx_0)}=c_\kappa T^{\hh
    -3\eps}(\Mod{x_0}+1) < \delta/4c_\kappa$.
\end{itemize}
Both these conditions can be met by a small enough value of $T$, that
is dependent only on $\kappa$. We call this value $t_0(\kappa)$ and it
proves the existence part of Proposition \ref{prop:flowexist}.

Next, we prove that there exists a solution with extra regularity $a
\in C^0(H^1)$, $F \in C^0(L^2)\cap L^2(H^1)$ and $\xi \in C^0$ as
required by the statement of Proposition \ref{prop:flowexist}.  First,
we look at $(a_1,F_1,\xi_1)$. By Remark \ref{rem:C0heat}, $F_1
\in C^0(L^2) \cap H^{0,1}$. Since $F_1$ satisfies $(\ddt +
\nabla_{A_0}^* \nabla_{A_0})F_1=0$, we get $F_1(t)-F_0=\nabla_{A_0}^*
\nabla_{A_0}\int_0F_1 \in C^0(L^2)$. By elliptic regularity (see
Proposition \ref{prop:laplaceinv}), $\int_0 F_1 \in C^0(H^2)$. So,
$a_1(t)=a_0+\int_0 {d}_{A_0}^* F_1 \in C^0(H^1)$. It can be checked
that Lemmas \ref{lem:Lbd}, \ref{lem:Mbd} and \ref{lem:Qbd} hold with
the following stronger spaces
\begin{align*}
  \tilde{U}(t_0)=\{& (a,F,\xi)|a \in H^{\hh+\eps, 1-2\eps}\cap H^{\hh,1}, F \in H^{\hh+\eps, -2\eps}\cap H^{-\hh, 2},\\
&  \xi \in H^{\hh+\eps}(C^0)\}\\
  \tilde U_P(t_0) =\{& (a,F,\xi)| a \in H_P^{\hh+\eps, 1-2\eps}\cap H_P^{\hh,1}, F \in H_P^{\hh+\eps, -2\eps}\cap H_P^{-\hh, 2},\\
&  \xi \in H_P^{\hh+\eps}(C^0) \}\\
  \tilde W(t_0) =\{& (a,F,\xi)| a \in H^{-\hh+\eps, 1-2\eps}\cap H_P^{-\hh,1}, F \in H^{-\hh+\eps, -2\eps}\cap H_P^{-\hh,0}, \\
  &  \xi \in H^{-\hh+\eps}(C^0) \}.
\end{align*}
So, there exists a solution of \eqref{eq:evolve3} in $\tilde
U(t_0)$. Using this, we can get improved estimates for the right hand
side of \eqref{eq:evolve3}. For example, $a \in H^{\hh,1} \implies
\nabla_{A_0}a \in H^{\hh,0}$. By interpolation $F \in H^{\frac{1}{4}
  -\eps, \frac{3}{2} +2\eps}$. By the multiplication theorem,
$[\nabla_{A_0}a,F] \in H^{-\frac{1}{4}-2\eps,0}$. Similarly we
estimate all terms in the right hand side of \eqref{eq:evolve3} to get
\begin{equation*}
  \left\{ \begin{aligned}
      \ddt a_2 + {d}_{A_0}^* F_2 \in H_P^{-\frac{1}{4} -2\eps,1}\\
      \ddt F_2 + \nabla_{A_0}^*\nabla_{A_0} F_2 \in H_P^{-\frac{1}{4} -2\eps,0}
    \end{aligned} \right.
\end{equation*}
By parabolic regularity (Lemma \ref{lem:heateqnbasef}), we get $F_2 \in H_P^{\frac{3}{4}
  -2\eps,0} \cap H_P^{-\frac{1}{4} -2\eps,2} \hra C^0(L^2)$. So,
$F=F_1+F_2 \in C^0(L^2)$. Also, since ${d}_{A_0}^*F_2 \in
H_P^{-\frac{1}{4} -2\eps,1}$, $a_2 \in\linebreak  H_P^{\frac{3}{4} -2\eps,1} \hra
C^0(H^1)$. Therefore, $a=a_1+a_2 \in C^0(H^1)$.

Finally, we prove there is a unique solution of \eqref{eq:evolve3} in
the time interval $[0,t_0]$ by contradiction. Suppose $x=(a,F,\xi)$
and $x'=(a',F',\xi')$ are two solutions to \eqref{eq:evolve3} for some
$t_0 > 0$ with the same initial data $(a_0,F_0,\xi_0)$, $a_0 \in L^2$,
$F_0 \in H^1$, $\xi_0 \in C^0$.
\begin{gather*}
a\!\in\! H^{\hh+\eps,1-2\eps} \!\cap\! C^0(H^1),\ F \!\in\! H^{\hh+\eps,-2\eps} \!\cap\! H^{-\hh+\eps,2-2\eps}\!\cap\! C^0(L^2),\ \xi \!\in\! H^{\hh\!+\!\eps}(C^0)\\
a'\in H^{\hh+\eps,1-2\eps},\ F' \in H^{\hh+\eps,-2\eps} \cap H^{-\hh+\eps,2-2\eps},\ \xi' \in H^{\hh+\eps}(C^0)
\end{gather*}
Assume $x \neq x'$. Let $t_1$ be the largest number such that the
restrictions of $x$ and $x'$ to $\Sig \times [0,t_1]$ are
identical. Since the solutions are in $C^0([0,t_1], H^1 \times L^2
\times C^0)$, $(a(t_1),F(t_1),\xi(t_1))$ is well-defined. Then,
$(a,F,\xi)$ and $(a',F',\xi')$ solve the initial value problem
\eqref{eq:evolve3} on $\Sig \times [t_1,t_0]$ with initial data
$(a(t_1),\linebreak F(t_1),\xi(t_1))$. Therefore, without loss of generality, we
may assume that $t_1=0$. We can split $x=x_1+x_2$, where $Lx_1=0$,
$x_1(0)=(a_0,F_0,\xi_0)$ and $x_2 \in U_P(t_0)$. Similarly
$x'=x_1'+x_2'$. Since $M$ is uniquely defined $x_1=x_1'$. So, now both
$x_2$ and $x_2'$ are solutions of $Lx=Q(x_1+x)$ in $U_P(t_0)$. By
Sobolev embedding, both $\xi_2, \xi_2' \in H_P^{\hh+\eps}(C^0) \hra
C^0_P([0,t_0],C^0)$. There exists $0<t < t_0$ such that
$\Mod{\xi_2'}_{C^0_P([0,t],C^0)} < \on{inj}_X$. So, the restrictions
of $x_2$ and $x_2'$ to $U_P(t)$ are in $\S(t)$. The set $\S(t) \subset U_P(t)$ is convex and so, by Lemma \ref{lem:inj_convex}, $x \mapsto -Lx+Q(x_1+x)$ is
injective on $\S(t)$. Therefore, $x_2=x_2'$ in $U_P(t)$ and this leads to a
contradiction.
\end{proof}

\subsection{Smooth flow modulo gauge}\label{subsec:smoothflow}
We recall from Section \ref{subsec:ghm} that gradient flow of the
vortex functional preserves the complex gauge orbit of the gauged
holomorphic map. Hence, there is a family of time-dependent complex
gauge transformations $g_t$ such that
$$(A_t,u_t)=g_t(A_0,u_0), \quad g_t \in \G, t \in [0,\infty).$$
Then the system of equations \eqref{eq:evolve} generating the gradient flow $(A_t,u_t)$ can be
written as a single equation in $g_t$:
\begin{equation}\label{eq:floweqng}
  \frac{dg_t}{dt}g_t^{-1}=-iF_{A_t,u_t}, \quad g_0=\Id.
\end{equation}
To write $F_{A_t,u_t}$ in terms of $g_t$, we need some
preliminaries. We follow Donaldson \cite{Do:ASD}.
\subsubsection{How curvature transforms under complex gauge
  transformations}\label{subsubsec:gcap}
The transformation relation is derived by working on an associated
vector bundle.  First we assume that $K=U(n)$, and define a complex
vector bundle $E:=P \times_K \C^n$. The standard Hermitian metric on
$\C^n$ is preserved by the $K$-action, and hence the vector bundle $E$
has a Hermitian metric. A connection $A$ on the principal bundle $P$
induces a unitary connection on $E$, which is also denoted by $A$.

The transformation relation of the curvature is obtained via a
corresponding relation on the covariant derivative, which in turn is
obtained by relations on the $(0,1)$ and $(1,0)$ parts of the
covariant derivative.  We recall from Section \ref{subsec:conn} that
there is a canonical isomorphism between $\A(P)$, the space of
connections and $\mC(P)$, the space of holomorphic structures on
$P_\C$. The action of the complexified gauge group on $\A(P)$ is
defined by pulling back the action of $\G(P)$ on $\mC(P)$ via that
isomorphism. Further, the holomorphic structure on $P_\C$ corresponding
to the Dolbeault operator $\delbar_A$ also induces a Dolbeault
operator on the vector bundle $E$. Therefore, on the space of sections
$\Gamma(\Sig,E)$,
\begin{align}\label{eq:gcactc}
  \delbar_{g(A)}&=g\circ\delbar_A\circ g^{-1}, \quad g \in \G.
\end{align}
The Hermitian metric on $E$ together with the Riemannian metric on
$\Sig$ give a metric on the spaces $\Om^k(\Sig,E)$. For any connection
$A$, let $(\delbar_A)^*$ denote the formal adjoint of $\delbar_A$
under this metric. It satisfies $\partial_A=*(\delbar_A)^**$. Applying
this identity to the the connection $g(A)$, we get
\begin{align}\label{eq:gcpartial}
  \partial_{g(A)}=(g^*)^{-1}\circ \partial_A \circ g^*, \quad \text{$A
    \in \A(P)$, $g \in \G$,}
\end{align}
where $g^*$ is the adjoint of $g$ under the metric fixed on
$\Gamma(\Sig,E)$. By viewing $K$ and $G$ as matrix groups and using
the fact that $k^*k=\Id$ for $k \in K$, we see that the element $g^*$
lies in $\G(P)$. The covariant derivative corresponding to the
connection $g(A)$ is ${d}_{g(A)}=\delbar_{g(A)}+\partial_{g(A)}$.  For
$g \in \G$, define $h(g):=g^*g \in \G(P)$.  Adding \eqref{eq:gcactc}
and \eqref{eq:gcpartial}, we get $g^{-1}\circ {d}_{g(A)} \circ
g=\delbar_A+h^{-1}\circ \partial_A \circ h$. On vector bundles, the
curvature $F_A$ is equal to $d_A^2$. It transforms as
\begin{align}\label{eq:hcurv}
  g^{-1}\circ F_{g(A)} \circ g&=F_A+\delbar_A(h^{-1}\partial_A
  h)\\
\nonumber &=F_A+h^{-1}(\delbar_A \partial_A h -
  (\delbar_Ah)h^{-1}\partial_Ah).
\end{align}
Since $K$ is isomorphic to the structure group of $E$, $F_{g(A)}$ is
also the curvature of the connection $g(A)$ on $P$.

For a general compact Lie group $K$, there is a $U(n)$ into which it
can be mapped injectively. So, we work on the bundle $P\times_K
\C^n$. We look upon the space of $K$-connections on $E$ as a subset of
$U(n)$-connections. The group action preserves $K$-connections,
because the infinitesimal action $-\delbar_A \xi$ is in
$\Omega^{0,1}(\g)$. All the relations above carry over to the general
case.
\subsubsection{Gauge-invariant version of the flow equations}
Using the transformation relation \eqref{eq:hcurv}, the evolution
equation \eqref{eq:floweqng} can be re-written as
\begin{equation}\label{eq:floweqnh}
  \frac{dh_t}{dt}=-2ig_t^*F_tg_t=-2ih_t(*F_{A_0}+*\delbar_0(h_t^{-1}(\partial_0h_t))+g_t^{-1}u_t^*\Phi
  g_t),
\end{equation}
where $h_t\!=\!g_t^*g_t$. For $0$-forms $\Delta_{A_0}\!=\!
{d}_{A_0}^*{d}_{A_0}\!=\!*\delbar_0 \partial_0$, and so the equation~\eqref{eq:floweqnh} can be modified to
\begin{align}\label{eq:heqn}
  \frac{dh_t}{dt} + \Delta_{A_0}h_t&= -2ih_t\{*F_{A_0}+*(\delbar_0 h_t) h_t^{-1}(\partial_0 h_t) + g_t^{-1}u_t^*\Phi g_t\}\\
  \nonumber h(0)&=\Id,\quad h|_{\partial \Sig}=\Id.
\end{align}
We make a few comments about \eqref{eq:heqn} to show that it is indeed
the {\em gauge-invariant version of the flow equation}. Firstly,
replacing $g_t$ by $k_tg_t$, $k_t \in \K$ does not alter
$h_t$. Secondly, the term $g_t^{-1}u_t^*\Phi g_t$ is $\K$-invariant ---
it is unchanged if $u_t$ and $g_t$ are replaced by $k_tu_t$ and
$k_tg_t$ respectively. Lastly, any time-dependent complex gauge
transformation $g_t'$ that satisfies $(g_t')^*g_t'=h_t$ differs from
$g_t$ by unitary gauge transformations, i.e. $g_t'g_t^{-1} \in \K$. In
particular, $h_t$ is pointwise positive and self-adjoint. So,
$g_t':=\sqrt{h_t} \in \G(P)$ is well-defined and satisfies
$(g_t')^*g_t'=h_t$.  Therefore, given the initial pair $(A_0,u_0)$,
$g_t^{-1}u_t^*\Phi g_t$ is a function of $h_t$ and is given by the
composition
$$h_t \mapsto g_t':=\sqrt{h_t} \mapsto (g_t')^{-1}((g_t'u_0)^*\Phi)g_t'.$$
A solution $h_t$ of the equation \eqref{eq:floweqnh} gives a solution
$\sqrt{h_t}(A_0,u_0)$ of the gradient flow equation modulo gauge.
\begin{proposition}\label{prop:smooth} Suppose the gauged map $(A_0,u_0)$
  is smooth, then the solution of \eqref{eq:heqn} $h_t:[0,\infty) \to \G$
  is smooth except at the corner $\{0\} \times \partial \Sig$. Hence the gradient flow $(A_t,u_t)$ computed in
  Corollary \ref{cor:heat_nonreg} is smooth modulo gauge away from the corner $\{0\} \times \partial \Sig$.
\end{proposition}
\begin{proof}[Proof of Proposition \ref{prop:smooth}]
   We use the following Banach
  space:
$$\mathcal{L}^{k,p}:=L^{p/2}([0,t_0], W^{k,p}(\Sig, P(G))) \cap W^{1,p/2}([0,t_0], L^p(\Sig, P(G))),$$
where $P(G)$ is the associated bundle $P \times_KG$ and $K$ acts on $G$ by conjugation. The groups $K$ and $G$ are viewed as matrix groups so that $P \times_KG$ is a subbundle of a vector bundle. The $W^{k,p}$-completion of the space of sections is defined as in \eqref{eq:wspnorm} below.
The proof of the Proposition is by a bootstrapping argument using the following two observations.

\begin{observation}\label{obs:1}
Let $s \geq 2$, $p>2$ and $\delta \in (0,\hh - \frac 1 p)$ be such that there
  is an integer in the interval $[\frac s 2 -1 +\delta , \frac s 2]$
  (and hence also in the interval $[s-2+2\delta,s]$). Suppose $h \in
  \mathcal{L}^{s,p}([0,T]\times \Sig)$, then the rhs of \eqref{eq:heqn} is in
  $\mathcal{L}^{s-2+2\delta,p}([0,T]\times \Sig)$. The reason is as follows. By
  the presence of an integer in the interval $[\frac s 2 -1 +\delta ,
  \frac s 2]$, the smooth term $g_t^{-1}u_t^*\Phi g_t$ is in
  $\mathcal{L}^{s-2+2\delta,p}$ by Corollary \ref{cor:bundsob_time}. Next
  consider the term $*(\delbar_0 h_t) h_t^{-1}(\partial_0 h_t)$. By
  interpolation, $h_t$ is in $W^{s/2-1+\delta,p}([0,T], W^{2-2\delta,p})$,
  therefore the derivatives $\partial_0 h_t$ and $\delbar_0 h_t$ are
  continuous (or above Sobolev borderline) and can be multiplied using
  Proposition \ref{prop:sobmult}. This implies $h_t \in
  W^{s/2-1+\delta,p}([0,T],L^p)$. The other part that $h_t$ is in
  $L^p([0,T],W^{s-2+2\delta,p})$ is similar and easier.  
\end{observation}
\begin{observation}\label{obs:2}
Suppose $\frac 1 p < k<s+2$, $h \in \mathcal{L}^{k,p}([0,T]\times
  \Sig)$ and the r.h.s. of the equation \eqref{eq:heqn} is in
  $\mathcal{L}^{s,p}([0,T]\times \Sig)$. Then for any $0<\alpha<T$, $h \in
  L^{s+2,p}([0,\alpha]\times \Sig)$ by parabolic regularity (Theorem,
  p.96, \cite{Ham:bdry}).
  \end{observation}

We first show that $h \in \mathcal{L}^{2,p}$ for some $p>3$.
From the proof of Proposition~\ref{prop:flowexist}, $F_{A_t,u_t} \in H^{-\hh+\eps}(H^{2-2\eps})$. Then, by Lemma
\ref{lem:flowong}, there is a solution of the equation
\eqref{eq:floweqng} $g_t \in H^{\hh+\eps}([0,t_0],\G(P)_{H^{2-2\eps}})$. Recall that $\eps \in (0,1/16)$ is a fixed number. By the
multiplication Theorem (see \eqref{eq:multiplicationmixed}),
\begin{equation}
  \label{eq:hreg}
  h_t:=g_t^*g_t \in H^{\hh+\eps}([0,t_0],H^{2-2\eps}(\Sig, P \times_K G))
\end{equation}
and $h_t$ is the solution of the gauge-invariant equation
\eqref{eq:heqn}. 
We first show that the r.h.s. of equation
\eqref{eq:heqn} is in $\mathcal{L}^{0,p}$. The term $g_t^{-1}u_t^*\Phi
g_t$, which is produced by the action of a smooth bundle map on $h_t$, is in $\mathcal{L}^{0,p}$ using
Corollary \ref{cor:bundsob}. By Sobolev multiplication $*(\delbar_0
h_t) h_t^{-1}(\partial_0 h_t)$ is in $H^{\hh+\eps ,1-4\eps}$, and hence it is in $\mathcal{L}^{0,p}$ by Sobolev embedding. Further, by Sobolev embedding, $h_t$ is in $\mathcal{L}^{s,p}$ for some $p>3$ and $s \in (\frac 1 p, \frac 2 p)$. Now Observation \ref{obs:2} from above is applicable, and we have $h \in \mathcal{L}^{2,p}$.

We use induction to show that the solution $h$ is smooth away from the
corner $\{0\} \times \partial \Sig$.  Suppose $k \geq 2$ is an integer
multiple of $\frac 2 7$. If $h \in \mathcal{L}^{k,p}([\alpha,T]\times \Sig)$,
where $p>3$, then the the r.h.s. of \eqref{eq:heqn} is in $h \in\linebreak
\mathcal{L}^{k-12/7,p}([0,T]\times \Sig)$ and by parabolic bootstrapping $h$ is
in $\mathcal{L}^{k+2/7,p}([\alpha',T]\times \Sig)$ for any
$\alpha<\alpha'<T$. From Step 1, we know that $h \in
\mathcal{L}^{2,p}$. Therefore by induction $h$ is smooth on $(0,\infty) \times
\Sig$. Smoothness on $\{0\} \times \interior(\Sig)$ can be shown as in
Hamilton's proof (Theorem, p.119 \cite{Ham:bdry}).
\end{proof}

\begin{remark} For the solution of the heat equation to be smooth at the corner $\{0\} \times \Sig$, an infinite number of compatibility conditions have to be satisfied at that corner (see Remark, p.365  in Evans \cite{Evans}). In our case, without any condition $h$ is continuous at the corner. If $F_{A_0,u_0}|_{\partial \Sig}=0$, then $h$ would be differentiable. For higher differentiability, we would require an appropriate number of normal derivatives of $F_{A_0,u_0}$ to vanish at the boundary.
\end{remark}
The following Lemma is used in the proof of Proposition
\ref{prop:smooth}.
\begin{lemma}\label{lem:flowong} Suppose $s>1$, $r \in (-\hh,\hh)$, $T>0$ and $F_t \in
  H^r([0,T],\linebreak H^s(\Sig,P(\mathfrak{k})))$ be a time-dependent section. Then,
  $g_t$ defined by \eqref{eq:floweqng} is in $H^{r+1}([0,T],
  \G(P)_{H^s})$.
\end{lemma}
\begin{proof} 

  We first prove the result when $F$ is small. In particular, we show
  that there is a constant $\delta$ such that if
  $\Mod{F}_{H^r(H^s)}<\delta$, then $g \in H^{r+1}(H^s)$. We view $K$ and
  $G$ as matrix groups, so that $g_t$ can be viewed as a section of a
  vector bundle.  
The proof is by an implicit function theorem
  argument on the operator
  \begin{equation}
    \label{eq:diffeq}
    \F:H^{r+1}_P([0,T],H^s(\G(P))) \to H^r([0,T],H^s(\Sig,P(\g))), \quad g \mapsto \frac {dg} {dt} g^{-1}.
  \end{equation}
  As in Section \ref{subsec:flowexist}, $H^{r+1}_P$ is the subspace of
  $H^{r+1}$ that consists of sections vanishing at $t=0$. Both
  differentiation and multiplication are smooth operations between
  appropriate Sobolev spaces, so \eqref{eq:diffeq} is differentiable.
  Its linearization at $g=g_0$ is
  \begin{align*}
    D\F_{g_0}: H^{r+1}_P([0,T],H^s(\Sig,P(\g))) &\to H^r([0,T],H^s(\Sig,P(\g))).\\
    \xi &\mapsto [\xi',\frac {dg_0}{dt} g_0^{-1}] + \frac {d\xi'}{dt}, \quad \text{where }\xi'=\xi g_0^{-1}.
  \end{align*}
  The time differentiation operator $\ddt : H^{r+1}_P \to H^r$ is
  invertible, whose inverse is the integral $\int_0$ by Lemma
  \ref{lem:integrationtime}. Suppose the norm of the integral
  operator is bounded by a constant $C$. So, we have $D\F_{\Id}$ is
  invertible and has inverse bounded by $C$. There is a constant
  $\delta>0$ such that if $\Mod{g_0-\Id}_{H^{r+1}(H^s)}<4C\delta$, then,
  \begin{align*}
    \Mod{D\F_{g_0}(\xi) - D\F_{\Id}(\xi)}_{H^r(H^s)} &\leq \Mod{\xi \frac {d(g_0^{-1})} {dt} + [(\xi g_0^{-1}), \frac {dg_0} {dt} g_0^{-1}]}_{H^r(H^s)}\\
    & \leq \frac 1 {2C}\Mod{\xi}_{H^{r+1}(H^s)}.
  \end{align*}
  Then, by the implicit function theorem (Proposition \ref{prop:impfnMS}) if
  $\Mod{F}_{H^r(H^s)}<\delta$, there is a unique $g$ such that $g(0)=\Id$, $\Mod{g-\Id}_{H^{r+1}(H^s)}<4C\delta$ and $\F(g)=iF$. By using the
  norm of time-dependent sections as in Definition \ref{def:normsum},
  the norms of the differentiation, integration and multiplication
  operators are independent of the length $T$ of the time interval,
  and therefore the constants $\delta$ and $C$ are independent of $T$.

  Finally, we prove the result for any $F \in H^r(H^s(\Sig,\g))$.  The
  interval $[0,T]$ can be split up into a finite number of
  sub-intervals $0=t_0\leq t_1 \leq \dots \leq t_n=T$, on each of
  which the norm of $\Mod{F}_{H^r(H^s)}$ is less than $\delta$. On
  each sub-interval, we can find $g_i\in H^{r+1}_P([t_{i-1},t_i],H^s)$
  such that $\frac {dg_i} {dt} g_i^{-1}=(iF)|_{[t_{i-1},t_i]}$ and
  $g_i(0)=\Id$. Then, we define
  $g|_{[t_{i-1},t_i]}=g_ig_{i-1}(t_{i-1})\cdots g_1(t_1)$.
\end{proof}

\begin{proof}[Proof of Theorem \ref{thm:flowintro}]
The existence of the flow line $(A_t,u_t)$ is proved by Corollary \ref{cor:heat_nonreg}. Regularity modulo gauge for this flow line is proved by Proposition \ref{prop:smooth}.
\end{proof}

\section{Convergence}\label{sec:conv}
In this section, we prove all the results regarding convergence of
heat flow trajectories. Theorem \ref{thm:convintro} is proved in Section \ref{subsec:convmodgauge}, Theorems \ref{thm:bdryintro} and \ref{thm:closedintro} are proved in
Section \ref{subsec:uniquelimit} .

\subsection{Some results about gauge transformations}
We recall from Section \ref{sec:prelim} that complex gauge
transformations act on the space of connections and holomorphic
maps. If these spaces have a Banach manifold structure, the actions
are smooth. Suppose $P$ is a principal bundle. In order to define
Sobolev norms, we fix a smooth connection $A \in \A(P)$ for the rest
of Section \ref{sec:conv}. Given a representation $K \hra SO(n)$, we
recall that a completion of the space of sections $\Gamma(\Sig,P \times_K \R^n)$ can be defined under the $W^{s,p}$-norm:
\begin{equation}
  \label{eq:wspnorm}
  \Mod{\sig}_{W^{s,p}}^p:=\sum_{i=0}^s\Mod{\nabla^i_{A}\sig}^p_{L^p}, \quad \sig \in \Gamma(\Sig,P \times_K \R^n).
\end{equation}
Different choices of the connection $A$ would produce equivalent
norms. For $k \in \Z_{\geq 0}$ and $p>1$ the space of
$W^{k,p}$-connections, called $\A^{k,p}(P)$, is the affine space $A_0
+ \Om^1(\Sig, P(\mathfrak{k}))_{W^{k,p}}$, where $A_0$ is any smooth
connection. For spaces above Sobolev borderline, that is, if $kp>2$, the
Sobolev spaces of maps $\Gamma(\Sig,P(X))_{W^{k,p}}$ and unitary
(resp. complex) gauge transformations $\K^{k,p}(P)$
(resp. $\G^{k,p}(P)$) are Banach manifolds. These are modelled on the
Banach spaces $\Gamma(\Sig,u^*T^{\on{vert}}P(X))_{W^{k,p}}$ and
$\Gamma(\Sig,P(\mathfrak{k}))_{W^{k,p}}$ (resp. $\Gamma(\Sig,P(\g))_{W^{k,p}}$).
See, for example, Appendix B in Wehrheim's book \cite{Weh:Uh} for
details. The action of $\K^{k,p}$ on $\A^{k-1,p}$ is smooth. The
following Lemma from \cite{VW:affine} says that the same can also be
said about the action of $\G^{k,p}$ on $\A^{k-1,p}$, along with some
uniform bounds for the action.

\begin{lemma}{\rm (Action of $\G$ on $\A$, \cite[Lemma
    6.4]{VW:affine})} \label{lem:gcactona} Let $\Sig$ be a
  compact Riemann surface with metric, possibly with a smooth boundary
  and $P$ be a principal $K$-bundle. Suppose $k \in \Z_{\geq 0}$ and
  $p>1$ satisfy $kp>2$. Complex gauge transformations in $\G^{k,p}(P)$
  act smoothly on the space of connections $\A^{k-1,p}(P)$.

  Let $A_0$ be a smooth connection on $P$. For any $\eps>0$, there is
  a constant $C$ so that the following is satisfied. For any
  $W^{k-1,p}$ connection $A=A_0+a$ which satisfies
  $\Mod{a}_{W^{k-1,p}(\Sig)}<\eps$ and any $\xi \in W^{k,p}(\Sig,
  P(\mathfrak{k}))$ that satisfies $\Mod{\xi}_{W^{k,p}}<1$,
  \begin{equation}
    \label{eq:gcactbd}
    \Mod{(\exp i\xi)A - A}_{W^{k-1,p}(\Sig)} \leq C\Mod{\xi}_{W^{k,p}(\Sig)}.
  \end{equation}
\end{lemma}

The next Lemma, which is a standard result, says that the action of
complex gauge transformations $\G^{k,p}$ is smooth on the space of
sections $\Gamma(\Sig,P(X))_{k,p}$.
\begin{lemma}\label{lem:gcactonmap} Suppose $k \in \Z_{\geq 0}$ and
  $p>1$ satisfy $kp>2$. The action of complex gauge
  transformations $\G^{k,p}$ is smooth on the space of sections\linebreak
  $\Gamma(\Sig,P(X))_{k,p}$.
\end{lemma}
\begin{proof} 
  For a smooth complex gauge transformation $g_0$ and a small
  enough constant $\delta_1$, $\{e^{i\xi}g_0: \xi \in
  W^{k,p}(\Sig,P(\g)), \Mod{\xi}_{k,p}<\delta_1\}$ is an open set in
  $\G^{k,p}$ and $e^{i\xi}g_0 \mapsto \xi$ is a chart of the Banach
  manifold.  Similarly, for a smooth map $u_0:U \to X$ and a small
  constant $\delta_2$,
$$\{\exp_{u_0}\zeta:\Mod{\zeta}_{k,p}<\delta_2\} \mapsto \zeta \in \Gamma(\Sig, u^*T^{\on{vert}}P(X))_{W^{k,p}}$$
is a chart of the manifold $\Gamma(\Sig,P(X))_{k,p}$. The map
\begin{align}
  \label{eq:actsec}
&\quad\  \Gamma(\Sig,P(\g)) \times \Gamma(\Sig,u_0^*T^{\on{vert}}P(X)))\\
\notag &\ni (\xi,\zeta) \mapsto \zeta_1 \in \Gamma(\Sig,(g_0u_0)^*T^{\on{vert}}P(X))),
\end{align} 
where $e^\xi g_0 \exp_{u_0}\zeta = \exp_{g_0 u_0}\zeta_1$, is a smooth
map of sections, that vanishes on the zero section. Therefore, by
Corollary \ref{cor:bundsob} below, \eqref{eq:actsec} extends to a smooth map
between $W^{k,p}$-sections.  This proves that the map $\G^{k,p} \times
\Gamma(\Sig,P(X))_{k,p} \ni (g,u) \mapsto gu$ is smooth in the
neighborhood of any smooth pair $(g_0,u_0)$. Since smooth elements
form a dense subset of $W^{k,p}$ elements and the constants $\delta_1$, $\delta_2$ can be chosen uniformly, the result is proved.
\end{proof}

The next two results regarding complex gauge transformations are used
in the proof of Proposition \ref{prop:locconv} and Lemma \ref{lem:ubd_bdry}.
\begin{lemma}\label{lem:YMbdry}{\rm (Transforming to flat connections,
    Donaldson \cite[Theorem 1]{Do:bdry})}
  Let $\Sig$ be a compact Riemann surface with metric and with a non-empty boundary. Let $k \in \Z_{\geq 0}$ and
  $p>1$ be such that $(k+1)p>2$.
Let $A$
  be a $W^{k,p}$-connection on the trivial bundle $\Sig \times K$. There
  is a unique $s \in W^{k+1,p}(\Sig,\mathfrak{k})$ satisfying $s|_{\partial \Sig}
  \equiv 0$ such that $e^{is}A$ is a flat connection.
\end{lemma}

\begin{lemma}
  \label{lem:toflat} {\rm (Transforming to flat connections with small
    complex gauge transformations, Lemma 4.3 and Remark 4.4 in
    \cite{VW:affine})} Let $\Sig$, $k$ and $p$ be as in Lemma \ref{lem:YMbdry}. Let $P:=\Sig \times K$ be the
  trivial principal $K$-bundle on
  $\Sig$. 
  There are constants $c_1$, $c_2$ and $c_2'$ so that the following
  holds. Let $A={d} +a$ be a connection on $P$ so that $a \in
  \Om^1(\Sig,\mathfrak{k})_{W^{k,p}}$. If $\Mod{a}_{W^{k,p}(\Sig)}<c_1$, there
  is a unique $\xi \in W^{k+1,p}(\Sig,\mathfrak{k})$ satisfying $\xi|_{\partial
    \Sig}=0$, $F_{e^{i\xi}A}=0$ and $\Mod{\xi}_{W^{k+1,p}} \leq
  c_2\Mod{F_A}_{W^{k-1,p}} \leq c_2'\Mod{a}_{W^{k,p}}$.
  
  Further, on any contractible closed set $\Sig' \subset \on{int}
  \Sig$, there is a gauge transformation $k \in W^{k+1,p}(\Sig',K)$ so
  that $ke^{i\xi}A={d}$ on $\Sig'$.  The gauge transformation $k$ is
  unique up to left multiplication by a constant element in $K$.
\end{lemma}

The following result, which is used in Section \ref{subsec:uniquelimit}, 
shows that the convergence of a sequence of gauge equivalent connections
implies convergence of the corresponding gauge transformations.
\begin{lemma}\label{lem:hausquot} Let $P \to \Sig$ be a principal
  $K$-bundle over a compact Riemann surface with metric and let
  $p>2$. Suppose $\{A_i\}_i$ is a sequence of connections converging to
  $A_\infty$ in $W^{1,p}$. Further, assume that the connections lie in
  the same gauge orbit, i.e. there exists a sequence of gauge transformations $k_i
  \in \K^{2,p}(P)$ satisfying $k_i(A_0)=A_i$.  Then, the sequence $\{k_i\}_i$
  is bounded in $W^{2,p}$ and has a weak limit $k_\infty \in
  W^{2,p}(\K)$. Further, $A_\infty=k_\infty(A_0)$ and so, it is in the
  same gauge orbit as the sequence.
\end{lemma}
\begin{proof} Denote $\theta_i:=k_i(A_0)-A_\infty$. We are given
  $\theta_i \to 0$ in $\Om^1(\Sig,P(\mathfrak{k}))_{W^{1,p}}$ as $i \to \infty$. By embedding $K$ in $U(n)$ for some $n$, we view $k_i$ as sections of vector bundles whose fibers are $n \times n$-matrices.
The relation $A_\infty + \theta_i=k_i(A_0)=(dk_i)k_i^{-1} + k_iA_0k_i^{-1}$ can be re-written as 
\begin{align}\label{eq:gtb}
  dk_i=-k_iA_0+A_\infty k_i +\theta_ik_i.
\end{align}
The terms $A_0$, $A_\infty$ and $\theta_i$ are bounded in $L^p$. Since $K$ is compact, $\Mod{k_i}_{L^\infty}$ is bounded and so, the right hand side is bounded in
$L^p$. This implies that $\Mod{k_i}_{W^{1,p}}$ is uniformly bounded. Next, we show that the right hand-side of~\eqref{eq:gtb} is
bounded in $W^{1,p}$. By Sobolev multiplication (Proposition \ref{prop:sobmult}),
\begin{align*}
  \Mod{\nabla(\theta_ik_i)}_{L^p}\leq
  \Mod{\theta_i}_{W^{1,p}}\Mod{\nabla k_i}_{L^p}+\Mod{\nabla
    \theta_i}_{L^p}\Mod{k_i}_{W^{1,p}}.
\end{align*}
Hence,
$\Mod{k_i\theta_i}_{W^{1,p}}$ is uniformly bounded. Similarly $k_iA_0$
and $A_\infty k_i$ are also uniformly bounded in $W^{1,p}$ and therefore by \eqref{eq:gtb},
$\Mod{k_i}_{W^{2,p}}$ is uniformly bounded. After passing to a
subsequence, $k_i$ weakly converges to a limit $k_\infty$ in $W^{2,p}$ and the convergence
is strong in $C^1$. This implies $k_i(A_0) \to k_\infty(A_0)$ in $L^p$ and
so, $k_\infty(A_0)=A_\infty$.
\end{proof}

\subsection{Convergence of a subsequence modulo
  gauge}\label{subsec:convmodgauge}
The convergence behavior of gauged holomorphic maps is similar to the Gromov convergence of $J$-holomorphic maps, and relies on a uniform energy bound. 
The {\it energy of a gauged map $(A,u)$} on a Riemann
surface $\Sig$ is defined as
$$E(A,u):= \hh\int_\Sig |F(A)|^2+|\Phi \circ u|^2 + |d_A u|^2 \dvol_\Sig.$$ 
\begin{lemma}{\rm(An energy identity, \cite[Theorem 3.1]{CGS})}\label{lem:vortenergy}
  Let $\Sig$ be a compact Riemann surface and $P$ a principal
  $K$-bundle on it. A pair $(A,u) \in \A(P) \times \Gamma(\Sig,P(X))$
  satisfies
  \begin{equation}\label{eq:vortenergy}
      E(A,u)=\int_\Sig |\delbar_A u|^2 +\hh|*F_A + \Phi(u)|^2 \dvol_\Sig +
      \lan \om_X-\Phi,u\ran,
  \end{equation}
  where $\lan \om_X-\Phi,u\ran = \int_\Sig u^*\om - d\lan
  \Phi(u),A\ran.$
\end{lemma}
The last term $\lan \om_X-\Phi,u\ran$ in \eqref{eq:vortenergy} denotes
the pairing of equivariant cohomology and homology. For a closed
$\Sig$, the quantity is an invariant of the homotopy class of $(A,u)$
(see \cite{CGS}). Therefore the quantity is also independent of the
choice of $A$. This topological term is well-defined because $u^*\om -
d\lan \Phi(u),A\ran \in \Om^2(P,\mathfrak{k})$ is equivariant and horizontal, so
it descends to a 2-form on $\Sig$.

\begin{proposition}\label{prop:endec} {\rm(Energy decreases along gradient flow line)} Suppose\linebreak $(A_t,u_t)$ is the
  smooth gradient flow modulo gauge computed by Theorem~\ref{thm:flowintro}. In addition, if the base manifold $\Sig$ has
  boundary, we assume $u_0(\partial \Sig) \subset \Phi^{-1}(0)$. Then,
  the energy $E(A_t,u_t)$ decreases with time $t$.
\end{proposition}
\begin{proof} The Proposition is proved using the energy identity
  \eqref{eq:vortenergy}. The energy of a holomorphic pair $(A,u)$ is
  the sum of the vortex functional and the term $\int_\Sig u^*\om -
  d\lan \Phi(u),A\ran$. The vortex functional decreases along the flow
  line, so the result is proved by showing that the latter term is
  constant along the flow line. For a closed $\Sig$, the quantity is
  an invariant of the homotopy class of $(A,u)$, and is therefore
  constant (see \cite{CGS}). If $\Sig$ has boundary, we fix a 
  trivialization of the bundle $P \to \Sig$. Under this
  trivialization, the family of maps is $u_t:\Sig \to X$, and there is
  a family of complex gauge transformations $g_t:\Sig \to G$ such that
  $g_tu_0=u_t$ and $g_t|_{\partial \Sig}=\Id$. Therefore,
  $u_t|_{\partial \Sig}$ is time-dependent and so, $\int_\Sig
  u_t^*\om$ is time independent. The remaining part of the term
  $\int_\Sig d\lan \Phi(u),A\ran$, which is equal to $\int_{\partial
    \Sig} \lan \Phi(u),A\ran$ vanishes, because $\Phi(u_t)|_{\partial
    \Sig}=0$ for all $t$.
\end{proof}

The convergence result, Theorem \ref{thm:convintro} \eqref{part:convb}, is proved using
the following local result on convergence of gauged
holomorphic maps away from the bubbling set.

\begin{definition}\label{def:bubset}{\rm(Bubbling set)} Suppose
  $(A_i,u_i)$ is a sequence of gauged holomorphic maps defined on a
  Riemann surface $\Sig$. A point $z \in \interior \Sig$ is in the bubbling set of the sequence $(A_i,u_i)$
  if and only if there is a sequence of points $z_i$ in $\Sig$
  converging to $z$ which satisfy $|d_{A_i}u_i(z_i)| \to \infty$ as $i
  \to \infty$.
\end{definition}

\begin{proposition}\label{prop:locconv}
\hspace{-1ex}{\rm(A local convergence result for gauged holomorphic maps)}
  Suppose $U$ is a contractible compact connected Riemann surface with
  metric and $P:=U \times K$ is the trivial principal
  $K$-bundle. Suppose $(A_i,u_i) \in H^2 \times H^3$ is a sequence of
  gauged holomorphic maps with a uniform energy bound $E(A_i,u_i) \leq
  k$. Further, we are given that $A_i$ converges weakly to a limit
  connection $A_\infty$ in $H^2(U)$. Then, there is a subsequence of $(A_i,u_i)$ (also denoted by $(A_i,u_i)$), for which the bubbling set $Z \subset \interior(U)$ is finite and there is a limit map $u_\infty:\interior(U) \to X$ that
  satisfies $\delbar_{A_\infty}u_\infty=0$ and such that $u_i$
  converges to $u_\infty$ weakly in $H^3(S)$ for any compact subset $S
  \subset U \bs (Z \cup \partial U)$.
\end{proposition}
\begin{proof}[Proof of Proposition \ref{prop:locconv}] 
  First, by a sequence of converging complex gauge transformations, we
  transform the connections $A_i$ to trivial connections.  We observe
  that after passing to a subsequence, the weak convergence $A_i
  \weakto A_\infty$ in $H^2(U)$ implies strong convergence in the norm
  $W^{1,p}(U)$ for any $p>2$.  By Donaldson's result (Lemma
  \ref{lem:YMbdry}), there is a complex gauge transformation $e^{is}$
  on $U$, where $s \in W^{2,p}(U,\mathfrak{k})$, $s|_{\partial U}=0$ and
  $F_{e^{is}A_\infty}=0$. Since the action of complex gauge
  transformations on the space of connections is smooth (Lemma
  \ref{lem:gcactona}), the sequence $e^{is}A_i$ converges to
  $e^{is}A_\infty$ strongly in $W^{1,p}(U)$. Since the curvature map
  $A \mapsto F_A$ is a continuous map between the spaces $W^{1,p} \to
  L^p$, the sequence of curvatures $F_{e^{is}A_i}$ converges to zero
  in $L^p(U)$. Next, by Lemma \ref{lem:toflat}, there exists a
  sequence $\xi_i$ that converges to zero in $W^{2,p}(U)$ that
  satisfies $\xi_i|_{\partial U}=0$ and such that
  $F_{e^{i\xi_i}e^{is}A_i}=0$. Using the convergence $\xi_i \to 0$ in
  $W^{2,p}(U)$, we continue to have $e^{i\xi_i}e^{is}A_i \to
  e^{is}A_\infty$ in $W^{1,p}(U)$. Fix a point $p \in U$. Again using
  Lemma \ref{lem:toflat}, there is a unique sequence of unitary gauge
  transformations $k_i \in W^{2,p}(\Sig,K)$, for all $i$ including
  $i=\infty$, such that $k_ie^{i\xi_i}e^{is}A_i$ is the trivial
  connection and $k_i(p)=\Id$. By a bootstrapping procedure using the
  transformation relation \eqref{eq:gatran}, we can see that $k_i$
  converges to $k_\infty$ in $W^{2,p}(U)$. We denote the cumulative
  complex gauge transformations as $g_i:=k_ie^{i\xi_i}e^{is}$ and
  $g_\infty:=k_\infty e^{is}$. The sequence $g_i$ converges to
  $g_\infty$ in $W^{2,p}(U)$.

The transformed gauged holomorphic maps are just $J$-holomorphic maps to the target on which Gromov convergence results apply.
Since the connections $g_iA_i$ are
  trivial, the holomorphicity condition reduces to
  $\delbar(g_iu_i)=0$, i.e. $u_i:U \to X$ are holomorphic maps. By
  elliptic regularity, $g_iu_i$ is smooth on $\interior(U)$.  Energy of
  gauged holomorphic maps also transforms continuously under $W^{2,p}$
  complex gauge transformations. Therefore we have a uniform bound on
  $E(g_iA_i,g_iu_i)$, which implies a uniform bound on the quantity
  $\Mod{du_i}^2_{L^2(U)}$, which is the energy of a holomorphic map as
  in McDuff-Salamon \cite{MS}. The convergence modulo bubbling for
  holomorphic maps \cite[Lemma 4.6.1]{MS} is now
  applicable. Therefore, there is a subsequence of $g_iu_i$, a finite
  set $Z' \subset \interior (U)$ and a limit map $u_\infty'$ such that
  \begin{enumerate}
  \item a point $z \in Z'$ if and only if there is a sequence $z_i \to
    z$ such that $|{d} (g_iu_i)(z_i)| \to \infty$ as $i \to \infty$.
  \item On any compact subset $S \subset U \bs (Z' \cup \partial U)$,
    the sequence $g_iu_i$ converges smoothly to $u_\infty'$.
  \end{enumerate}

  Finally, we reverse the complex gauge transformations $g_i$. By the
  continuous action of complex gauge transformations on holomorphic
  maps, we can conclude that the bubbling set $Z$ of the sequence $(A_i,u_i)$ coincides with $Z'$ and that the conclusions of the
  Lemma hold with $u_\infty:=g_\infty^{-1}u_\infty'$. We explain in
  more detail why $Z$ is equal to $Z'$. The connections $A_i$ are
  bounded in $H^2$ and hence in $C^0$, by the Sobolev embedding
  $H^2(U) \hra C^0(U)$. By the compactness of the target space, the
  term $(A_i)_{u_i}$ is uniformly bounded in $C^0$. Then, by the
  relation ${d}_{A_i}u_i={d} u_i + (A_i)_{u_i}$, we can say that for any
  sequence of points $z_i$ in $U$, the sequence $|{d} u_i(z_i)|$ is
  unbounded exactly when $|{d}_{A_i} u_i(z_i)|$ is unbounded. Next, we
  have the relation
$$d(g_i u_i)=({d} g_i g_i^{-1})_{u_i}+ g_i {d} u_i.$$
The first term in the r.h.s. is uniformly bounded in $C^0$ because of
a bound on $\Mod{g_i}_{W^{2,p}}$ and by compactness of the target
$X$. The uniform $C^0$ bound on $g_i$ also implies that at any point
$z \in U$, the point-wise norms $|g_i {d} u_i(z)|$ and $|{d} u_i(z)|$
differ by a multiplicative factor that is uniformly bounded above and
below. This proves that $Z=Z'$.
\end{proof}

We now prove parts \eqref{part:conva}, \eqref{part:convb} and \eqref{part:convd} of Theorem
\ref{thm:convintro}. Part \eqref{part:convc} is proved along with Theorem
\ref{thm:bdryintro} in Section \ref{subsec:uniquelimit}.

\begin{proof}[Proof of Theorem \ref{thm:convintro} \eqref{part:conva}, \eqref{part:convb}, \eqref{part:convd}]
  Part \eqref{part:conva} is a convergence result for
  connections which is proved using Uhlenbeck compactness theorem and an energy bound on the heat flow.  Let $(\tilde A_t, \tilde u_t) \in
  C^0_{loc}([0,\infty),H^1 \times C^0)$ be the solution of the
  gradient flow equation \eqref{eq:evolve}. Further, let $(A_t,u_t)$
  be the smooth solution of the flow equation modulo gauge. Recall
  from the proof of Theorem \ref{thm:flowintro} that $(A_t,u_t)$ is
  related to $(\tilde A_t, \tilde u_t)$ by a family of $H^2$ gauge
  transformations. Let $\tilde F_t=*F(\tilde A_t)+\Phi(\tilde u_t)$.
  By the heat flow equation \eqref{eq:evolve2},
  \begin{align*}
    \ddt \Mod{\tilde F_t}^2_{L^2}=\int_\Sig\langle \tilde F_t,
    {d}_{\tilde A_t}^*{d}_{\tilde A_t}\tilde F_t + \tilde
    u_t^*{d}\Phi(J\tilde F_t)_{\tilde u_t}\ran \dvol_\Sig.
  \end{align*}
  The quantities involved are gauge-invariant, so we can write
  \begin{align*}
    \ddt \Mod{F_t}^2_{L^2}&=\int_\Sig\langle  F_t, {d}_{ A_t}^*{d}_{ A_t} F_t +  u_t^*{d}\Phi(J F_t)_{ u_t}\ran \dvol_\Sig\\
    &=\Mod{{d}_{ A(t)} F_t}^2_{L^2}+\int_\Sig g_X((F_t)_{ u_t},(
    F_t)_{u_t})\dvol_\Sig.
  \end{align*}
  In the above calculation, the boundary term $\int_{\partial \Sig}
  \lan F_t,d_{A_t}F_t\ran$ vanishes. The quantity $\Mod{ F_t}_{L^2}$
  is positive and decreasing. So, one can choose a sequence $\{t_i\}_i$
  ($t_i \to \infty$ as $i \to \infty$), such that
  \begin{equation}
    \label{eq:decay}
    \Mod{{d}_{A_{t_i}}F_{t_i}}_{L^2}, \Mod{(F_{t_i})_{u_{t_i}}}_{L^2} \to 0 \text{ as }i \to \infty.    
  \end{equation}
  We replace the subscripts $t_i$ by $i$. By Proposition
  \ref{prop:endec}, the energy of the sequence $E(A_i,u_i)$ is
  bounded, which implies $\Mod{{d}_{A_i}u_i}_{L^2}<c$ for all $i$. The
  $L^2$-bound on $d_{A_i}F_i$ from \eqref{eq:decay} implies that
  $\sup_i\Mod{{d}_{A_i}*F_{A_i}}_{L^2}<\infty$. Then, we can apply
  Uhlenbeck compactness (Proposition \ref{prop:uhhigh}) and obtain a sequence of
  gauge transformations $\{k_i\}_i$ in $H^3(\K)$, such that $k_i(A_i)$
  converges weakly to $A_\infty$ in $H^2$ and strongly in $W^{1,p}$,
  because of the compact embedding $H^2 \hra W^{1,p}$.

  We now show convergence of the maps $k_iu_i$. We know that the
  connections $k_iA_i$ converge to a limit connection weakly in
  $H^2(\Sig)$ and the energy of the gauged maps $(A_i,u_i)$ is
  bounded. We choose a finite cover of $\Sig$ by contractible compact
  sets $\{U_\alpha\}_{\alpha \in \A}$ with smooth boundary. On any of
  these subsets $U_\alpha$, we can apply the local convergence result
  Proposition \ref{prop:locconv} on the sequence
  $k_i(A_i,u_i)|_{U_\alpha}$. The conclusion of Proposition
  \ref{prop:locconv} is that after passing to a subsequence, the
  bubbling set $Z \subset \interior(U_\alpha)$ of $k_iu_i$ is finite
  and there is a limit map $u_\infty:\interior(U_\alpha) \to X$ such
  that $k_iu_i$ converges to $u_\infty$ weakly in $H^3(S)$ for all
  compact subsets $S \subset U_\alpha \bs (Z \cup \partial
  U_\alpha)$. By applying on $U_\alpha$ for all $\alpha \in \A$ and
  successively passing to subsequences of $k_i(A_i,u_i)$, we obtain a
  finite bubbling set $Z \subset \interior(\Sig)$ and a limit
  $u_\infty:\interior(\Sig) \to X$ such that $k_iu_i$ converges to
  $u_\infty$ weakly in $H^3(S)$ for all compact subsets $S \subset
  \Sig \bs (Z \cup \partial \Sig)$. This proves part \eqref{part:convb} of the
  Theorem.

  We now prove part \eqref{part:convd} of the Theorem. We remark that for the case
  when $\Sig$ has boundary, we have defined $u_\infty$ only in the
  interior of $\Sig$. While proving part \eqref{part:convc} later, we will show that
  $u_\infty$ extends to the boundary.  But for now
  $u_\infty|_{\interior(\Sig)}$ is enough to prove part \eqref{part:convd}, as we
  will only prove the relations $d_{A_\infty}F_\infty=0$ and
  $(F_\infty)_{u_\infty}=0$ weakly.  Denote
  $(A_i',u_i'):=k_i(A_i,u_i)$ and
  $F_i':=\Ad_{k_i}F_i=*F(A'_i)+\Phi(u_i')$. Consider the sequence
  $\Phi(u_i')$ in $L^p$
  \begin{align*}
    \Mod{\Phi(u_i')-\Phi(u_\infty)}_{L^p(\Sig)} &\leq \Mod{\Phi(u_i')-\Phi(u_\infty)}_{L^p(\Sig\bs B_\eps(Z\cup \partial \Sig))}\\
    &\quad +\Mod{\Phi(u_i')}_{L^p(B_\eps(Z\cup \partial\Sig))}+\Mod{\Phi(u_\infty)}_{L^p(B_\eps(Z\cup \partial\Sig))}.
  \end{align*}
  Since, $\Mod{\Phi}_{L^\infty}$ is bounded, the second and third
  terms can be made small by taking small enough $\eps$. From \eqref{part:convb}, we
  have $u'_i \to u_\infty$ in $L^p(\Sig \bs B_\eps(Z))$. Therefore
  $\Phi(u_i') \to \Phi(u_\infty)$ in $L^p$. The sequence $F(A_i')$
  converges to $F(A_\infty)$ in $L^p$ as $A_i' \to A_\infty$ in
  $W^{1,p}$. Adding, we get $F_i' \to F_\infty$ in $L^p$. We know from
  earlier in the proof, that $\Mod{F_i'}_{H^1}$ is uniformly
  bounded. Hence, after passing to a subsequence, $F_i'$ converges to
  a limit weakly in $H^1$ and strongly in $L^p$. This limit must be
  $F_\infty$ and therefore ${d}_{A_i'}F_i' \weakto
  {d}_{A_\infty}F_\infty$ in $L^2$. Further, by the convergence in
  \eqref{eq:decay}, we have ${d}_{A_\infty}F_\infty=0$.

  It remains to show $(*F_\infty)_{u_\infty}=0$, for which we work on
  local trivializations of the bundle $P$. Let $S \subseteq \Sig \bs (Z
  \cup \partial \Sig)$ be a compact set on which $P$ is trivializable. Then, $u_i'$ is a map from $S$ to $X$. Since
  $u_i' \to u_\infty$ in $C^0(S)$, we can take $S$ small enough that
  for any $i$, $u_i'(S) \subseteq V$ and $V$ is a chart of $X$ that is
  bi-holomorphic to a subset of $\C^n$. Therefore, we may view $u_i'$ as a
  map from $S$ to $\C^n$.  Define a map
  \begin{align}\label{eq:defl}
    L:S \to \on{Hom}(\mathfrak{k}, \C^n) \qquad x \mapsto (\xi \mapsto \xi_x).
  \end{align}
  The map $L$ is smooth and $(F_i')_{u_i'}$ can be written as $(L
  \circ u'_i)F_i'$. Since $L \circ u'_i $ converges to $L \circ
  u_\infty$ in $C^0$ and $F_i'$ converges to $F_\infty$ in $L^p$, we
  get $(F_i')_{u'_i} \to (F_\infty)_{u_\infty}$ in $L^p(S)$. By the
  convergence in \eqref{eq:decay}, $(F_\infty)_{u_\infty}=0$ on $S$
  and hence on $\Sig \bs (Z \cup \partial \Sig)$. That means
  $(F_\infty)_{u_\infty}=0$ almost everywhere on $\Sig$ and this
  proves the result.
\end{proof}

In the process of proving part \eqref{part:convb} of Theorem \ref{thm:convintro} for closed base manifolds, we have proved the following statement for the case when $\Sig$ has boundary.
\begin{proposition}\label{prop:bdrylim} Assume the setting of Theorem \ref{thm:convintro}. Suppose $\{t_i\}_i$ be the sequence of increasing time points and $k_i$ the sequence of gauge transformations from the conclusion of Theorem \ref{thm:convintro} part \eqref{part:conva}. Then, after passing to a subsequence, there is a finite bubbling set $Z \subset \interior(\Sig)$ and a limit map $u_\infty:\interior(\Sig) \to P(X)$ such that $k_iu_{t_i}$ converges to $u_\infty$ weakly in $H^3(S)$ for any compact set $S \subset \Sig \bs (\partial \Sig \cup Z)$. Further, $F_\infty:=*F_{A_\infty} + \Phi(u_\infty)=0$.
\end{proposition}
\begin{proof} All the statements except $F_\infty=0$ have been proved in the process of proving Theorem \ref{thm:convintro}, part \eqref{part:convb}. For this, we recall from the proof of part \eqref{part:convd} of Theorem \ref{thm:convintro} that $F_\infty$ is the weak $H^1(\Sig)$-limit of the sequence $F_i$. The boundary trace map
$$H^1(\Sig,E) \to H^{1/2}(\partial \Sig, E|_{\partial \Sig}) \quad \sig \mapsto \sig|_{\partial \Sig}$$
is continuous. Since $F_i'=0$ on the boundary for all $i$,
$F_\infty$ is also zero on the boundary. Further, by Theorem \ref{thm:convintro}\eqref{part:convd}, we know that $d_{A_\infty}F_\infty=0$, therefore $F_\infty=0$ on $\Sig$. 
\end{proof}

\subsection{Unique limit of heat flow} \label{subsec:uniquelimit}

In this Section, we prove Theorems \ref{thm:bdryintro} and
\ref{thm:closedintro}, which say that if the base manifold has
boundary, or if the limit map takes the generic point to the
semistable locus, then the heat flow trajectory has a unique limit up
to gauge transformations. Furthermore, the limit lies in the complex
gauge orbit containing the flow line. 
We first discuss the additional assumptions required in the case when
$\Sig$ does not have boundary.
\begin{assumption}\label{ass:freeaction} If $\partial \Sig \neq
  \emptyset$, the action of $K$ on $\Phi^{-1}(0)$ has finite
  stabilizers.
\end{assumption}
This assumption implies that the $G$-action on
$G\Phi^{-1}(0)$ has finite stabilizers. In fact this is the open stratum of the gradient flow
of $|\Phi|^2$ studied by Kirwan \cite{Kirwan}. In case, the target $X$
has the structure of a projective variety, then $G\Phi^{-1}(0)$
coincides with the semistable locus $X^{\on{ss}}$, so in any case we denote
$G\Phi^{-1}(0)$ by $X^{\on{ss}}$. The complement $X\bs X^{\on{ss}}$ is a union of
complex submanifolds of codimension at least 2. Therefore, for a gauged
holomorphic map $(A,u)$, if $u^{-1}(P(X^{\on{ss}}))$ is non-empty, then it
must be the complement of a finite subset of $\Sig$. In that case, we
say that the gauged map $(A,u)$ is {\it generic}.  Under the
Assumption \ref{ass:freeaction}, the constant
\begin{equation}\label{eq:c0def}
  c_0:=\inf\{|\Phi(x)|: \text{stabilizer of $x$ is infinite}\}.
\end{equation}
is positive. We will show that the hypothesis $E(A_0,u_0) \leq
c_0^2\on{vol}(\Sig)$ in Theorem \ref{thm:closedintro} ensures that for
flow line starting at $(A_0,u_0)$, the limit modulo bubbling
$(A_\infty,u_\infty)$ is generic.

We now outline the proof of Theorem \ref{thm:bdryintro} and
\ref{thm:closedintro}. In both cases, the limit of the heat flow
modulo bubbling $(A_\infty,u_\infty)$ is a vortex. Proposition
\ref{prop:orbitvortex} below says that if a gauged holomorphic map is
`close' to a vortex away from bubbling points, then it can be complex
gauge transformed to a vortex, via a small complex gauge
transformation. This Proposition, when applied to a converging sequence
of points on the gradient flow, would produce a sequence of
vortices. But a complex gauge orbit has at most one unitary gauge
orbit of vortices. Therefore the sequence of vortices are actually
unitary gauge equivalent. The proof of the Theorems is finished by
showing that, modulo unitary gauge equivalence, the sequence of
vortices is the limit of the heat flow.
\begin{proposition} \label{prop:orbitvortex}{\rm(Pairs close to a
    vortex are complex gauge equivalent to a vortex)} Let $k=0$ or $1$ and
  $p>1$ be such that $(k+1)p>2$. Suppose $\Sig$ is a compact Riemann
  surface possibly with a smooth boundary. Let $(A_i,u_i)$ be a
  sequence of gauged holomorphic maps on $\Sig$. Suppose $A_i \to
  A_\infty$ in $W^{k,p}$ and there is a finite set $Z \subseteq \Sig$
  so that $u_i \to u_\infty$ in $C^0$ on compact subsets of $\Sig \bs
  (Z \cup \partial \Sig)$. Also, $F_i:=*F(A_i)+u_i^*\Phi \to 0$ in
  $W^{k-1,p}$. Further,
  \begin{equation}
    \label{eq:condbd}
    \text{$\Sig$ has boundary or $u_\infty$ is generic}.
  \end{equation}

  Then, there exist constants $C$ and $i_0$ so that for $i>i_0$, there
  exists $\xi_i \in W^{k+1,p}(\Sig,P(\mathfrak{k}))$ such that $\xi_i|_{\partial
    \Sig}=0$ and $(\exp i\xi_i)(A_i,u_i)$ is a vortex and satisfies
  $\Mod{\xi_i}_{W^{k+1,p}}<8C\Mod{F_i}_{W^{k-1,p}}$.
\end{proposition}
\begin{remark}
  Proposition \ref{prop:orbitvortex} is only used in the case $k=1$, but the lower regularity
result for $k=0$ has applications in the article \cite{VW:affine} and
does not involve extra work.  For the $k=0$ case, we remark that if $p>2$, the curvature
of an $L^p$-connection is in $W^{-1,p}$ by Sobolev multiplication
(Proposition~\ref{prop:sobmult}). 
\end{remark}
\begin{proof}[Proof of Proposition \ref{prop:orbitvortex}]
  The proof of the Proposition is by applying the implicit function
  Theorem (in the form of Proposition \ref{prop:impfnMS}) to the
  functions $\F_i$ defined below.  For every $(A_i,u_i)$, define
  \begin{align*}
    \F_i:\Gamma(\Sig,P(\mathfrak{k}))_\partial \to \Gamma(\Sig,P(\mathfrak{k})), \quad \xi \mapsto
    F_{(\exp i\xi)A_i, (\exp i\xi)u_i}.
  \end{align*}
  Here $\Gamma(\Sig,P(\mathfrak{k}))_\partial:=\{\xi \in \Gamma(\Sig,P(\mathfrak{k})):\xi|_{\partial
    \Sig}=0\}$. We recall that for any $s>\frac 1 p$,
  $W^{s,p}_\partial(\Sig,P(\mathfrak{k}))$ is the subspace of
  $W^{s,p}(\Sig,P(\mathfrak{k}))$ consisting of sections $\sig$ whose boundary
  trace $\sig|_{\partial \Sig}$ vanishes.  The map $\F_i$ extends to a
  smooth map between Sobolev completions $\F_i:W^{k+1,p}_\partial \to
  W^{k-1,p}$. This is because the action of $W^{k+1,p}$-complex gauge
  transformations on gauged holomorphic pairs in $\A^{k,p} \times
  W^{k+1,p}(\Sig,P(X))$ is smooth (see Lemmas \ref{lem:gcactona} and
  \ref{lem:gcactonmap}), and the map $(A,u) \mapsto *F_A + \Phi(u)$ is
  a smooth map from $\A^{k,p} \times W^{k+1,p}(\Sig,P(X))$ to
  $W^{k-1,p}(\Sig,P(\mathfrak{k}))$. The differential of $\F_i$ at a point $\xi
  \in W^{k+1,p}_\partial$ is given by
$$D\F_i(\xi)\xi_1={d}_{e^{i\xi}A_i}^*{d}_{e^{i\xi}A_i}\xi_1+(e^{i\xi}u_i)^*{d}\Phi(J(\xi_1)_{e^{i\xi}u_i}):W^{k+1,p}_\partial \to W^{k-1,p}.$$
This is because for any $\zeta \in \Gamma(\Sig,P(\mathfrak{k}))$, the action of
the infinitesimal complex gauge transformation $i\zeta$ on a
connection $A$ is given by $*d_A\zeta$ and the curvature varies with
the connection as
 $$F_{A+ta}=F_A+td_Aa+\frac{t^2}2[a \wedge a].$$

 {\sc Step 1}: {\em The operator  $D\F_i(0)$ is invertible for large $i$ including $i=\infty$.}\\
 The operator $\Id + {d}_{A_i}^*{d}_{A_i}:W^{k+1,p}_\partial(\Sig,
 P(\mathfrak{k})) \to W^{k-1,p}(\Sig, P(\mathfrak{k}))$ is invertible because the
 Dirichlet problem has a unique solution (see Appendix D in
 \cite{Weh:Uh}). Therefore, the Fredholm index of $\Id + {d}_{A_i}^*{d}_{A_i}$ is $0$. The difference\linebreak
 $u_i^*{d}\Phi(J(\cdot)_{u_i})-\Id$ is a compact perturbation, so
 $D\F_i(0)$ also has Fredholm index $0$. For any non-zero $\xi_1 \in
 W^{k+1,p}_\partial$,
 \begin{align}\label{eq:oneone}
  &\quad\ \langle
   {d}_{A_i}^*{d}_{A_i}\xi_1+u_i^*{d}\Phi(J(\xi_1)_{u_i}),\xi_1\rangle_{\mathfrak{k}}\\
   \notag &=\Mod{{d}_{A_i}\xi_1}_{L^2}^2
   + \int_X\omega_{u_i}((\xi_1)_{u_i},J(\xi_1)_{u_i})>0,
 \end{align}
 For $\Sig$ with boundary, this inequality follows easily. Otherwise,
 by hypothesis \eqref{eq:condbd}, the set $u_\infty^{-1}(P(X^{\on{ss}}))
 \cap \{\xi \neq 0\}$ is a non-empty open set, and by Assumption
   \ref{ass:freeaction}, for any point $x$ in this set $\xi(x)_X \neq
   0$. Therefore $D\F_\infty(0)$ is injective.  Since $u_i$ converges to
   $u_\infty$ away from a finite set, the maps $u_i$ are generic for
   large $i$ and so, $D\F_i(0)$ is also injective. Since the Fredholm
   index of $D\F_i(0)$ is $0$, the operators are onto for large $i$.

   {\sc Step 2}: {\em For large $i$, $\Mod{D\F_i(0)^{-1}}<C$ and $C$ is independent of $i$.}\\
   Let $Q_i$ and $Q_\infty$ denote the inverses of $D\F_i(0)$ and
   $D\F_\infty(0)$ respectively. We will proceed by showing that the
   difference between $D\F_\infty(0)$ and $D\F_i(0)$ is small and so
   $\Mod{Q_i}$ can be bounded in terms of $\Mod{Q_\infty}$. For
   notational convenience, we define an operator $L_x$ for every $x
   \in X$,
   \begin{align*}
     L_x : \mathfrak{k} \to \mathfrak{k}, \quad \xi \mapsto {d}\Phi_x(J\xi_x).
   \end{align*}
   For any $\xi_1 \in W^{k+1,p}_\partial(\Sig,P(\mathfrak{k}))$ and a small
   constant $\eps>0$,
   \begin{align}\label{eq:split3bd}
    &\quad\     \Mod{(D\F_\infty(0)-D\F_i(0))\xi_1}_{W^{k-1,p}(\Sig)}\\
    \notag &\leq \Mod{{d}_{A_\infty}^*{d}_{A_\infty}\xi_1 - {d}_{A_i}^*{d}_{A_i}\xi_1}_{W^{k-1,p}(\Sig)} \\
     \notag &\quad + \Mod{(L_{u_\infty}-L_{u_i})\xi_1}_{W^{k-1,p}(\Sig \bs B_\eps(Z\cup \partial \Sig))} \\
    \notag  &\quad +\Mod{(L_{u_\infty}-L_{u_i})\xi_1}_{W^{k-1,p}(B_\eps(Z\cup \partial
       \Sig))}.
   \end{align}
   Here $B_\eps(Z \cup \partial \Sig)$ denotes an $\eps$-neighborhood
   of $Z \cup \partial \Sig$. The third term in~\eqref{eq:split3bd} satisfies
\begin{align*}
&\quad\ \Mod{(L_{u_\infty}-L_{u_i})\xi_1}_{W^{k-1,p}(B_\eps(Z \cup \partial \Sig))}\\
&\leq 2\Mod{L}_{C^0(X)}\Mod{\xi_1}_{W^{k+1,p}(\Sig)}\cdot vol(B_\eps(Z \cup \partial \Sig)),
\end{align*}
and is bounded by fixing a small enough value of $\eps$ so that
\begin{align*}
  2\Mod{L}_{C^0(X)}\cdot vol(B_\eps(Z \cup \partial \Sig)) \leq \frac
  1 {4\Mod{Q_\infty}}.
\end{align*}
To bound the second term, we use the fact that $u_i \to u_\infty$ in
$C^0(\Sig \bs B_\eps(Z \cup \partial \Sig))$. For large enough $i$,
\begin{align*}
 &\quad\ \Mod{(L_{u_\infty}-L_{u_i})\xi_1}_{W^{k-1,p}(\Sig \bs B_\eps(Z \cup \partial \Sig))} \\
&\leq \Mod{L_{u_\infty}-L_{u_i}}_{C^0(\Sig \bs B_\eps(Z \cup \partial \Sig))}\Mod{\xi_1}_{W^{k+1,p}(\Sig)} \leq \frac 1 {8\Mod{Q_\infty}} \Mod{\xi_1}_{W^{k+1,p}}.
\end{align*}
The first term is bounded similarly. For connections $A$ and $A+a$, we
have the following expansion:
\begin{equation}
  \label{eq:condiff}
  ({d}_{A+a}^*{d}_{A+a}-{d}_A^*{d}_A)\xi_1=*[a \wedge *{d}_A\xi_1]+{d}_A^*[a\wedge \xi_1]+*[a  \wedge *[a\wedge\xi_1]].  
\end{equation}
Therefore, for large enough $i$,
\begin{align*}
&\quad\  \Mod{({d}_{A_\infty}^*{d}_{A_\infty}-{d}_{A_i}^*{d}_{A_i})\xi_1}_{W^{k-1,p}(\Sig)} \\
&\leq c(\Mod{A_\infty-A_i}_{W^{k,p}(\Sig)}+ \Mod{A_\infty-A_i}_{W^{k,p}(\Sig)}^2)\Mod{\xi_1}_{W^{k+1,p}(\Sig)}\\
  &\leq \frac 1 {4\Mod{Q_\infty}}\Mod{\xi_1}_{W^{k+1,p}(\Sig)}.
\end{align*}
Then, $\Mod{D\F_\infty -D\F_i} \leq \frac 1 {2\Mod{Q_\infty}}$ and so,
$\Mod{Q_i} \leq 2\Mod{Q_\infty}$.

{\sc Step 3}: {\em For large $i$ and a section $\xi\in W^{k+1,p}_\partial$ satisfying $\Mod{\xi}_{W^{k+1,p}}<1$, there is
  a constant $c_1$ independent of $i$ such that}
$$\Mod{D\F_i(\xi)-D\F_i(0)} \leq c_1\Mod{\xi}_{W^{k+1,p}}.$$
Proceeding in a similar way as Step 2,
\begin{align}\label{eq:diffderivative}
  \Mod{D\F_i(\xi)-D\F_i(0)} &\leq \Mod{{d}^*_{(\exp i\xi)A_i}{d}_{(\exp
      i\xi)A_i}-{d}^*_{A_i}{d}_{A_i}}\\
\nonumber &\quad + \Mod{L_{(\exp i\xi)u_i}-L_{u_i}}.
\end{align}
Consider the first term. Choose a small constant $\eps>0$. By the convergence of the sequence $A_i$,
for large $i$,
$\Mod{A_i-A_\infty}_{W^{1,p}}<\eps/2$. Choose a smooth
connection $A_0$ such that $\Mod{A_0-A_\infty}_{W^{1,p}}<\eps/2$ so
that we have $\Mod{A_i-A_0}_{W^{1,p}}<\eps$ for large $i$. Using the
base connection $A_0$, we apply Lemma \ref{lem:gcactona}. We can then
conclude that for any $\xi \in W^{k,p}(\Sig,P(\mathfrak{k}))$ satisfying
$\Mod{\xi}_{W^{k+1,p}}<1$, the connection $e^{i\xi}A_i$ is in
$W^{1,p}$. Further, there is a constant $C(\eps)$ independent of $i$
such that
   $$\Mod{(\exp i\xi)A_i - A_i}_{W^{k,p}(\Om)} \leq C\Mod{\xi}_{W^{k+1,p}(\Om)}.$$

   Using the expansion \eqref{eq:condiff} and the multiplication
   theorem (Prop.~\ref{prop:sobmult}), we get
$$\Mod{({d}_{A_i+a}^*{d}_{A_i+a}-{d}_{A_i}^*{d}_{A_i})\xi_1}_{W^{k-1,p}} \leq c(\Mod{a}_{k,p}+\Mod{a}_{k,p}^2)\Mod{\xi_1}_{k+1,p}.$$
Therefore,
\begin{align}\label{eq:adiffbd}
&\quad\  \Mod{({d}^*_{(\exp i\xi)A_i}{d}_{(\exp i\xi)A_i}-{d}_{A_i}^*{d}_{A_i})\xi_1}_{W^{k-1,p}}\\
\notag &\leq c\Mod{(\exp i\xi)A_i - A_i}_{k,p}\Mod{\xi_1}_{k+1,p}\\
\notag  & \leq c\Mod{\xi}_{k+1,p}\Mod{\xi_1}_{k+1,p}.
\end{align}

To bound the second term in \eqref{eq:diffderivative}, we observe that
$\xi \mapsto (L_{(\exp i\xi)u_i}-L_{u_i})$ is a continuous map. So,
$\Mod{L_{(\exp
    i\xi)u_i}-L_{u_i}}_{C^0}<c\Mod{\xi}_{C^0}<c\Mod{\xi}_{k+1,p}$. The
constants are independent of $i$, because by the compactness of $X$,
there is a constant $c$ for which $d_X(e^{i\xi}u_i(z),u_i(z))<c|\xi|$
for any $z \in \Sig$ and $\xi \in \mathfrak{k}$. Now, since $k-1 \leq 0$,
\begin{align}\label{eq:udiffbd}
\Mod{(L_{(\exp i\xi)u_i}-L_{u_i})\xi_1}_{W^{k-1,p}}&\leq c\Mod{L_{(\exp i\xi)u_i}-L_{u_i}}_{C^0}\Mod{\xi_1}_{W^{k-1,p}}\\
\nonumber &\leq c\Mod{\xi}_{k+1,p}\Mod{\xi_1}_{k+1,p}.  
\end{align}

Therefore, by \eqref{eq:adiffbd} and \eqref{eq:udiffbd}, there is a constant $c_1$ independent of $i$, such that for
large enough $i$ and $\Mod{\xi}_{k+1,p}<1$,
$$\Mod{D\F_i(\xi)-D\F_i(0)} \leq c_1\Mod{\xi}_{W^{k+1,p}}.$$

{\sc Step 4}: {\em Finishing the proof of Proposition \ref{prop:orbitvortex}.}\\
Let $\delta_{max}:=1/2Cc_1$. We assume $i$ is large enough that the
results in Steps 1-3 hold, and $\Mod{F_i}_{W^{k-1,p}}<\frac
{\delta_{max}}{8C}$. We can restate the result in Step 3 as: if $\xi$
satisfies $\Mod{\xi}_{W^{k+1,p}}<\delta_{max}$, then,
$\Mod{D\F_i(\xi)-D\F_i(0)} \leq \frac 1 {2C}$.  We apply the implicit
function Theorem (Proposition \ref{prop:impfnMS}) on the function
$F_i$ with $\delta:=8C\Mod{F_i}_{W^{k-1,p}}$. Then, we get $\xi_i \in
W^{k+1,p}_\partial$ so that $\F_i(\xi_i)=0$ and
$\Mod{\xi_i}_{k+1,p}<\delta=8C\Mod{F_i}_{W^{k-1,p}}$.
\end{proof}
The following definition is handy in stating and proving the next few results.
\begin{definition}[$\G_{\partial,K}(P)$] Given a Riemann surface
  $\Sig$ possibly with boundary, and a principal bundle $P \to \Sig$,
  $\G_{\partial,K}$ is the subgroup of the complexified gauge group
  $\G(P)$ consisting of $g \in \G(P)$ for which $g(\partial \Sig)
  \subset K$. Alternately, we can say $\G_{\partial,K}(P)$ consists of
  elements $ke^{i\xi}$, where $(k,\xi) \in \K(P) \times
  \Gamma(\Sig,P(\mathfrak{k}))_\partial$.
\end{definition}

The following is the next result required in the proof of Theorems \ref{thm:bdryintro} and \ref{thm:closedintro}.
\begin{proposition} \label{prop:atmost1vortex}{\rm(At most one vortex
    in a complex gauge orbit)} Let $p>1$ and $k \in \Z_{\geq 0}$ be
  such that $(k+1)p>2$. Let $\Sig$ be a compact connected Riemann
  surface possibly with a smooth boundary. Let $(A_0,u_0)$, $(A_1,u_1)
  \in W^{k,p} \times W^{k+1,p}$ be vortices on a principal bundle $P
  \to \Sig$ that are related by a complex gauge transformation $g \in
  \G^{k+1,p}_{\partial,K}(P)$, i.e. $(A_1,u_1)=g(A_0,u_0)$. Further,
  \begin{equation*}
    \text{$\Sig$ has boundary or $u_\infty$ is generic}.
  \end{equation*}
  Then, $(A_0,u_0)$ and $(A_1,u_1)$ are gauge-equivalent, i.e. $g \in
  \K^{k+1,p}(P)$.
\end{proposition}
\begin{proof}
  A vortex is a zero of the moment map $*F_{A,u}$, so the proof is
  similar to the finite-dimensional case - Lemma
  \ref{lem:uniquemu0}. The Cartan diffeomorphism \eqref{eq:cartan}
  induces a smooth bijection
  \begin{align*}
    \G^{k+1,p} \to \K^{k+1,p} \times W^{k+1,p}(\Sig,P(\mathfrak{k})) && g
    \mapsto (k,\xi) \text{ so that $g=ke^{i\xi}$}.
  \end{align*}
  So, if $(A,u)$ and $(A',u')$ are vortices that are related by a complex
  gauge transformation in $\G_{\partial,K}$, after a gauge transformation, we may assume\linebreak
  $(A',u')=e^{i\xi}(A,u)$ where $\xi \in W^{k+1,p}(\Sig,P(\mathfrak{k}))$ and
  $\xi|_{\partial \Sig}=0$. Let $(A_t,u_t):=e^{it\xi}(A,u)$. For
  $\xi|_{\partial \Sig}=0$,
  \begin{align*}
    \ddt \int_\Sig \lan *F_{A_t,u_t},\xi \ran &= \langle {d}_{A_t}^*{d}_{A_t}\xi+u_t^*{d}\Phi(J(\xi)_{u_t}),\xi\rangle_{\mathfrak{k}}\\
    &=\Mod{{d}_{A_t}\xi}_{L^2}^2 +
    \int_X\omega_{u_t}((\xi)_u,J(\xi)_u)\geq 0.
  \end{align*}
  It is easily seen that the inequality is strict for non-zero $\xi$, in
  case $\Sig$ has non-empty boundary. Otherwise, if $u$ is generic, there is a finite set $Y$ such that $u(\Sig \bs Y) \subset P(X^{\on{ss}})$. This condition is invariant under complex gauge transformations, therefore, for all $t \in [0,1]$, $e^{it\xi}u(\Sig \bs Y) \subset P(X^{\on{ss}})$. By Assumption \ref{ass:freeaction}, for non-zero $\xi$, $\xi_{u_t} \neq 0$ and the above inequality is strict.
  Since $F_{A_0,u_0}=F_{A_1,u_1}=0$, we can conclude $\xi=0$ and that
  the vortices $(A,u)$ and $(A',u')$ are gauge-equivalent.
\end{proof}

The following Lemma proves the technical part of Theorems \ref{thm:bdryintro} and Theorem \ref{thm:closedintro}.
\begin{lemma}
  \label{lem:glim} Suppose  $\Sig$ is a Riemann surface, possibly with smooth
  boundary and $(A_i,u_i) \in W^{1,p} \times W^{2,p}$ is a sequence of
  gauged holomorphic maps on $\Sig$ that satisfies the following.
  \begin{enumerate}
  \item There are complex gauge transformations $g_i \in
    \G^{2,p}_{\partial,K}$ such that $g_i(A_0,u_0)\allowbreak =(A_i,u_i)$.
  \item There is a vortex $(A_\infty,u_\infty)$ such that $A_\infty
    \in W^{1,p}(\Sig)$ and $u_\infty \in\linebreak
    W^{2,p}_{\loc}(\interior(\Sig))$ such that the sequence
    $(A_i,u_i)$ converges to $(A_\infty,u_\infty)$ in the following
    sense:
    \begin{align}\label{eq:convsense}
      A_i &\xrightarrow{W^{1,p}(\Sig)} A_\infty,& u_i
      &\xrightarrow{W^{1,p}(S)} u_\infty 
    \end{align}
for all compact
        subsets $S \subset \Sig\bs(\partial \Sig \cup Z)$.
  \end{enumerate}
  Then, the map $u_\infty$ extends to $\Sig$ as a $W^{2,p}$-map and
  there is a complex gauge transformation $g_\infty\in
  \G_{\partial,K}^{2,p}$ such that
  $(A_\infty,u_\infty)=g_\infty(A_0,u_0)$ and the sequence $g_i$
  converges to $g_\infty$ weakly in $W^{2,p}(\Sig)$.
\end{lemma}
\begin{proof} We first transform the sequence of gauged holomorphic
  maps to vortices via a sequence of small complex gauge
  transformations in a way that the new sequence still has the same
  limit. For this, we observe that the sequence $(A_i,u_i)$ satisfies
  the hypothesis of Proposition \ref{prop:orbitvortex}. This is
  because, by arguments similar to the proof of Theorem
  \ref{thm:convintro} \eqref{part:convd}, the sequence $F_{A_i,u_i}$ converges in
  $L^p(\Sig)$ to $F_{A_\infty,u_\infty}$, which is zero.  By dropping
  a tail of the sequence, we may assume $i_0=0$. Therefore, for all
  $i$, there exist $\xi_i \in W_\partial^{2,p}$ such that
  $(A_i',u_i'):=(\exp i\xi_i)(A_i,u_i)$ is a vortex and $\xi_i \to 0$
  in $W^{2,p}$. The action of complex gauge transformations on gauged
  maps is continuous, and so, the sequence $(A_i',u_i')$ converges to
  $(A_\infty,u_\infty)$ in the sense of \eqref{eq:convsense}.

  We next show that the sequence of vortices are related to each other
  by a sequence of weakly converging unitary gauge
  transformations. The complex gauge transformations of the previous paragraph can be represented schematically as
  \begin{equation*}
    (A_0',u_0') \xleftarrow{\exp(i\xi_0)} (A_0,u_0) \xrightarrow{g_i} (A_i,u_i) \xrightarrow{\exp(i\xi_i)} (A_i',u_i').
  \end{equation*}
  From this diagram, we can conclude that the sequence of vortices
  satisfies $(A_i',u_i')=e^{i\xi_i}g_ie^{-i\xi_0}(A_0',u_0')$, and the
  complex gauge transformations $e^{i\xi_i}g_ie^{-i\xi_0}$ are in
  $\G^{2,p}_{\partial,K}$. By Proposition \ref{prop:atmost1vortex}, up
  to unitary gauge equivalence, there is a unique vortex in a
  $\G^{2,p}_{\partial,K}$-orbit. Therefore, $e^{i\xi_i}g_ie^{i\xi_0}$
  is actually a gauge transformation. We denote
  $k_i:=e^{i\xi_i}g_ie^{-i\xi_0} \in \K^{2,p}$. Since the sequence of
  connections $k_iA_0'$ converges to $A_\infty$ in $W^{1,p}(\Sig)$, by
  Lemma \ref{lem:hausquot}, we can conclude that, after passing to a
  subsequence, the gauge transformations $k_i$ converge weakly to a
  limit $k_\infty$ in $W^{2,p}$.

  This indeed proves the Proposition. The complex gauge
  transformations $g_i$, which are equal to $e^{-i\xi_i}k_ie^{i\xi_0}$,
  converge weakly in $W^{2,p}$ and strongly in $C^1$. The limit is
  $g_\infty:=k_\infty e^{i\xi_0}$. Therefore, the sequence $u_i$ also
  converges weakly in $W^{2,p}(\Sig)$ to a limit, which must agree
  with $u_\infty$ on $\interior(\Sig)$.
\end{proof}

\begin{proof}[Proof of Theorem \ref{thm:bdryintro}]
  The Theorem can be proved in a straightforward manner by applying
  Lemma \ref{lem:glim}. We start with the setting in Theorem
  \ref{thm:convintro}.  We recall that by Theorem \ref{thm:convintro}
  part \eqref{part:conva}, there is an increasing sequence $\{t_i\}_i$ of time points
  and a sequence of gauge transformations $k_i \in \K(P)_{H^3}$ such
  that the sequence of gauged maps $k_i(A_{t_i},u_{t_i})$ converges to
  a limit $(A_\infty,u_\infty)$ in the sense of
  \eqref{eq:convsense}. That is, the convergence of the maps
  $k_iu_{t_i}$ is away from the bubbling points and the boundary.  By
  Proposition \ref{prop:bdrylim}, the limit $(A_\infty,u_\infty)$ is a
  vortex.  We further recall that the gauged maps
  $k_i(A_{t_i},u_{t_i})$ are related to the starting point of the flow
  $(A_0,u_0)$ by a complex gauge transformation in
  $\G_{\partial,K}$. Therefore, Lemma \ref{lem:glim} is applicable on
  the sequence $k_i(A_{t_i},u_{t_i})$, and we can conclude that
  $u_\infty$ extends to the boundary $\partial \Sig$ and is in
  $W^{2,p}(\Sig)$. Further, there is a complex gauge transformation
  $g_\infty \in \G_{\partial,K}^{2,p}$ such that
  $g_\infty(A_0,u_0)=(A_\infty,u_\infty)$, which completes the proof
  of Theorem \ref{thm:bdryintro} \eqref{part:bdrya}. By Proposition
  \ref{prop:atmost1vortex}, there is a unique vortex up to gauge in
  the $\G_{\partial,K}$-orbit of $(A_0,u_0)$, which proves part \eqref{part:bdryb} of
  the Theorem. By the Cartan map, the limit complex gauge
  transformation $g_\infty$ can be written as $g_\infty=k_\infty
  e^{i\xi_\infty}$, where $k_\infty \in \K^{2,p}(P)$, $\xi_\infty \in
  W^{2,p}(\Sig,P(\mathfrak{k}))$ and $\xi|_{\partial \Sig}=0$. The element
  $\xi_\infty$ is uniquely determined.
\end{proof}

The proof of Theorem \ref{thm:convintro} part \eqref{part:convc} follows from the conclusions of Theorem \ref{thm:bdryintro}.
\begin{proof}[Proof of Theorem \ref{thm:convintro} \eqref{part:convc}] Suppose $(A_t,u_t)$ is the smooth gradient flow trajectory modulo gauge. Then, there is a family of complex gauge transformations in $g_t \in \G_{\partial,K}(P)$ such that $g_t(A_0,u_0)=(A_t,u_t)$. By Theorem \ref{thm:convintro} part \eqref{part:conva}, there is a sequence $t_i$ of increasing time points and a sequence of gauge transformations $k_i$ such that the sequence $k_i(A_{t_i},u_{t_i})$ converges modulo bubbling to a limit $(A_\infty,u_\infty)$. The proof of Theorem \ref{thm:bdryintro} proceeded by applying Lemma \ref{lem:glim} to the sequence $k_ig_{t_i}(A_{t_i},u_{t_i})$. One of the conclusions of the Lemma is that the sequence of complex gauge transformations $k_ig_{t_i}$ has a weak limit $g_\infty$ in $W^{2,p}(\Sig)$. By the compact inclusion $W^{2,p} \hra C^1$, the sequence $k_ig_{t_i}$ strongly converges to $g_\infty$ in $C^1(\Sig)$. Therefore, by Lemma \ref{lem:gcactona}, the sequence $k_iu_{t_i}$, which is same as $k_ig_{t_i}u_0$, converges to a limit $g_\infty u_0$ in $C^1(\Sig)$. Since the sequence $k_iu_{t_i}$ converges modulo bubbling to $u_\infty$, $u_\infty$ agrees with  $g_\infty u_0$ away from the bubbling set. Hence, $g_\infty u_0=u_\infty$ on $\Sig$.
\end{proof}

\begin{proof}[Proof of Theorem \ref{thm:closedintro}]
 We first show that the limit of the heat flow modulo bubbling is a vortex.
  The limit $(A_\infty, u_\infty)$ computed in Theorem \ref{thm:convintro} is a critical
  point of the flow and it satisfies ${d}_{A_\infty}F_\infty=0$ and
  $(F_\infty)_{u_\infty}=0$.
  The first equation implies that the conjugacy class of $F_\infty$ is
  constant. If $F_\infty \neq 0$, then all points in the image of $u$ have an infinite stabilizer group, and hence $u$ maps to $X \bs X^{\on{ss}}$ (see Assumption \ref{ass:freeaction}). This implies
  that $|\Phi(u_\infty(x))|>c_0>0$ and hence,
  $E(A_\infty,u_\infty)>c_0^2\on{vol}(\Sig)$. By Proposition
  \ref{prop:endec}, energy of gauged holomorphic maps decreases along the flow line, which means that $E(A_0,u_0) \geq E(A_\infty,u_\infty) \geq
  c_0^2\on{vol}(\Sig)$ which contradicts the hypothesis of the Theorem.

  We have proved that $F_\infty=0$ and the limit  $(A_\infty,u_\infty)$ is generic. Now, Proposition
  \ref{prop:orbitvortex} is applicable and the rest of the
  proof proceeds in the same way as the proof of Theorem
  \ref{thm:bdryintro}.
\end{proof}

\section{Heat flow on vector space target}\label{sec:vstarget}
In this section, we prove that if the target $X$ is a symplectic
vector space with a linear action of the group $K$ and a proper moment
map, we can apriori say that the image of the heat flow is contained
in a compact subset of the target. Then the results about heat flow
with a compact target are applicable. The following Lemma proves the
result in the case the base manifold $\Sig$ does not have boundary.

\begin{lemma}\label{lem:ubd} Suppose $\Sig$ is a compact Riemann
  surface without boundary and $X=\C^n$ with a linear $K$-action and
  proper moment map $\Phi$. Let $P \to \Sig$ be a principal
  $K$-bundle.  Given a constant $k$ there is a compact set $S \subset
  X$ such that if $(A,u)$ is a gauged holomorphic curve on $P$ with
  the energy bound $E(A,u) \leq k$, then $u(\Sig) \subset S$.
\end{lemma}

For the proof, we define the following operator. For any $x \in X$,
define
\begin{equation}\label{eq:defL}
  L_x : \mathfrak{k} \to T_XX,\quad \xi \mapsto \xi_X(x).
\end{equation}
Given a section $u:\Sig \to P(X)$, $L_u \in \Gamma(\Sig,
P(\End(\mathfrak{k},u^*TX)))$ is a section of a vector bundle on $\Sig$.
 
\begin{proof} 
  The proof uses elliptic regularity to produce a $C^0$ bound on
  $u$. We first produce a local $H^1$ bound on the connection $A$ and
  a preliminary $L^2$ bound on the map $u$. This is done using the
  energy bound, which implies $\Mod{F(A)}_{L^2}<k$ and
  $\Mod{\Phi(u)}_{L^2} < k$. By Uhlenbeck's local theorem
  (\cite{Uh:compactness}), we can find a cover of $\Sig$,
  $\cup_\alpha\U_\alpha$ and local trivializations under which the
  connection $A$ is $d+a_\alpha$ on $\U_\alpha$ and
  $\Mod{a_\alpha}_{H^1(\U_\alpha)}<c_k$. Here $c_k$ is a constant
  depending only on $k$. Suppose, under this trivialization $u$ is
  given by $u_\alpha:\U_\alpha \to \C^n$. Since $\Phi$ is a quadratic
  function on $X$ and is proper, we get
  $\Mod{u}_{L^2}<c(1+\Mod{\Phi(u)}_{L^2})<c_k$.

  Now, we apply elliptic regularity. By holomorphicity of $(A,u)$, we
  have $\delbar u_\alpha = (a_\alpha)_{u_\alpha}^{0,1}$. The term
  $(a_\alpha)_{u_\alpha}$ can be seen as the product of two sections
  $L_{u_\alpha} \in \Gamma(\U_\alpha, \End(\mathfrak{k},u_\alpha^*TX))$ and
  $a_\alpha \in \Om^1(\U_\alpha, \mathfrak{k})$. To bound the first term $L_u$,
  we observe that $|L_x|$ grows linearly with $x$, so $|L_x| \approx
  c|\Phi(x)|^{1/2}$. Since $\Mod{\Phi(u_\alpha)}_{L^2}<k$,
  $\Mod{L_{u_\alpha}}_{L^4}<c_k$. Then, by the multiplication theorem,
  $\Mod{(a_\alpha)_{u_\alpha}}_{L^{2+\eps}(\U_\alpha)}<c_k$.  Let
  $\U_\alpha'' \subset \U_\alpha' \subset \U_\alpha$, be such that
  $\{\U_\alpha'\}_\alpha$ and $\{\U_\alpha''\}_\alpha$ still cover
  $\Sig$. We apply interior elliptic regularity twice. First,
  \begin{align*}
    \Mod{u_\alpha}_{W^{1,2}(\U_\alpha')} \leq c(\Mod{\delbar
      u_\alpha}_{L^2(\U_\alpha)}+\Mod{u_\alpha}_{L^2(\U_\alpha)}) \leq
    c_k
  \end{align*}
  By Sobolev embedding, $W^{1,2} \hra L^{2+\eps}$ and so, the
  $L^{2+\eps}$ norms of $u_\alpha$ are bounded. Next,
  \begin{align*}
    \Mod{u_\alpha}_{W^{1,2+\eps}(\U_\alpha'')} \leq c(\Mod{\delbar
      u_\alpha}_{L^{2+\eps}(\U_\alpha')}+\Mod{u_\alpha}_{L^{2+\eps}(\U_\alpha')})
    \leq c_k
  \end{align*}
  By the inclusion $W^{1,2+\eps} \hra C^0$, $u_\alpha(\U_\alpha)$ is
  contained in a compact set $S_\alpha \subseteq \C^n$. The image of
  $u$ is contained in the compact set $\cup_\alpha KS_\alpha$.
\end{proof}

When the base manifold has boundary, we can prove the existence of
heat flow on a larger class of non-compact manifolds. We require that
the target manifold $X$ is {\em equivariantly convex} (as defined by
Cieliebak et al \cite{CGMS}) and has proper moment map. An important
example of equivariantly convex spaces are vector spaces with a linear
group action and a proper moment map. The notion of convexity for
symplectic manifolds first arose in work by Eliashberg and Gromov
\cite{Eliashberg:convexity}.
\begin{definition}\label{def:convexinfty}
  A K\"ahler manifold $(X,\om,J)$ with a Hamiltonian $K$-action is
  {\em equivariantly convex at infinity} if there is a $K$-invariant
  proper function $f:X \to \R_{\geq 0}$ and a value $c_0$ such that if
  $f(x)>c_0$, then
  \begin{align}\label{eq:convexinfty}
    \lan \nabla_\xi \nabla f, \xi \ran \geq 0 \quad \forall \xi \in
    T_xX, \quad df(J\Phi(x)_X) &\geq 0.
  \end{align}
  Here $\nabla f \in \on{Vect}(X)$ is the gradient vector field of $f$
  with respect to the metric $\om(\cdot,J\cdot)$.
\end{definition}
The above definition is equivalent to the definition in \cite{CGMS},
where there is an additional term $\lan \nabla_{J\xi} \nabla f, J\xi
\ran$ in the left hand side of the first equation above. But, when $X$
is K\"ahler $\nabla_{J\xi}=J\nabla \xi$ so that term is equal to $\lan
\nabla_\xi \nabla f, \xi \ran$.  This condition implies that $f$ is
sub-harmonic on holomorphic curves mapping to $f^{-1}(c_0,\infty)$. To
see this, consider a holomorphic curve $u:B_r \subset \C \to
f^{-1}(c_0,\infty) \subset X$. On $B_r$, re-write the holomorphic
coordinate $z$ as $z=s+it$. Then,
\begin{align}\label{eq:subharmonic}
    \Delta (f \circ u) &= \partial_s\lan \nabla f(u), \partial_su \ran +  \partial_t\lan \nabla f(u), \partial_tu \ran\\
 \notag   &=\lan \partial_s\nabla f(u), \partial_su \ran +
    \lan \partial_t\nabla f(u), \partial_tu \ran + \lan \nabla
    f(u), \partial_s^2u+\partial_t^2u\ran\\
    \notag &\geq 0
\end{align}
using \eqref{eq:convexinfty} and the fact that the last term is zero.

\begin{lemma}\label{lem:ubd_bdry} Suppose $\Sig$ is a compact Riemann surface with boundary, and $X$ is a Hamiltonian K\"ahler manifold that is equivariantly convex at infinity and has a proper moment map $\Phi$. Let $P=\Sig \times K$ be the trivial principal $K$-bundle.
  Given a constant $k$ and a $K$-invariant compact set $S_\partial
  \subset X$, there is a compact set $S \subset X$ such that if
  $(A,u)$ is a gauged holomorphic curve on $\Sig$ with the energy
  bound $E(A,u) \leq k$, then $u(\Sig) \subset S$.
\end{lemma}
\begin{proof}
  The proof is by contradiction.  Suppose the lemma is not true. Then
  there is a sequence of gauged holomorphic maps $(A_i,u_i)$
  satisfying the conditions of the Lemma and the union of whose images
  are unbounded in~$X$.

  We first produce a sequence of converging complex gauge
  transformations that make the sequence of connections $A_i$ flat and
  such that the union of images of the sequence of maps $u_i$ is still
  unbounded.  Since $\Mod{F(A_i)}_{L^2(\Sig)}<k$, by Uhlenbeck
  compactness, after passing to a subsequence, there exist gauge
  transformations $k_i \in H^2(\Sig,K)$ so that $k_iA_i$ converges to
  $A_\infty$ weakly in $H^1(\Sig)$. For any $p>2$, after passing to a
  subsequence we have $k_iA_i$ converges to $A_\infty$ strongly in
  $L^p(\Sig)$.  By Lemma \ref{lem:YMbdry} above, there is a complex
  gauge transformation $g \in H^2(\Sig, G)$, with $g|_{\partial
    \Sig}\equiv \Id$ and such that $F_{gA_\infty}=0$. The complex
  gauge transformation $g \in H^2 \hra W^{1,p}$ acts continuously on
  the space of $L^p$ connections by Lemma \ref{lem:gcactona}. So, the
  sequence of connections $gk_iA_i$ converges to $gA_\infty$ in
  $L^p(\Sig)$. By Lemma \ref{lem:toflat} above, for large $i$, there
  exists a sequence $\xi_i \to 0$ in $W^{1,p}(\Sig,\mathfrak{k})$ satisfying
  $\xi_i|_{\partial \Sig}=0$ and so that $e^{i\xi_i}gk_iA_i$ is a flat
  connection weakly.  By the Sobolev embedding theorem, there is a
  $C^0$ bound on $e^{i\xi_i}$ and hence also on
  $e^{i\xi_i}gk_i$. Therefore $\cup_ie^{i\xi_i}gk_iu_i(\Sig)$ is not
  bounded.

  Now, we derive a contradiction using the subharmonicity of $f$,
  where $f:X \to [0,\infty)$ is a proper function satisfying the
  condition equivariant convexity condition \eqref{eq:convexinfty}.
  By the unboundedness of the sequence $e^{i\xi_i}gk_iu_i$, the sequence of maps $f \circ
  e^{i\xi_i}gk_iu_i$ is also unbounded.  For any $i$, suppose $f \circ
  (e^{i\xi_i}gk_iu_i)|_{\Sig}$ assumes its maximum value at a point
  $x_i$. For large $i$, the maximum value is necessarily attained in
  the interior of $\Sig$ because $e^{i\xi_i}gk_iu_i(\partial \Sig)$,
  which is equal to $k_iu_i(\partial \Sig)$ is contained in the
  compact set $S_\partial$. Denote
  $m_i:=f(e^{i\xi_i}gk_iu_i(x_i))$. We may assume $m_i > c_0$. Suppose
  $U$ is a contractible open neighborhood of $x_i$ on which $f \circ
  (e^{i\xi_i}gk_iu_i) \geq c_0$. We will now show that $f \circ
  (e^{i\xi_i}gk_iu_i)$ attains the value $m_i$ on all of $U$. There is
  a gauge transformation $k_i'\in W^{1,p}(\ol U,K)$ be so that
  $k_i'e^{i\xi_i}gk_iA_i$ is the trivial connection on $\ol U$. Then,
  $\delbar (k_i'e^{i\xi_i}gk_iu_i)=0$. Since $f$ is $K$-invariant
  $f\circ (e^{i\xi_i}gk_iu_i)=f\circ (k_i'e^{i\xi_i}gk_iu_i)$ on $\ol
  U$. By \eqref{eq:subharmonic}, $f\circ (k_i'e^{i\xi_i}gk_iu_i)$ is
  sub-harmonic.  By the mean value inequality from complex analysis
  (Proposition 7.7.4 in the book Greene-Krantz \cite{Krantz}), a
  subharmonic function attains its maximum value on the boundary of
  the domain, therefore $f \circ (e^{i\xi_i}gk_iu_i) \equiv m_i$ on
  $U$. Therefore the set $\{f \circ (e^{i\xi_i}gk_iu_i)= m_i\}$ is
  open and closed in $\Sig$. This proves a contradiction because $m_i
  \to \infty$, the map $f:X \to [0,\infty)$ is proper and
  $k_i'e^{i\xi_i}gk_iu_i(\partial \Sig)$ is contained in a compact set
  $S_\partial \subset X$.
\end{proof}

\begin{proof}[Proof of Theorem \ref{thm:vstarget}] Consider a gauged
  holomorphic map $(A_0,u_0)$, and denote $k:=E(A_0,u_0)$. By Lemmas
  \ref{lem:ubd} and \ref{lem:ubd_bdry}, there is a compact set $S(k)
  \subset X$, such that the image of any gauged holomorphic map with
  energy $\leq k$ is contained in $S(k)$.  The heat flow trajectory
  starting at $(A_0,u_0)$, if it exists, would have its image
  contained in $S(k)$, because by Proposition \ref{prop:endec}, energy
  $E(A_t,u_t)$ decreases with $t$. Therefore, all the results about
  existence and convergence of heat flow - Theorems
  \ref{thm:flowintro}, \ref{thm:convintro}, \ref{thm:bdryintro} and
  \ref{thm:closedintro} can be applied with the target $S(k)$ instead
  of $X$.
\end{proof}

\section{Sobolev spaces}\label{sec:sobolevspaces}
The goal of this Section is to define Sobolev completions of
time-dependent sections of vector bundles and prove uniform bounds on
certain operators. The results of this section are used in the proof
of existence of heat flow in Section \ref{subsec:flowexist}.  Section
\ref{subsec:spacesec} introduces Sobolev completions of the space of
sections of vector bundles, including the case when Sobolev exponents
are negative or non-integral. The Sobolev norm is dependent on a
choice of connection. If the connection satisfies a curvature bound,
then the relevant operators between Sobolev spaces of sections will be
uniformly bounded. This is proved in Section \ref{subsec:unifbd} using
Uhlenbeck compactness. Section \ref{subsec:timedep} describes time
dependent sections. Next, in Section \ref{subsec:heateqn}, we show
that in these spaces, the solution of the heat equation has uniformly
bounded norm. Finally, in Section \ref{subsec:interchange}, we define
the space of time-dependent sections that are in the class $H^r$ in
the time direction and $C^0$ in the space direction. All the results
in this section that are not proved or explicitly cited can be found
in Lions-Magenes \cite{LM1}.

\subsection{Sections of vector bundles}\label{subsec:spacesec}
\subsubsection{Definition and basic properties}
In this section, we define\linebreak Sobolev completions of vector bundles associated to a principal $K$-bundle $P$ on a compact Riemann surface $\Sig$, that is equipped with a metric. Suppose $K$ is embedded in $SO(n)$. We consider bundles of the type $E=\wedge^\ell T^*\Sig \tensor (P \times_K \R^n)$. A smooth connection $A$ on the principal bundle $P$ and the Levi-Civita connection on $T\Sig$ together determine a covariant derivative $\nabla_A$ on $E$. For a non-negative integer $s$, we recall that the space $H^s(\Sig, E)$ (also referred to as $H^s(E)$ or $H^s$ if  the other data is obvious) is the completion of $\Gamma(\Sig,E)$ under the norm
\begin{align}\label{eq:sobnorm}
\Mod \sigma^A_s := \left( \sum_{i=0}^s \Mod{\nabla^i_A \sigma}^2_{L^2}\right)^{1/2}, \quad  \sigma \in \Gamma(\Sig,E).
\end{align}
\begin{remark} The space $H^s(E)$ can alternately be defined as the equivalence classes of almost-everywhere defined sections $\sig$ that satisfy $\nabla^i_A \sigma \in L^2$ for $0 \leq i \leq s$. The derivatives $\nabla_A$ are taken in the distributional sense. The space of smooth sections is dense in $H^s(E)$. 
\end{remark}
The following properties are well known. For $s_2 < s_1$, the inclusion 
\begin{equation}\label{eq:includespace}
H^{s_1}(E) \hra H^{s_2}(E)
\end{equation}
is continuous. The operators
\begin{equation}\label{eq:Da}
\nabla_A:H^{s}(E) \lra H^{s-1}(E \tensor T^*X), \quad \nabla_A^*:H^{s}(E\tensor T^*X) \lra H^{s-1}(E)\hspace{-1ex}
\end{equation} 
are continuous by the definition \eqref{eq:sobnorm} of $\Mod{\cdot}_s$. We recall that $\nabla_A^*$ is the same as $\nabla_A$ followed by the contraction $T^*X \times T^*X \rightarrow \R$. 

\subsubsection{Interpolation}

Sobolev completions of non-integral indices are defined by interpolation.
\begin{definition}
The complex Banach spaces $X_0$ and $X_1$ form a {\em compatible pair} if they are subspaces of a Hausdorff topological vector space $\X$. In that case, $X_0+X_1$ and $X_0 \cap X_1$ are also Banach spaces. An {\em interpolation space} $X$ is a Banach space for which the inclusions $X_0 \cap X_1 \subset X \subset X_0+X_1$ are continuous and which satisfies the following: if $L:X_0+X_1 \to X_0+X_1$ is a linear operator for which $L|_{X_i}:X_i \to X_i$ is a bounded map for $i=0,1$, then $L|_X$
is a bounded map from $X$ to itself. It is an interpolation space of exponent $\theta$ if there exists constant $C$ such that $$\Mod{L}_X \leq C \Mod{L}_{X_0}^{1-\theta} \Mod{L}_{X_1}^{\theta} \quad \text{ for all such operators }L.$$ Further, if $C=1$, then $X$ is an exact interpolation space.
\end{definition}
The complex interpolation functor $I_\theta$ produces an exact interpolation space of exponent $\theta$ (see \cite{LM1}, \cite{triebel}). We describe this method of obtaining interpolation spaces. Let $S$ be the strip $\{z \in \C: 0<Re(z)<1\}$. Let $\H(X_0,X_1)$ denote the space of functions $f:\ol{S} \to X+Y$ with the following properties:
\begin{itemize}
\item $f$ is holomorphic on $S$,
\item $\eta \mapsto f(i \eta)$ is a bounded continuous function from $\R$ to $X$ and 
\item $\eta \mapsto f(1+i \eta)$ is a bounded continuous function from $\R$ to $Y$.
\end{itemize}
The space $\H(X_0,X_1)$ is equipped with the norm
$$\Mod{f}_{\H}:=\max(\sup_{\eta \in \R}\Mod{f(i \eta)}_X, \sup_{\eta \in \R}\Mod{f(1+i \eta)}_Y).$$
By the {\it three lines theorem}, $\H$ is a Banach space.
\begin{definition} [Complex Interpolation] \label{def:complexint} Let $X_0$, $X_1$ be a compatible pair of complex Banach spaces. For $0<\theta<1$,
$$[X_0,X_1]_\theta:=I_\theta(X_0,X_1):=\{ a | \exists f \in \H(X_0,X_1): f(\theta)=a \}$$ 
with norm $\Mod{a}_{I_\theta(X_0,X_1)}=\inf \{\Mod{f}_\H|f(\theta)=a \}$.
\end{definition} 
Complex interpolation is a functor. This means that given compatible pairs $(X_0,X_1)$ and $(Y_0,Y_1)$ and a linear map $L:X_0+X_1 \to Y_0+Y_1$ such that the restriction $L|_{X_i}$ is a bounded map from $X_i$ to $Y_i$ for $i=0,1$, the restriction $L|_{[X_0,X_1]_\theta}$ is a bounded map from $[X_0,X_1]_\theta$ to $[Y_0,Y_1]_\theta$ and it satisfies
$$\Mod{L}_{[X_0,X_1]_\theta,[Y_0,Y_1]_\theta}\leq \Mod{L}_{X_0,Y_0}^{1-\theta}\Mod{L}_{X_1,Y_1}^{\theta}.$$ 
Sobolev spaces with non-integral indices are defined by complex interpolation. 
\begin{definition} [Fractional Sobolev spaces]
For an integer $n$ and $0<\theta<1$, $H^{n+\theta}(E):=I_\theta(H^n(E), H^{n+1}(E))$.
\end{definition}
We remark that the Sobolev spaces $H^n(\Sig,E)$ are not complex. But, while applying complex interpolation, we can instead use the spaces\linebreak $H^n(\Sig,E \oplus iE)$, and after interpolating, take the real part of the result. 
For $s_1$, $s_2 \geq 0$, and $0 < \theta < 1$, the map
\begin{equation}\label{eq:interpolpos} 
I_\theta(H^{s_1},H^{s_2}) \to H^{\theta s_1+(1-\theta)s_2}
\end{equation}
is an isomorphism. The operators in \eqref{eq:includespace} and \eqref{eq:Da} are bounded for all $s>0$.

For $s>\dim \Sig/2$, there is an embedding 
\begin{align}\label{eq:embedspace}
H^s(\Sig,E) \hra C^0(\Sig,E).
\end{align}

\subsubsection{The spaces $H_0^s$, $H_\partial^s$} The boundary trace
map $H^s(\Sig,E) \to \linebreak H^{s-\hh}(\partial \Sig, E|_{\partial
  \Sig})$ is well-defined and continuous for $s>\hh$.  Let
$C^\infty_0(\Sig,E)$ denotes the space of smooth sections supported
away from the boundary of $\Sig$. For any $s>\hh$, we define
$H_\partial^s(\Sig,E)$ to be the subspace of $H^s(\Sig,E)$ consisting
of sections whose boundary trace vanishes.  For a non-negative integer
$m$, $H_0^m(\Sig, E)$ is defined as the closure of $C^\infty_0$ in
$H^m(\Sig, E)$. For non-integral exponents, the space $H_0^s$ is
defined by interpolation. That is, for $0<\theta<1$,
$H_0^{m+\theta}(\Sig,E):=[H_0^m,H_0^{m+1}]_\theta.$

\begin{remark}[Alternate characterization of $H^s_0$]\label{rem:lmspace}
  If $s \neq \mu + \hh$, where $\mu$ is an integer, the spaces $H_0^s$
  can be directly defined as the closure of $C^\infty_0(\Sig,E)$ in
  $H^s(E)$. These spaces can be alternately characterized as : $\sig
  \in H^s_0$ if and only if $\sig \in H^s$ and $\frac {\partial^j
    \sig}{\partial \nu^j}=0$ on $\partial \Sig$ for $j=0, \dots ,
  \lfloor s-\hh \rfloor$. So, for $0<s<\hh$, $H_0^s = H^s$.

  However, if $s=\mu+\hh$, $H_0^s(\Sig, E)$ is a strict subspace of
  the closure of $C^\infty_0$ in $H^s(\Sig, E)$, with a finer
  topology. The space $H_0^{\mu+1/2}$ is called the Lions-Magenes space and is
  not closed in $H^{\mu+1/2}$. We will talk about these spaces more in
  the 1-dimensional case in Section \ref{subsec:timedep}.  Our
  notation here is different from \cite{LM1}, where
  $H_0^s(\Sig, E)$ is defined as the closure of $C^\infty_0$ in $H^s(\Sig, E)$
  for all $s$. The space $[H^\mu,H^{\mu+1}]_{1/2}$ is called $H^{\mu+1/2}_{00}$ in \cite{LM1} (see Theorem 11.7, Chapter 1).
\end{remark}
The $H^s_0$ spaces are well-behaved in terms of interpolation. For
$s_1,s_2 \geq 0$ and $0<\theta<1$,
\begin{equation}\label{eq:interpolpos0}
  I_\theta(H_0^{s_1},H_0^{s_2}) \to H_0^{\theta s_1+(1-\theta)s_2}
\end{equation}
is an isomorphism.

\subsubsection{Defining $H^{-s}$ by duality}
\begin{definition} For any $s \geq 0$, $H^{-s}(E)$ is the dual $(H_0^s(E))^*$, i.e. $H^{-s}(E)$ is the completion of $\Gamma(\Sig,E)$ under the norm
\begin{equation}\label{def:dual}\Mod{\sigma}_{-s}:=\sup\left\{\int_\Sig (\sigma,\sigma') :\sigma' \in H_0^s(\Sig,E), \Mod{\sigma'}_s=1\right\}.
\end{equation} 
\end{definition}
Elements in $H^{-s}$ need not be sections that are defined almost everywhere, they are just distributions. 
\begin{notation}
We use the notation $H_*^s$ in statements that apply to both $H^s$ and $H_0^s$.
\end{notation}

 Using the above duality, we have
\begin{proposition}\label{prop:opboundspace}
The operators in \eqref{eq:includespace} and \eqref{eq:Da} are continuous for all $s$, $s_1$, $s_2$.
\end{proposition}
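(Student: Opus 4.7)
My approach would be to extend the already-established positive-order bounds to arbitrary real indices by a standard duality--interpolation scheme, in three steps.

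\textbf{Step 1.} First I would verify the analogous bounds on the $H_0^s$-scale for non-negative $s$: that the operators $\nabla_A$, $\nabla_A^*$, $\nabla_A^*\nabla_A$ of respective orders $1, 1, 2$ map $H_0^s \to H_0^{s-k}$ continuously once $s$ exceeds the operator order $k$, and that the inclusion $H_0^{s_1} \hra H_0^{s_2}$ is continuous for $0 \leq s_2 < s_1$. For integer $s$ this is immediate, since these are local differential operators that preserve the space $C_0^\infty$ of sections compactly supported in the interior; by density of $C_0^\infty$ in $H_0^s$ (Definition~\ref{def:H_0}) and continuity on $H^s$, the image lies in the closure of $C_0^\infty$ inside $H^{s-k}$, i.e.\ in $H_0^{s-k}$. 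For fractional $s$ this then follows from the interpolation identity~\eqref{eq:interpolpos0}.

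\textbf{Step 2.} Next I would extend to negative indices by duality. For the inclusion with $s_2 < s_1 \leq 0$, dualize $H_0^{-s_2} \hra H_0^{-s_1}$ from Step 1 and use $H^{-t} = (H_0^t)^*$; the mixed-sign case $s_2 < 0 < s_1$ factors through $L^2$, where $L^2 \hra H^{s_2}$ is obtained by testing against $H_0^{-s_2} \hra L^2$. For $\nabla_A : H^s \to H^{s-1}$ with $s \leq 0$, I would define $\nabla_A \sigma$ as a distribution by the formal-adjoint pairing
\[
\langle \nabla_A \sigma, \tau \rangle := \langle \sigma, \nabla_A^* \tau \rangle, \qquad \tau \in H_0^{1-s}(E \tensor T^*\Sig),
\]
which is consistent with the classical integration-by-parts identity when $\sigma$ is smooth. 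By the duality definition of $H^{s-1}$, the required bound $\Mod{\nabla_A \sigma}_{s-1} \leq c \Mod{\sigma}_s$ reduces to the continuity $\nabla_A^* : H_0^{1-s} \to H_0^{-s}$ supplied by Step 1, since $1 - s \geq 1$. The same construction yields $\nabla_A^* : H^s \to H^{s-1}$ (testing against $H_0^{1-s}$) and $\nabla_A^* \nabla_A : H^s \to H^{s-2}$ (testing against $H_0^{2-s}$, using that $\nabla_A^* \nabla_A$ is formally self-adjoint).

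\textbf{Step 3.} Finally, the remaining fractional cases $0 < s < k$ (where $k$ is the operator order) are handled by interpolating \eqref{eq:interpolpos} between the bound at $s = 0$ coming from Step 2 and the bound at $s = k$ already in hand. The only subtle point I anticipate is the special behavior of $H_0^s$ at half-integer $s$ noted in Remark~\ref{rem:lmspace}; fortunately, the argument is phrased entirely in terms of the complex-interpolation scale built from the integer $H_0^n$, so this technicality is absorbed into \eqref{eq:interpolpos0} and requires no separate treatment.
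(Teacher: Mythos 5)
Your proposal is correct and fills in precisely what the paper elides with the one-line assertion ``Using the above duality'': first establish the $H_0^s$-bounds for $s\ge 0$ so that the dual pairing $\langle \nabla_A\sigma,\tau\rangle=\langle\sigma,\nabla_A^*\tau\rangle$ yields the negative-index cases against test sections in $H_0^{1-s}$, then close the remaining window $0<s<k$ by interpolation between the $s=0$ and $s=k$ endpoints. The handling of the half-integer Lions--Magenes delicacy is correctly delegated to \eqref{eq:interpolpos0}, since the paper's $H_0^{m+\theta}$ is defined directly as the interpolation space $[H_0^m,H_0^{m+1}]_\theta$ rather than as a closure of $C^\infty_0$, so your density argument is only invoked at integer levels where it is legitimate. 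One small point worth flagging: in Step~3 the target spaces $H^{s-k}$ have negative index, so you are implicitly using the extension of \eqref{eq:interpolpos} to the negative-index scale; the paper only records ``by duality the expected interpolation results also hold for $H^{-s}$'' in the sentence \emph{after} the proposition, so strictly speaking this standard fact should be cited (or established) before interpolating in your Step~3. This is a matter of ordering rather than substance.
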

By duality, the expected interpolation results also hold for $H^{-s}$ spaces.

\begin{proposition}[Multiplication Theorem] The map 
\begin{equation}\label{eq:multspace}
H_*^{s_1}(E_1) \tensor H_*^{s_2}(E_2) \lra H_*^{s_3}(E_1 \tensor E_2)
\end{equation} 
is continuous if $s_1+s_2 \geq 0$, $s_3 < \min(s_1,s_2)$ and $s_3 \leq s_1+s_2-\frac{\dim \Sig}{2}$.
\end{proposition}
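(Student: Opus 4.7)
The plan is to establish the estimate first in the base case of nonnegative integer exponents and then bootstrap to the full statement using bilinear complex interpolation and duality.

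First I would reduce to smooth sections by density. For nonnegative integers $s_1,s_2,s_3$ satisfying the hypothesis, the generalized Leibniz rule for $\nabla_A$ gives, for every multi-index $\alpha$ with $|\alpha|\leq s_3$,
\[
\nabla_A^\alpha(\sig_1 \tensor \sig_2) \;=\; \sum_{\beta+\gamma=\alpha} c_{\beta\gamma}\, \nabla_A^\beta \sig_1 \tensor \nabla_A^\gamma \sig_2.
\]
I would then apply H\"{o}lder with $\tfrac{1}{p_\beta}+\tfrac{1}{q_\gamma}=\tfrac{1}{2}$ and the Sobolev embedding on the two-dimensional base $\Sig$ (namely $H^{s-k} \hra L^p$ when $\tfrac{1}{p}\geq \tfrac{1}{2}-\tfrac{s-k}{2}$, together with the $L^\infty$ endpoint from \eqref{eq:embedspace}). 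The resulting system of inequalities on the exponents admits a solution precisely when $|\alpha|\leq s_1+s_2-1$, and the strict hypothesis $s_3<\min(s_1,s_2)$ keeps both $s_1-|\beta|$ and $s_2-|\gamma|$ strictly positive so the borderline cases of the Sobolev embedding can be avoided.

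Next, to pass to fractional nonnegative exponents, I would invoke bilinear complex interpolation of Calder\'on type: a bounded bilinear map between two pairs of interpolation couples extends to a bounded map between their $I_\theta$-interpolates, with operator norm bounded by the corresponding geometric mean. Combined with \eqref{eq:interpolpos} and \eqref{eq:interpolpos0}, this covers all admissible nonnegative real triples. To admit negative exponents, I would use the duality $H^{-s}=(H_0^s)^*$ from \eqref{def:dual}: the statement with $s_3<0$ is equivalent to continuity of the trilinear form $(\sig_1,\sig_2,\tau)\mapsto \int_\Sig(\sig_1\tensor \sig_2,\tau)$ with $\tau \in H_0^{-s_3}$, which, after reassociating factors, reduces to the already established case for a different triple obeying the same hypothesis (the constraint $s_1+s_2\geq 0$ together with the linear inequality $s_3\leq s_1+s_2-\dim\Sig/2$ is symmetric under moving any one factor to the dual side). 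The identical rearrangement handles negative $s_1$ or $s_2$.

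The main obstacle I anticipate is consistency between the $H^s$ and $H_0^s$ versions through duality near the half-integer exponents, where the Lions--Magenes phenomenon of Remark \ref{rem:lmspace} makes $H_0^{\mu+1/2}$ strictly finer than the closure of $C^\infty_0$ in $H^{\mu+1/2}$ and distorts the expected duality pairing. The strict inequality $s_3<\min(s_1,s_2)$ in the hypothesis supplies precisely the margin needed to avoid these endpoint exponents in the rearrangements of Step~3, and the fact that multiplication preserves $H_0^{s_3}$ when the inputs lie in $H_0^{s_i}$ follows because a product of sections vanishing on $\partial\Sig$ to order $k$ vanishes on $\partial\Sig$ to at least that order, which suffices to land in the right boundary-vanishing subspace after checking against the characterization in Remark \ref{rem:lmspace} for non half-integer exponents.
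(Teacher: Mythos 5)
Your overall architecture (integer case via Leibniz plus H\"older and Sobolev embedding, then complex interpolation, then duality for negative indices) matches the spirit of the paper's terse proof sketch, but the paper deliberately routes through the $W^{s,p}$ family (Proposition \ref{prop:multsp}) with the integrability index $p$ as a free parameter, and this is not an optional convenience: it is what makes the interpolation step actually reach the full region claimed in the statement. By confining yourself to $p=2$ you lose a nontrivial strip of exponents.

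Here is the gap concretely. For nonnegative \emph{integers} $(m_1,m_2,m_3)$, the hypothesis $s_3<\min(s_1,s_2)$ forces $m_3\leq \min(m_1,m_2)-1$ (and one checks that, for integers, this already implies $m_3\leq m_1+m_2-2$, so your Leibniz/H\"older/Sobolev step indeed works with no borderline embeddings). Bilinear complex interpolation between such base triples then yields exactly the convex hull of these integer points, which is the region $\{s_1+s_2\geq0,\ s_3\leq \min(s_1,s_2)-1\}$: since $\min$ is concave, any convex combination of triples satisfying $m_3\leq\min(m_1,m_2)-1$ still satisfies $s_3\leq\min(s_1,s_2)-1$. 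This is \emph{strictly smaller} than the region $\{s_1+s_2\geq0,\ s_3<\min(s_1,s_2),\ s_3\leq s_1+s_2-1\}$ in the statement. For instance $(s_1,s_2,s_3)=(1.5,1.5,1.4)$ is admissible by the proposition (and the map $H^{1.5}\cdot H^{1.5}\to H^{1.4}$ is in fact true on a surface), but $1.4>\min(1.5,1.5)-1=0.5$, so it is not a convex combination of valid integer base triples and is unreachable by your interpolation step. Iterating interpolation or passing through duality does not help, because the reachable set remains the same convex hull. So the claim that interpolation ``covers all admissible nonnegative real triples'' is not correct as stated.

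The paper's fix is to vary $p$: H\"older gives the base estimate $L^{p_1}\cdot L^{p_2}\to L^{p_3}$ for a full range of $p$'s, the Leibniz induction lifts this to $W^{m_1,p_1}\cdot W^{m_2,p_2}\to W^{m_3,p_3}$, and interpolating \emph{simultaneously} in the smoothness and integrability indices (using $[W^{m,p},W^{m',p'}]_\theta=W^{\theta m+(1-\theta)m',p''}$) lands on $W^{s,2}=H^s$ for all admissible fractional $s_i$. Alternatively one can stay with $p=2$ but must then prove the fractional case directly, e.g.\ using the double-integral characterization of $W^{s,2}$ for non-integer $s$ quoted at the end of Section~\ref{sec:compfunc}, or via Littlewood--Paley paraproducts; either is genuinely more work than the interpolation you describe. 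There is also a minor borderline issue in your duality step: dualizing $(s_1,s_2,s_3)$ to $(s_1,-s_3,-s_2)$ turns the constraint $s_3<s_1$ into $s_1+s_2>0$, which fails if $s_1+s_2=0$ exactly, so the endpoint $s_1+s_2=0$ of the stated hypothesis needs separate handling.
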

This is a Corollary of the corresponding result on $W^{m,p}$ spaces, Proposition \ref{prop:sobmult}.
\subsection{Uniform operator bounds}\label{subsec:unifbd}
So far, we have used a smooth connection $A$ to define spaces $H^s_*$. The spaces $H^s_*$ are still well-defined for $s \in [-2,2]$ if we use a $H^1$ connection instead. The next proposition shows that different choices of connection produce equivalent norms.
\begin{proposition}\label{prop:Anorm}
  Let $A \in H^1$ be a connection on $P$ (and hence $E$). We assume
  that $B$ is a smooth connection and that the spaces $H^s(E)$ are
  Sobolev completions under the norm $\Mod{\cdot}_s^B$.
  \begin{enumerate}
  \item For $s \in [-1,2]$, the operator $\nabla_A:H^s(E) \to
    H^{s-1}(E)$ is continuous.
  \item For $s \in [-2,2]$, $\Mod{\cdot}_A$ defines a norm and is
    equivalent to $\Mod{\cdot}_B$.
  \end{enumerate}
\end{proposition}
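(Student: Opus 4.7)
The plan is to write $\nabla_A = \nabla_B + [a, \cdot]$ with $a := A - B \in H^{s_0}(\Omega^1(\Sigma, P(\g)))$, then reduce everything to the multiplication theorem \eqref{eq:multspace}. Since $\dim\Sigma = 2$ and $s_0$ is a positive integer, $s_0 \geq 1$, which gives enough regularity on $a$ to multiply it against sections of intermediate Sobolev regularity.

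For part~(1) with $s \in [0, s_0 + 1]$, the term $\nabla_B \sigma$ is bounded in $H^{s-1}$ by definition of the $B$-norm. For the commutator $[a, \sigma]$, the multiplication theorem applied with $(s_1, s_2, s_3) = (s_0, s, s-1)$ gives boundedness provided $s_0 + s \geq 0$, $s - 1 \leq \min(s_0, s)$, and $s - 1 \leq s_0 + s - 1$; in dimension $2$ these reduce to a straightforward check within the stated range. For negative $s \in [-s_0, 0)$ I dualize: the formal adjoint $\nabla_A^* = -\ast \nabla_A \ast$ splits in the same way and maps $H_0^{1-s} \to H_0^{-s}$ by the non-negative case (applied with exponent $1-s \in (1, s_0+1]$), and the definition \eqref{def:dual} of negative-exponent spaces then transfers continuity back to $\nabla_A : H^s \to H^{s-1}$.

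For part~(2) I induct on the integer part of $s \geq 0$. The base case $s = 0$ is immediate since the $L^2$-norm uses only the fibre metric. Assuming equivalence of $\Mod{\cdot}_{s-1}^A$ and $\Mod{\cdot}_{s-1}^B$ on both $E$ and $E \otimes T^*\Sigma$, the identity
\[
(\Mod{\sigma}_s^A)^2 = (\Mod{\sigma}_{s-1}^A)^2 + \Mod{\nabla_A \sigma}_{s-1}^2
\]
combined with part~(1) yields $\Mod{\sigma}_s^A \leq C \Mod{\sigma}_s^B$; the reverse estimate is symmetric, writing $\nabla_B = \nabla_A - [a, \cdot]$ and noting that $a$ has identical $H^{s_0}$ regularity measured with either connection (by the inductive hypothesis at levels below $s_0$). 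Fractional exponents are filled in via complex interpolation \eqref{eq:interpolpos}, and negative exponents by duality. Once the two norms are comparable, positive definiteness of $\Mod{\cdot}^A$ follows immediately.

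The main subtle point is the endpoint $s = s_0 + 1$ of the multiplication theorem, where the condition $s_3 < \min(s_1, s_2)$ is saturated rather than strict. One resolves this by expanding $\nabla^{s_0}[a, \sigma]$ via the Leibniz rule into a sum of terms $[\nabla^i a, \nabla^{s_0 - i} \sigma]$ and estimating each in $L^2$ using $H^{s_0 - i} \cdot H^{i+1} \hookrightarrow L^2$, which holds since $(s_0 - i) + (i+1) = s_0 + 1 \geq \dim\Sigma/2 = 1$. Alternatively, for $s_0 \geq 2$ one has $a \in C^0$ by \eqref{eq:embedspace} and the estimate is elementary; only the integer case $s_0 = 1$ actually requires the Leibniz argument.
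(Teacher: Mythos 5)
Your proof takes the same route as the paper's: decompose $\nabla_A = \nabla_B + [a,\cdot]$ with $a = A - B \in H^{s_0}$, control the commutator by the multiplication theorem, induct on integer Sobolev exponents for the norm equivalence, and extend to fractional and negative exponents by interpolation and duality. You also correctly flag that the stated form of \eqref{eq:multspace}, with its strict inequality $s_3 < \min(s_1,s_2)$, does not literally cover the endpoint $s = s_0+1$ (the paper applies it there without remark); your Leibniz-rule patch closes that gap, noting only that the extreme term $[\nabla^{s_0}a,\sigma]$ should be handled by the embedding $H^{s_0+1}\hookrightarrow C^0$ together with H\"older rather than by another application of \eqref{eq:multspace}, since that product sits at a saturated endpoint as well.
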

\begin{proof}
  Let $a:=A-B \in \Omega^1(X,P(\mathfrak{k}))_{H^{1}_B}$. For $s=1,2$, if $\sigma \in H^s_B(E)$, $\nabla_A \sigma = \nabla_B \sigma +
  [a,\sigma]$. By the multiplication theorem $\Mod{[a,\sigma]}_{s-1}^B
  \leq \Mod{a}_{1}^B \Mod{\sigma}_s^B$. This fact is used to prove that $\Mod{\cdot}^A_s \leq c\Mod{\cdot}^B_s$ for
  $s=0,1,2$. The result is trivial for $s=0$, since both norms are
  just the $L^2$-norms. Assuming the result for $s-1$,
  \begin{equation*}
    \begin{split}
      \Mod{\sigma}^A_s & \leq \Mod{\sigma}^A_{s-1}+\Mod{\nabla_A \sigma}^A_{s-1} \leq c(\Mod{\sigma}^B_{s-1}+ \Mod{\nabla_A \sigma}^B_{s-1}) \\
      & \leq c(\Mod{\sigma}^B_{s-1} + \Mod{\nabla_B \sigma}^B_{s-1} + \Mod{a}_{1}^B \Mod{\sigma}^B_s) \leq c\Mod{\sigma}^B_s.
    \end{split}
  \end{equation*}
  The other direction $\Mod{\cdot}^B_s \leq c\Mod{\cdot}^A_s$ can be
  proved similarly. The result extends to all $s \in [-2,2]$ by
  duality and interpolation. The boundedness of the operator $\nabla_A : H^s \to H^{s-1}$ follows in an obvious way by using the norm $\Mod{\cdot}_s^A$.
\end{proof}
  
Although the topology of the Hilbert spaces $H_*^s(\Sig,E)$ is independent of the choice of connection used to define the norm, the operator norms depend on the connection. However, if the connection
satisfies a curvature bound $\Mod{F(A)}_{L^2}<\kappa$, then the
operator norm bounds depend only on $\kappa$ and not on the choice of
connection. Constants that depend only on $\kappa$ will be denoted
$c_\kappa$. We will also use terms like {\it $c_\kappa$-bounded}, {\it
  $c_\kappa$-isomorphism} etc. to say that the relevant operator norms
are bounded by $c_\kappa$.
The operator norms will be shown to be uniformly bounded using an alternate definition of $H^s$ involving local
trivializations of the principal bundle.
\subsubsection{Local trivialization definition of
  $H^s$-spaces} \label{subsec:loctriv} It is possible to define the
spaces $H^s$ using a local trivialization of the bundle : roughly,
$\Mod{\sig}_s$ will be the sum of its $H^s$-norms in each co-ordinate
patch. Different choices of trivialization would produce equivalent
norms. We will pick a trivialization that would produce a norm that is
$c_\kappa$-equivalent to $\Mod \cdot_s$ using Uhlenbeck's local
theorem stated below.

\begin{lemma}\label{lem:uh}{\rm(Uhlenbeck Compactness, \cite[Lemma 3.5]{Uh:compactness}, \cite[Theorem B]{Weh:Uh})} Suppose $P \to \Sig$ be a principal $K$-bundle on a Riemann surface $\Sig$.
  Given a constant $\kappa>0$, there exists a finite cover
  $\{\U_\alpha\}_\alpha$ of $\Sig$ and constants $c_\kappa$ such that
  for a $H^1$ connection $A$ on $P$ satisfying
  $\Mod{F(A)}_{L^2}<\kappa$, there are local trivializations
  $\tau_\alpha:P|_{\U_\alpha} \to \U_\alpha \times K$ with transition
  functions $g_{\alpha\beta}:\U_\alpha \cap \U_\beta \to K$ such that
  if $(\tau_\alpha^{-1})^*A=d+a_\alpha$,
  $$\Mod{a_\alpha}_{H^{1}(\U_\alpha)} \leq c_\kappa \quad\text{and}\quad
  \Mod{g_{\alpha \beta}}_{H^2(\U_\alpha \cap \U_\beta)} \leq
  c_\kappa.$$
\end{lemma}

We fix a partition of unity $\eta_\alpha$ subordinate to the cover
produced by Uhlenbeck compactness above. Local trivializations of the
principal bundle $P$ induce local trivializations of the associated
vector bundle $E$.  Given a section $\sig \in \Gamma(\Sig,E)$, let
$\sigma_\alpha:= \phi_\alpha \circ \sigma:\U_\alpha \to \R^m$
represent $\sigma|_{\U_\alpha}$ under the above trivialization. For
any $s \in [-2,2]$, define another norm on $H^s(\Sig,E)$ as
\begin{align}\label{eq:trivnorm}
  |\sigma|_s:=\left( \sum_\alpha \Mod{\eta^{1/2}_\alpha\cdot
      \sigma_\alpha}^2_{H^s(\U_\alpha,\R^m)}\right)^{1/2}.
\end{align}
The $L^2$ product corresponding to the norms $|\cdot|$ and
$\Mod{\cdot}$ agree
\begin{equation}
  \label{eq:l2pair}
  \int_X(\sigma',\sigma)dV=\sum_\alpha \int_{\U_\alpha}\eta_\alpha(\sigma'_\alpha, \sigma_\alpha) dV.  
\end{equation}

\begin{remark} \label{rem:normdual} For any $s \geq 0$, the dual of
  the space $(H_0^s,|\cdot|_s)$ with respect to the $L^2$-pairing
  \eqref{eq:l2pair} is $c_\kappa$-isomorphic to
  $(H^{-s},|\cdot|_{-s})$. This is because the constants on the norm
  bounds depend only on the covering $\{\U_\alpha\}_\alpha$ and the
  partition of unity $\eta_\alpha$, both of which are determined by
  $\kappa$.
\end{remark}

\begin{proposition} \label{prop:localtrivnorm} Given $\kappa>0$, there
  are constants $c_\kappa$ so that : if $A$ is a connection on $P$
  satisfying $\Mod{F_A}_{L^2}<\kappa$, for any $s \in [-2,2]$, the
  norms $\Mod{\cdot}^A_s$ and $|\cdot|_s$ are $c_\kappa$-equivalent on
  $H^s(E)$. The norm $|\cdot|_s$ is defined by \eqref{eq:trivnorm} and
  is produced by the trivialization given by Lemma \ref{lem:uh}.
\end{proposition}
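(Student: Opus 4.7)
The plan is to establish the $c_K$-equivalence of $\Mod{\cdot}^A_s$ and $|\cdot|_s$ first for non-negative integer $s \le s_0+1$ by induction, then extend to fractional positive $s$ by complex interpolation, and finally to negative $s \in [-s_0-1,0)$ by duality via Lemma \ref{lem:normdual}. Throughout, the finite cover and trivialization produced by Uhlenbeck compactness (Lemma \ref{lem:uh}) are $c_K$-controlled, so the partition of unity $\{\eta_\alpha\}$ and the transition data, once chosen, contribute only $c_K$ constants.

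The base case $s=0$ is immediate: both $\Mod{\sigma}_0^{A,2}$ and $|\sigma|_0^2=\sum_\alpha [\eta_\alpha^{1/2}\sigma_\alpha]_{L^2(\V_\alpha)}^2$ reduce to the $L^2$ norm of $\sigma$ up to a factor given by the Jacobians $|\det d\tau_\alpha^{-1}|$ and the fact that $\sum_\alpha \eta_\alpha=1$; the Jacobians are $c_K$-bounded above and below by Lemma \ref{lem:uh}. For the inductive step at integer $s=k$ with $1\le k\le s_0+1$, I would use $\Mod{\sigma}_k^A \leq \Mod{\sigma}_{k-1}^A + \Mod{\nabla_A\sigma}_{k-1}^A$ and work in local coordinates, where in a chart $\V_\alpha$ with $\tau_\alpha^*A = d+A_\alpha$ one has $(\nabla_A\sigma)_\alpha = d\sigma_\alpha + A_\alpha\cdot\sigma_\alpha$. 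Introducing cutoffs $\chi_\alpha$ with $\chi_\alpha \equiv 1$ on $\on{supp}(\eta_\alpha^{1/2})$ and $\on{supp}(\chi_\alpha) \subset \V_\alpha$ (both depending only on the Uhlenbeck cover), I would first show $|\sigma|_k$ is $c_K$-equivalent to $\sum_\alpha [\chi_\alpha \sigma_\alpha]_{H^k(\V_\alpha)}$, then apply the Euclidean multiplication theorem together with $[A_\alpha]_{H^{s_0}(\V_\alpha)} \leq c_K$ to estimate $[A_\alpha\cdot\sigma_\alpha]_{H^{k-1}}$ by $c_K [\sigma_\alpha]_{H^k}$; the Leibniz-rule terms involving derivatives of $\eta_\alpha^{1/2}$ and $\chi_\alpha$ are absorbed into lower-order Sobolev norms on $\V_\alpha$, to which the inductive hypothesis applies. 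The reverse inequality $|\sigma|_k \leq c_K \Mod{\sigma}_k^A$ follows the same pattern, using $d\sigma_\alpha = (\nabla_A\sigma)_\alpha - A_\alpha\cdot\sigma_\alpha$.

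For positive non-integer $s \in (0,s_0+1)$, I would write $s=\theta s_1 + (1-\theta) s_2$ with integer endpoints in $[0,s_0+1]$ and use the fact that the identity map is a $c_K$-bounded operator on each $H^{s_i}$ whichever of the two norms one places on domain and codomain. By the interpolation property of the complex functor $I_\theta$ (Definition \ref{def:complexint} and \eqref{eq:interpolpos}), the identity remains $c_K$-bounded on $H^s$, yielding the equivalence at the fractional exponent; the same argument applies to the $H^s_0$ subspaces via \eqref{eq:interpolpos0}.

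For negative $s \in [-s_0-1,0)$, I would apply duality: $\Mod{\sigma}_{-s}$ is by definition the norm dual to $\Mod{\cdot}_{s}$ on $H^s_0$ under the $L^2$ pairing, and Lemma \ref{lem:normdual} identifies the dual of $|\cdot|_s$ (up to $c_K$) with $|\sigma|_{-s}$. Combining with the positive-exponent equivalence already established on $H^s_0$ then gives $\Mod{\sigma}_{-s}$ $c_K$-equivalent to $|\sigma|_{-s}$. The main obstacle I anticipate is the careful bookkeeping in the inductive step, specifically the need to commute the partition-of-unity cutoff past $\nabla_A$ without picking up $A$-dependent (rather than $K$-dependent) constants; the correct treatment requires replacing $\eta_\alpha^{1/2}\sigma_\alpha$ by the auxiliary localization $\chi_\alpha\sigma_\alpha$ when estimating Euclidean Sobolev norms of products with $A_\alpha$, so that the only bounds on $A_\alpha$ used are $[A_\alpha]_{H^{s_0}}\leq c_K$ supplied by Lemma \ref{lem:uh}.
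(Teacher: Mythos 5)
Your proof follows the same three-step structure as the paper: induction on non-negative integers using the Leibniz rule together with the Uhlenbeck bound $[A_\alpha]_{H^{s_0}} \le c_K$ from Lemma \ref{lem:uh}, complex interpolation for fractional positive $s$ (and on the $H^s_0$ subspaces via \eqref{eq:interpolpos0}), and duality through Lemma \ref{lem:normdual} for negative $s$. The auxiliary cutoffs $\chi_\alpha$ you introduce to control $[A_\alpha\cdot\sigma_\alpha]_{H^{k-1}}$ are an inessential variant of the paper's more direct manipulation of $\eta_\alpha^{1/2}\sigma_\alpha$ (which quietly uses the same transition-function bounds to reduce back to $|\sigma|_{s-1}$), so the two arguments are essentially identical.
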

\begin{proof}
  We first prove the result for non-negative integers by
  induction. For $s=0$, $|\sigma|_{L^2} = \Mod{\sigma}_{L^2}$. We
  assume the estimate is true for $s-1$ and prove $|\sigma|_s \leq
  c_\kappa\Mod{\sigma}_s$
  \begin{align*}
    |\sigma|_{H^s(E)}^2&=|\sigma|_{L^2}^2 + \sum_\alpha \Mod{\nabla(\eta_\alpha^{1/2} \cdot \sigma_\alpha)}^2_{H^{s-1}(\U_\alpha,\R^m)} \leq \Mod \sigma_{L^2}^2 + \Mod{\nabla_A \sigma}^2_{H^{s-1}(E)}\\
&\quad + \sum_\alpha \Mod{a_\alpha \times \sigma_\alpha}^2_{H^{s-1}(\U_\alpha,\R^m)} + \sum_\alpha \Mod{(\nabla \eta_\alpha^{1/2}) \cdot \sigma_\alpha)}^2_{H^{s-1}(\U_\alpha,\R^m)}\\
   & \leq \Mod \sigma_{H^s(E)}^2 + c_\kappa|\sigma|^2_{H^{s-1}(E)}
    \leq c_\kappa \Mod \sigma_{H^s(E)}^2.
  \end{align*}
  The other direction i.e. $\Mod{\sigma}_s \leq c_\kappa |\sigma|_s$
  is similar to the proof of Proposition~\ref{prop:Anorm}.  We have
  exact interpolation isomorphisms for both norms $\Mod \cdot$ and
  $|\cdot|$, so the result extends to all positive $s$. By Remark
  \ref{rem:normdual}, it extends to negative $s$ by duality.
\end{proof}
In this norm defined using local trivializations, operator norms do
not depend on $A$. So, using the $c_\kappa$-equivalence, we get the
following result.
\begin{proposition}\label{prop:curvbound}
  Given $\kappa>0$, there exist constants denoted by $c_\kappa$ such
  that if $\Mod{F(A)}_{L^2}^A< \kappa$, then the multiplication
  operator \eqref{eq:multspace} and Sobolev embedding
  \eqref{eq:embedspace} have norm $\leq c_\kappa$ and interpolation
  operators \eqref{eq:interpolpos}, \eqref{eq:interpolpos0} are
  $c_\kappa$-isomorphisms for Sobolev indices in the range $[-2,2]$.
\end{proposition}
\begin{remark}
  In the proof of Proposition \ref{prop:curvbound}, there is an
  additional detail in the bound for the multiplication operator: for
  $(\sigma,\sigma') \mapsto \sigma \tensor \sigma'$, $(\sigma \tensor
  \sigma')_\alpha$ depends on $g_{\beta
    \alpha}(\sigma'_\beta|_{\V_\beta \cap \tau_\beta^{-1}\U_\alpha})$.
  These terms can be bounded using the bound on $g_{\beta \alpha}$.
\end{remark}

\subsection{Time dependent sections}\label{subsec:timedep}

\subsubsection{Sobolev spaces over time intervals} To define Sobolev completions of the space of time-dependent sections, we first define
Sobolev completions of functions from a time interval $[0,T]$ to a
Hilbert space $\H$. The definition is standard. The only new idea is
that we introduce a $T$-dependent scaling. This is for technical
reasons and its usefulness will be pointed out later.
\begin{definition} \label{def:normsum} For any $m \in \Z_{\geq 0}$, $H^m([0,T],\H)$ is the completion of $C^\infty([0,T],\H)$
  in the norm
$$\Mod{\sig}_m:= \left( \sum_{i=0}^m \Mod{T^{-(m-i)}\frac{d^i}{dt^i}f}^2_{L^2} \right)^{1/2}.$$
For non-integral indices, $H^r$ is defined by interpolating between
neighbouring integers. For negative indices, Sobolev spaces are defined as duals\linebreak $H^{-r}([0,T],\H):=(H_0^r([0,T],\H))^*$ with respect to the $L^2$-pairing, which we define as $\langle f,g \rangle_{L^2} \mapsto \int_0^{T}\lan f(t),g(T-t)\ran_{\H}dt$.
\end{definition}
Alternately, this norm can be defined by Fourier transform, also using
a $T$-scaling.
\begin{definition}\label{def:fourier}{\rm(Fourier transform definition
    of $H^s([0,T],\H)$)} For any $s \in \R$,
$$\Mod{f}_{H^s([0,T],\H)}:=\inf \Mod{(T^{-2}+\tau^{2})^{s/2}\hat{F}(\tau)}_{L^2},$$ where the infimum is taken over all smooth $F:\R \to \H$ that restrict to $f$ in $[0,T]$.
\end{definition}
We need another subspace here.
\begin{definition} [$H_P^s$] Let $C^\infty_P([0,T],\H)$ be the
  subspace of smooth functions that are supported away from $t=0$
  (i.e. all derivatives vanish at $t=0$). For a positive integer $m$,
  $H^m_P:=$ closure of $C^\infty_P$ in $H^s$. The definition is
  extended to non-integers by interpolation and to negative numbers by
  duality : $H^{-s}_P:=(H^s_P)^*$ under the pairing $\langle f,g
  \rangle \mapsto \int_0^{T}\lan f(t),g(T-t) \ran_{\H}dt$.
\end{definition}
\begin{remark} The spaces $H^s_0$, $H^s_P$ and $H^s$ coincide if $0 \leq s< \hh$. By duality,
  $H^s_P=H^s$ for $-\hh <s \leq 0$ also. For $s\leq -\hh$, $H^s_P$ is
  a formal space whose elements may not correspond to distributions.
\end{remark}
\begin{remark} Continuing Remark \ref{rem:lmspace}, the space
  $H^{\mu+1/2}_P([0,T])$ can alternately be defined as the subspace of $H^{\mu+1/2}([0,T])$ consisting of elements $f$ for which $t^{-1/2}f^{(\mu)}
  \in L^2$. The space $H^{\mu+1/2}_P([0,T])$ has the norm
$$\Mod{f}_{H^{\mu+1/2}_P}=\left (\Mod{f}_{H^{\mu+1/2}}^2+\Mod{t^{-1/2}f}_{L^2}^2 \right )^\hh,$$
see Theorem 11.7, Chapter 1 in \cite{LM1}.
The topology is finer than that of $H^{\mu+1/2}$, so it is not closed
in $H^{\mu+1/2}$. Similarly, the norm of $H^{\mu+1/2}_0([0,T])$ is
equivalent to
$$\Mod{f}_{H^{\mu+1/2}_0}=\left (\Mod{f}_{H^{\mu+1/2}}^2+\Mod{t^{-1/2}f}_{L^2}^2 + \Mod{(T-t)^{-1/2}f}_{L^2}^2 \right )^\hh.$$
\end{remark}

We now state some properties of the spaces $H^s([0,T],\H)$. For $s_1>s_2$, the inclusion
\begin{equation}\label{eq:includetime}
  H_*^{s_1}([0,T],\H)\lra H_*^{s_2}([0,T],\H)
\end{equation}
is compact and has norm $cT^{s_1-s_2}$. The advantage of
the scaling is that by choosing small $T$, we have a handle on how
small a perturbation this operator can cause.
A bounded linear map $L:\H \to \H'$ between two Hilbert spaces, induces the following continuous operator
\begin{equation}\label{eq:bundlemap}
  H_*^s([0,T],\H) \lra H_*^s([0,T],\H').
\end{equation}
Its norm is determined by $\Mod{L}$. For $r>\hh$, the Sobolev embedding
\begin{align}\label{eq:embedtime}
   H^s([0,T],\H) \hra C^0([0,T],\H)
\end{align}
is a compact operator with norm bounded by $cT^{s-\hh}$. The multiplication theorem follows from the multiplication theorem for
real valued functions (Proposition \ref{prop:sobmult}). The multiplication operation
\begin{equation}\label{eq:multiplicationtime}
  H_*^{s_1}([0,T],\H) \tensor H_*^{s_2}([0,T],\H') \lra H_*^{s_3}([0,T],\H \tensor \H')
\end{equation}
is continuous if $s_1+s_2 \geq 0$, $s_3 < s_1+s_2-1/2$ and $s_3 \leq
\min(s_1,s_2)$. It has norm $\leq cT^{s_1+s_2-s_3-1/2}$. 
As in Section \ref{subsec:spacesec}, there is an isomorphism of interpolation spaces 
\begin{equation}\label{eq:interpolationtime}
 H^{\theta s_0 + (1-\theta)s_1}([0,T],\H) \lra I_\theta(H^{s_0}([0,T],\H),H^{s_1}([0,T],\H)).
\end{equation} 

\begin{lemma}[Integration]\label{lem:integrationtime} The differentiation operator
  $\ddt:H_P^{s+1} \to H_P^s$ is invertible, the inverse is given by
  the integration operator $\int_0$.
\end{lemma}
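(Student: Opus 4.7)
The plan is to establish invertibility in three stages: first for non-negative integer $s$ by a direct calculation on the dense subspace $C^\infty_P$, then for non-integer $s \geq 0$ by interpolation, and finally for $s < 0$ by duality.

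For integer $s \geq 0$, I first check that if $f \in C^\infty_P$ vanishes on a neighborhood $[0,\epsilon]$ of $0$ then so does $\int_0 f$, so $\int_0$ sends $C^\infty_P$ into itself; conversely, for $g \in C^\infty_P$ one has $g(0) = 0$, hence $\int_0^t g'(u)\,du = g(t)$, giving $\int_0 \circ \ddt = \Id$ on $C^\infty_P$, while $\ddt \circ \int_0 = \Id$ is immediate. For the norm bound I expand
\[
\Mod{\int_0 f}_{s+1}^2 = \sum_{i=0}^{s+1} \Mod{T^{-(s+1-i)}\frac{d^i}{dt^i}\int_0 f}_{L^2}^2.
\]
The $i \geq 1$ summand equals $\Mod{T^{-(s-i+1)}\frac{d^{i-1}}{dt^{i-1}}f}_{L^2}$, which is exactly a summand of $\Mod{f}_s^2$, while the $i=0$ summand is controlled via the Hardy-type inequality $\Mod{\int_0 f}_{L^2([0,T])} \leq T\Mod{f}_{L^2([0,T])}$ (one line of Cauchy-Schwarz), yielding $\Mod{T^{-(s+1)}\int_0 f}_{L^2} \leq T^{-s}\Mod{f}_{L^2}$, which is the $i=0$ summand of $\Mod{f}_s^2$. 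The reverse bound $\Mod{\ddt f}_s \leq \Mod{f}_{s+1}$ is immediate from definition \ref{def:normsum}, and density of $C^\infty_P$ extends both operators and the composition identities to the completed spaces.

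For non-integer $s \geq 0$, the interpolation isomorphism \eqref{eq:interpolationtime} between consecutive integer cases upgrades $\ddt$ and $\int_0$ to bounded operators at intermediate indices, and the identities $\int_0 \circ \ddt = \Id$ and $\ddt \circ \int_0 = \Id$ transfer by continuity.

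For $s < 0$ I argue by duality, using the pairing $\langle f, g \rangle = \int_0^T f(t) g(T-t)\,dt$. A Fubini swap,
\[
\int_0^T \Big(\int_0^t f(u)\,du\Big) g(T-t)\,dt = \int_0^T f(u)\Big(\int_0^{T-u} g(s)\,ds\Big)\,du,
\]
shows that $\int_0$ is self-transpose; and a single integration by parts (using vanishing at $t=0$ of $C^\infty_P$-elements) shows that $\ddt$ is self-transpose. Transposing the isomorphism $\ddt: H_P^{s+1} \to H_P^s$ already established for $s \geq 0$ therefore yields the isomorphism $\ddt: H_P^{-s} \to H_P^{-s-1}$ with inverse $\int_0$, covering all indices $\leq -1$. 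The residual gap $-1 < s < 0$ is closed by one more interpolation between the $s = -1$ and $s = 0$ endpoints. The main place where care is required is this duality step, particularly around the half-integer Lions-Magenes exponent (remark \ref{rem:lmspace}); this is handled automatically because the definition of $H_P^s$ for $s < 0$ is precisely the dual of $H_P^{-s}$ under the reflected pairing, so no separate bookkeeping is needed.
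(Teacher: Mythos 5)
Your proof is correct and follows the same three-stage outline as the paper's own argument (direct verification for non-negative integers, then interpolation, then duality), only spelled out in much greater detail; the paper compresses all of this into a single sentence. The details you supply — the index shift matching the $i\geq 1$ summands, the Hardy estimate for the $i=0$ summand (which explains why the $T$-scaling in Definition~\ref{def:normsum} is exactly what makes $\int_0$ bounded uniformly in $T$), the Fubini and integration-by-parts checks that $\int_0$ and $\ddt$ are self-transpose under the reflected pairing, and the extra interpolation step to close the window $-1<s<0$ — are all sound and consistent with how the paper defines $H_P^s$ (closure of $C^\infty_P$ at integers, interpolation for fractional $s\geq 0$, duality for $s<0$). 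No gaps.
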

\begin{proof} The integration operator $f \mapsto \int_0 f(t)dt$ defined
  on $C^\infty_P([0,T])$ extends to a bounded
  operator $\int_0:H_P^n \to H_P^{n+1}$ for integers $n \geq 0$, using
  the Definition \ref{def:normsum} of the norm. The result follows by
  interpolation and duality.
\end{proof}
\begin{remark} For $s>-\hh$, the integration operator $\int_0:H^s_P
  \to H^{s+1}_P$ corresponds to ``real integration''. Otherwise it is
  a formal operator. This ties in with the fact that for $f \in H^s$,
  one can evaluate $f(0)$ only if $s>\hh$.
\end{remark}
\subsubsection{Mixed spaces}
We can define the following mixed spaces to describe time-dependent
sections of vector bundles.
\begin{definition} For any real $r$ and $s$,
  \begin{align*}
    H^{r,s}(\Sig \times [0,T],E)&=H^r([0,T],H^s(\Sig,E))\\
    H_{0,0}^{r,s}(\Sig \times [0,T],E)&=H_0^r([0,T],H_0^s(\Sig,E))\\
    H_{P,}^{r,s}(\Sig \times [0,T],E)&=H_P^r([0,T],H^s(\Sig,E)), \quad etc.
  \end{align*}
\end{definition}
If the spaces $H^s_*(E)$ are defined using a connection $A \in H^1$, then, $H^{r,s}_*$ is well-defined for all $r$ and $s \in
[-2,2]$. If there is a curvature bound $\Mod{F(A)}_{L^2}^A \leq
\kappa$, the uniform $c_\kappa$-bounds on operator norms extend in expected ways to mixed Sobolev spaces.
For example, the
multiplication map
\begin{equation}\label{eq:multiplicationmixed}
  H_*^{r_1,s_1}([0,T],E_1) \tensor H_*^{r_2,s_2}([0,T],E_2) \lra H_*^{r_3,s_3}([0,T],E_1 \tensor E_2)
\end{equation}
is well-defined and continuous if $r_1+r_2$, $s_1+s_2 \geq 0$, $r_3
\leq \min(r_1,r_2,r_1+r_2-\hh)$ and $s_3 \leq
\min(s_1,s_2,s_1+s_2-1)$. It has norm $\leq c_\kappa
t^{r_1+r_2-r_3-1/2}$.

\subsection{Heat equation}\label{subsec:heateqn}

At the center of solving the flow problem, lies the problem of
uniformly bounding the solution of a parabolic differential equation
on the space of sections of a vector bundle $E$. Throughout this
section, we fix a unitary connection $A \in H^1$ on the bundle $E$. We
consider the Laplacian operator $\Delta_A=\nabla_A^*\nabla_A$ on the
sections of $E$ and the parabolic operator $\ddt +\Delta_A$ on the
space of time-dependent sections of $E$.  Assuming a curvature bound
$\Mod{F_A}_{L^2}<\kappa$, we prove certain uniform $c_\kappa$-bounds
on the solution of the heat equation \eqref{eq:heat} below.  The heat
equation is solved using standard techniques (see Evans
\cite{Evans}), but we present the details in order to prove uniform $c_\kappa$-bounds on the solution. We use the operator norms
$\Mod{\cdot}_s:=\Mod{\cdot}_s^A$ for the sections of $E$.

\subsubsection{Laplacian equation}
Suppose $f \in \Gamma(\Sig,E)$ is a section. The Dirichlet elliptic boundary value problem is
\begin{equation}\label{eq:heat}
  \begin{cases}
      (\Id + \Delta_A )\sigma=f  &\text{on $\Sig$}\\
      \sigma=0 & \text{on $\partial \Sig$},
    \end{cases} 
\end{equation}
where  $\sig \in \Gamma(\Sig,E)$.
Recall that $H^s_\partial := \{ \sigma \in H^s: \sigma=0$ on
$\partial \Sig \}$ for $s>\hh$, and $H^s_\partial:=H^s_0$ for
$\hh < s <\frac{3}{2}$.
\begin{proposition} \label{prop:ellipinv} The operator 
$\Id + \Delta_A :
  H_\partial^{s+1}(\Sig,P(\mathfrak{k})) \to H^{s-1}(\Sig,P(\mathfrak{k}))$ is invertible for $s \in (-\hh, 1]$.
\end{proposition}
\begin{proof} First, we consider $s\!=\!0$, i.e. $\Id \!+\! \Delta_A \!:\!
  H_\partial^1 \!\to\! H^{-1}$. For any $\sig, \sig' \in H^1_\partial$,
  \begin{align*}
\lan(1+ \Delta_A)\sigma, \sigma'\ran_{L^2(\Sig)}&=\lan \sigma,\sigma'\ran_{L^2}+\lan \nabla_A \sigma,\nabla_A \sigma'\ran_{L^2}\\
&=\lan \sigma,\sigma'\ran_{H^1} \leq \Mod{\sigma}_{H^1}\Mod{\sigma'}_{H^1}.
\end{align*}
Therefore, $\Mod{(1+ \Delta_A)\sigma}_{H^{-1}}=\Mod{\sigma}_{H^1}$. So,
the operator is injective. It is onto by the Riesz representation
theorem on $H_0^1$. For any $\tau \in H^{-1}$, there exists $\sigma
\in H^1$, so that $\lan \tau, \sigma'\ran_{L^2}=\lan \sigma,\sigma'\ran_{H^1}$ for all
$\sigma' \in H^1_\partial$. Then, $(1+ \Delta_A)\sigma =\tau$.

Injectivity of the operator for $s=0$ implies injectivity for $s=1$. We
know that, for any smooth connection $B$, the operator
$1+\Delta_B$ is onto. Let $a=B-A$, then
$$(1+\Delta_B)\sigma - (1+\Delta_A)\sigma = *[a \wedge *\nabla_A \sigma] + \nabla_A ^*[a, \sigma] + *[a \wedge *[a,\sigma]].$$ 
Using multiplication theorem, the right hand side is a compact
operator. So, $(1+\Delta_A)$ is Fredholm with index $0$, and
so it is onto.

The result extends to all $s \in [0,1]$ by interpolation. Dualizing
the\linebreak map gives $(1 + \Delta_A)^{-1} : (H_\partial^{s+1})^*
\to (H^{s-1})^*$. For $s \in (-\hh,\hh)$,
$(H_\partial^{s+1})^*=\linebreak (H_0^{s+1})^*=H^{-s-1}$ and
$(H^{s-1})^*=H_0^{-s+1}=H_\partial^{-s+1}$ and the result follows for
$s \in (-\hh,s_0]$.
\end{proof}

\begin{proposition} \label{prop:laplaceinv} In Proposition \ref{prop:ellipinv}, the inverse map $(\Id+\Delta_A)^{-1}:\linebreak H^{s-1} \to H_\partial^{s+1}$ has norm $\leq c_\kappa$.
\end{proposition}
For the proof, we use an elliptic regularity result in Euclidean space
(Ch.~2, Theorem 5.1 \cite{LM1}): Let $V \subseteq \R^n$ be a bounded
open set, and $L$ be a smooth elliptic operator on $V$. For $m \in \Z_{\geq 0}$,
\begin{equation}\label{eq:euclideanelliptic}
  \Mod{u}_{H^{m+2}(V)} \leq c(\Mod{Lu}_{H^m(V)}+ \Mod{ru}_{H^{m+3/2}(\partial V)} + \Mod u_{H^{m+1}(V)}),
\end{equation}
where $r$ denotes restriction of a function to the boundary $\partial
V$.
\begin{proof}[Proof of Proposition \ref{prop:laplaceinv}] We work with local trivializations described by Lemma
  \ref{lem:uh}. For a section $\sig:\Sig \to E$, on a chart
  $\U_\alpha$, 
$$(\Delta_A \sig)_\alpha = \Delta \sig_\alpha + [d \sig_\alpha, A_\alpha] + [\sig_\alpha, dA_\alpha] + [A_\alpha , [A_\alpha, \sig_\alpha]].$$
Assume, $\sig|_{\partial \Sig}=0$. Then, 
$(\eta_\alpha^{1/2} \sig)_\alpha$ vanishes on $\partial \U_\alpha$. 
Using \eqref{eq:euclideanelliptic}, on each $\U_\alpha$, we have
\begin{align*}
  \Mod{\eta_\alpha^{1/2} \sig_\alpha}_{H^s} & \leq c(\Mod{(\Id + \Delta) \eta_\alpha^{1/2} \sig_\alpha}_{H^{s-2}} + \Mod{\eta_\alpha^{1/2} \sig_\alpha}_{H^{s-1}})\\
  & \leq c\Mod{((\Id+\Delta_A) \eta_\alpha^{1/2} \sig)_\alpha}_{H^{s-2}} +
  c_\kappa\Mod{\eta_\alpha^{1/2} \sig_\alpha}_{H^{s-1}}.
\end{align*}
Since the norms 
$$\sig \mapsto \Mod \sig_{H^s(\Sig)},\quad \sig \mapsto
\sum_\alpha\Mod{\eta_\alpha^{1/2} \sigma_\alpha}_{H^s(\U_\alpha)}\quad \text{and}\quad \sig \mapsto
\sum_\alpha\Mod{\sigma_\alpha}_{H^s(\U_\alpha)}$$
are equivalent, we get : if
$\sig|_{\partial \Sig}=0$, then
$$\Mod \sig_{H^s(\Sig)} \leq c_\kappa( \Mod{(\Id+\Delta_A)\sig}_{H^{s-2}(\Sig)}+\Mod \sig_{H^{s-1}(\Sig)}).$$
The operator $(\Id+\Delta_A)^{-1}:H^{-1} \to H_\partial^1$
has norm 1. By induction, we get the result for all non-negative
integers $s$. As in the proof of Proposition \ref{prop:ellipinv}, we
get the result for all $s \in (-\hh, 2]$.
\end{proof}

The operator $(\Id+\Delta_A)$ is positive, self-adjoint and unbounded on $L^2$. We will now define its negative and fractional powers $(\Id + \Delta_A)^s$. Further, we define a family of spaces $\F^s \subseteq H^s$ such that $(\Id + \Delta_A)^s:\F^s \to L^2$ is a bounded isomorphism.
\begin{definition} For $-\thh<s<\hh$, $\F^s:=H^s$ and for $\hh<s \leq 2$, $\F^s:=H^s_\partial$.
\end{definition}
We know that the maps $(\Id+\Delta_A):\F^s \to \F^{s-2}$ are
isomorphisms. We next extend the result to fractional powers of
$(\Id+\Delta_A)$.
\begin{proposition}\label{prop:fraciso} For any $r,s \in \R$ such that $s,s+2r \in (-\thh,2]\bs \{\hh\}$, the map $(\Id+\Delta)^r:\F^{s+2r} \to \F^s$ is a
  $c_\kappa$-isomorphism.
\end{proposition}
To prove this result, we first need to show that $\{\F^s\}_s$ is a family
of interpolation spaces.
\begin{lemma}\label{lem:Finterpolate} For
  $\theta \in (0,1)$, $\F^{2\theta}$ is $c_\kappa$-isomorphic to $[L^2,H^2_\partial]_\theta$.
\end{lemma}
For the proof, we need the following result, which is Proposition 2.1 in Lions-Magenes \cite{LM1}.
\begin{lemma}\label{lem:dual_half}
  Let $H$ be a Hilbert space, whose dual is identified to itself via
  $\langle\cdot,\cdot\rangle_H$. Let $V \subseteq H$ be a dense
  subspace, and $V'$ be its dual via
  $\langle\cdot,\cdot\rangle_H$. Then, $V \subseteq H \subseteq V'$
  are dense inclusions and $[V,V']_{1/2}=H$.
\end{lemma}

\begin{proof}[Proof of Lemma \ref{lem:Finterpolate}]
  We apply Lemma \ref{lem:dual_half} with $V=H^2_\partial$, $H=H^1_0=\F^1$. The dual $V'$ is $c_\kappa$-equivalent to $L^2$. This is because $\langle
  u,(\Id+\Delta_A)v\rangle_{L^2}=\langle u,v\rangle_{H^1}$ for all $u,v
  \in H^2_\partial$ and so,
$$\Mod{u}_{V'}=\sup_{v \in H^2_\partial}\frac{\langle u,v\rangle_{H^1}}{\Mod{v}_{H^2_\partial}} = \sup_{v \in H^2_\partial} \frac{\langle u,(\Id+\Delta_A)v\rangle_{L^2}}{\Mod{(\Id+\Delta_A)v}_{L^2}}=\Mod{u}_{L^2},$$
where all the equalities mean $c_\kappa$-equivalences. Therefore,
$[H^2_\partial,L^2]_\hh=H^1_0$. The result follows by using the facts $[H^1_0,L^2]_\theta=H^{1-\theta}_0$ and
$[H^2_\partial,H^1_0]=H^{2-\theta}_\partial$, and by the reiteration
Theorem for interpolation.
\end{proof}
For positive self-adjoint operators, fractional powers of the operator
are well-defined and these behave well on interpolation spaces. The following result is a slight variation of Theorem 14.1, Chapter 1 in Lions-Magenes \cite{LM1}.
\begin{lemma} \label{lem:selfad} Let $i:X \hra Y$ be a compact inclusion of Hilbert spaces such that the image $i(X)$ is dense in $Y$. Suppose $\Lam:Y \to Y$ is  an unbounded
  positive self-adjoint operator on $Y$, whose restriction $\Lam:X \to Y$ is bounded and invertible. Then, for any $\theta \in [0,1]$, the map $\Lam^{1-\theta}:[X,Y]_\theta \to Y$ is bounded and invertible. The norms of $\Lam^{1-\theta}$ and $\Lam^{-1+\theta}$ are bounded by $c(\Mod{\Lam},\Mod{\Lam^{-1}})$.
\end{lemma}
\begin{proof} We first show that the operator $\Lam^z$ is well-defined
  for $z \in \C$. The composition $Y \xrightarrow{\Lam^{-1}} X
  \xrightarrow{i} Y$ is self-adjoint, positive and compact. Therefore,
  $Y$ has an orthonormal basis of eigen-sections of $\Lam$. Further,
  since all the eigen-values are positive, $\Lam^z$ is well-defined on
  $Y$.

We next show that $\Lam^{1-\theta}:[X,Y]_\theta \to Y$ is bounded.
For any $a \in [X,Y]_\theta$, consider a holomorphic function $f \in \H(X,Y)$ on the strip $\{z:\on{Re}(z) \in [0,1]\}$ such that $f(\theta)=a$
and $\Mod{f}_{\H(X,Y)} \leq 2\Mod{a}_{[X,Y]_\theta}$. Recall that $\eta \mapsto f(i\eta)$ and $\eta \mapsto f(1+i\eta)$ are bounded in $L^\infty(\R,X)$ and $L^\infty(\R,Y)$ respectively. Define $g(z):=\Lam^{-z}f(z)$ on the strip. By the boundedness of $\Lam^{-1}:X \to Y$, both $\eta \mapsto g(i\eta)$ and $\eta \mapsto g(1+i\eta)$ are bounded in $L^\infty(\R,X)$. Therefore $g$ is a bounded map from the strip to $X$ and
by the three-lines theorem, 
\begin{align*}
  \Mod{g}_{L^\infty(\theta + i\R,X)} &\leq \Mod{g}_{L^\infty(i\R,X)}^{1-\theta}\Mod{g}_{L^\infty(1 + i\R,X)}^\theta \\
&\leq (1-\theta)\Mod{g}_{L^\infty(i\R,X)} + \theta \Mod{g}_{L^\infty(1 + i\R,X)}.
\end{align*} 
Hence,
\begin{align*}
\Mod{\Lam^{-\theta}a}_X &\leq (1-\theta)\Mod{\Lam^{-1}}\cdot\Mod{f}_{L^\infty(i\R,Y)} + \theta \Mod{f}_{L^\infty(1 + i\R,X)}\\
& \leq c\Mod{f}_{\H(X,Y)} \leq 2c\Mod{a}_{[X,Y]_\theta}.  
\end{align*}
This proves that $\Lam^{-\theta}:[X,Y]_\theta \to X$ is bounded and so, the same is true for $\Lam^{1-\theta}=\Lam \circ \Lam^{-\theta}:[X,Y]_\theta \to Y$.

For the inverse map, consider $y \in Y$ and define $f \in \H(X,Y)$ as $f(z):=\Lam^{-z}y$. Then,
$$\Mod{\Lam^{-1+\theta}y}_{[X,Y]_\theta} \leq \Mod{f}_{\H(X,Y)}=\Mod{y}_Y + \Mod{\Lam^{-1}y}_X \leq c\Mod{y}_Y.\vspace{-2em}$$
\end{proof}

\begin{proof}[Proof of Proposition \ref{prop:fraciso}] 
We show that $(\Id+\Delta_A)^\theta:\F^{2\theta} \to L^2$ is a
  $c_\kappa$-\linebreak isomorphism for $0<\theta<1$. The other cases of the
  Theorem can be proved by compositions. This result follows by applying Lemma \ref{lem:selfad} with $X:=H^2_\partial(\Sig,P(\mathfrak{k}))$, $Y:=L^2(\Sig,P(\mathfrak{k}))$ and $\Lam:=\Id+\Delta_A$. The conclusion of the\linebreak Lemma is that for any $\theta \in [0,1]$, $(\Id+\Delta_A)^\theta:[H^2_\partial,L^2]_\theta \to L^2$ is a $c_\kappa$-\linebreak isomorphism. By Lemma \ref{lem:Finterpolate}, $[H^2_\partial,L^2]_\theta=\F^{2\theta}$, for $\theta \in [0,1]\bs \{\hh\}$, which proves the Proposition.
\end{proof}

The following is a consequence of Proposition \ref{prop:fraciso}.
\begin{proposition}\label{prop:eignorm} {\rm(A norm via eigen-sections of the Laplacian)}
  The eigen-sections $\{\sigma_i\}_{i \in I}$ of the Laplacian
  $\Delta_A$ form an orthonormal basis of\linebreak $L^2(\Sig,P(\mathfrak{k}))$. The
  eigen-sections $\sig_i$ are in $H^2_\partial$ and satisfy $\Delta_A
  e_i=\lambda_i e_i$.  For $s \in (-\thh,2]$ and $\sigma \in \F^s$,
  $\sig \mapsto \left(\sum_{i \in I} (1+\lambda_i)^s
    (\sigma,e_i)^2_{L^2} \right) ^{1/2}$ is a norm on $\F^s$, which is
  $c_\kappa$-equivalent to $\Mod{\cdot}_s^A$.
\end{proposition}
\begin{proof}
  The map $(\Id+\Delta_A)^{-1} : H^{-1} \to H_0^1$ is well-defined and
  bounded, and the inclusions $L^2 \hra H^{-1}$ and $H^1_0 \hra L^2$
  are compact. Therefore $(\Id+\Delta_A)^{-1}:L^2 \to L^2$ is a
  compact self-adjoint positive operator. So, it has a complete
  orthonormal system $\{e_i\}_{i \in I}$ of eigensections. These are
  eigen-sections for $\Delta_A$ also. By elliptic regularity, $e_i \in
  H^{2}_\partial$. By
 Proposition \ref{prop:fraciso}, for $s \in (-\thh,2]$ and $\sigma \in \F^s$,
  $c_\kappa^{-1}\Mod{\sigma}_s \leq \Mod{(I+\Delta_A)^{s/2}
    \sigma}_{L^2} \leq c_\kappa \Mod{\sigma}_s$. Therefore, 
  $\sig \mapsto \Mod{(I+\Delta_A)^{s/2} \sigma}_{L^2}$, which is same as $\sig \mapsto \left( \sum_{i \in I}
    (1+\lambda_i)^s (\sigma,e_i)^2_{L^2} \right)^{1/2}$, is a norm on~$\F^s$. 
\end{proof}
\begin{corollary}\label{cor:eignormtime}{\rm(Eigen-section norm for time-dependent sections)} A time-dependent section $\sig \in
  H^r([0,T],\F^s)$ can be written as $\sig=\sum_{i \in
    I}\sig_i(t) e_i$,\linebreak where $\sig_i \in H^r([0,T])$ and
  \begin{align}\label{eq:rseqnorm}
    c_\kappa^{-1}\Mod{\sig}_{r,s} \leq \left( \sum_{i \in
        I}(1+\lambda_i)^s\Mod{\sig_i}^2_{H^r([0,T])} \right)^{1/2} \leq
    c_\kappa \Mod{\sig}_{r,s}.
  \end{align}
\end{corollary}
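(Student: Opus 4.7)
The plan is to transport the pointwise-in-time equivalence of Proposition \ref{prop:eignorm}(b) to the mixed norm by exploiting that $H^r([0,T],\cdot)$ is functorial in its Hilbert space target. Define the coefficient map
\begin{equation*}
T_s: \F^s \to \ell^2(I), \qquad \sig \mapsto \bigl((1+\lambda_i)^{s/2}(\sig,e_i)_{L^2}\bigr)_{i \in I}.
\end{equation*}
By Proposition \ref{prop:eignorm}(b), $T_s$ is a $c_K$-isomorphism of Hilbert spaces, so by \eqref{eq:bundlemap} it lifts to a $c_K$-isomorphism $\widetilde{T_s}: H^r([0,T],\F^s) \to H^r([0,T],\ell^2(I))$. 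Given $\sig \in H^r([0,T],\F^s)$, the coefficients $\sig_i(t) := (\sig(t),e_i)_{L^2}$ are obtained by composing $\widetilde{T_s}\sig$ with the bounded coordinate projection $\ell^2(I) \to \R$, so $\sig_i \in H^r([0,T])$ and the expansion $\sig = \sum_i \sig_i(t)e_i$ converges in $H^r([0,T],\F^s)$.

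For the norm equivalence \eqref{eq:rseqnorm}, first take $r \geq 0$ an integer. Since each $e_i$ is $t$-independent, $\partial_t^k \sig(t) = \sum_i \sig_i^{(k)}(t)e_i$ for $0 \leq k \leq r$, so applying Proposition \ref{prop:eignorm}(b) at each fixed $t$ together with Fubini gives
\begin{align*}
\Mod{\sig}_{r,s}^2 &= \sum_{k=0}^r T^{-2(r-k)} \int_0^T \Mod{\partial_t^k\sig(t)}_s^2\, dt \\
&\leq c_K \sum_{k=0}^r T^{-2(r-k)} \int_0^T \sum_i (1+\lambda_i)^s |\sig_i^{(k)}(t)|^2 \, dt \\
&= c_K \sum_i (1+\lambda_i)^s \Mod{\sig_i}_{H^r([0,T])}^2,
\end{align*}
with the reverse inequality by the same argument. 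For fractional $r > 0$, both sides of \eqref{eq:rseqnorm} are $I_\theta$-interpolants of the corresponding integer-indexed expressions: on the left by \eqref{eq:interpolationtime}, and on the right because complex interpolation commutes with weighted $\ell^2$-direct sums. Since the interpolation isomorphisms are $c_K$-controlled, the equivalence propagates to all $r \geq 0$.

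For $r < 0$, dualize using the pairing $\langle f,g \rangle = \int_0^T (f(t),g(T-t))_{L^2}\, dt$, which intertwines the eigenexpansion in $\F^s$ with the coordinate-wise pairing on $\ell^2(I)$-valued time functions. From the definition $H^{-r}([0,T],\K) = (H_0^r([0,T],\K))^*$ and the case $-r > 0$ already handled, the norm of $\sig \in H^r([0,T],\F^s)$ matches the right-hand side of \eqref{eq:rseqnorm} after dualizing in both the time and spatial indices. The main item requiring care will be verifying that complex interpolation and duality commute with the weighted $\ell^2$-direct sum structure; but this reduces, via the isometries $\ell^2_s \to \ell^2_{s'}$ given by multiplication by $(1+\lambda_i)^{(s'-s)/2}$, to the standard unweighted statements about $\ell^2$-valued Sobolev spaces.
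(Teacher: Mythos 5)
Your proof is correct and follows essentially the same route as the paper's: express the coefficients $\sig_i$ via a bounded linear functional on $\F^s$ and lift it to $H^r([0,T],\cdot)$ using \eqref{eq:bundlemap}, then verify the norm equivalence for integer $r\ge 0$ by direct computation (via Definition \ref{def:normsum}) and extend to all $r$ by interpolation and duality. Packaging the argument around the $c_K$-isomorphism $T_s:\F^s\to\ell^2(I)$ is a clean way to organize the same calculation, and your observation that interpolation in $r$ is applied with the target Hilbert space held fixed (so \eqref{eq:interpolationtime} applies directly) disposes of the fractional case without needing any genuinely new interpolation fact.
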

\begin{proof} The components $\sig_i$ are given by
  $\sig_i:=(\sig,e_i)_{L^2(\Sig)}$. The operator\linebreak $\F^s(E) \to \R$
  mapping $\eta \mapsto (1+\lambda_i)^s(\eta,e_i)_{L^2(\Sig)}$ is
  bounded. By \eqref{eq:bundlemap}, it induces a bounded operator
  between $H^r_*(\F^s) \to H^r_*$ as well. Therefore, $\sig_i$ is in
  $H^r_*([0,T])$. The norm bound \eqref{eq:rseqnorm} follows from the
  norm bound in Proposition \ref{prop:fraciso} and
  \eqref{eq:bundlemap}.
\end{proof}
\subsubsection{Parabolic equation} Now, we consider the equation
\begin{equation}\label{eq:heateqn}
   \begin{cases}
      (\ddt + \Delta_A )\sigma =f   &\text{on $[0,T] \times \Sig$}\\
      \sigma =0  &\text{on $[0,T] \times \partial \Sig$}\\
      \sigma(0) = g & \text{on $\Sig$}.
    \end{cases}
\end{equation}
Here $\sig:[0,T] \times \Sig \to E$ is a time-dependent section. The Laplacian $\Delta_A$ is given by the connection $A
\in H^1$ on $P \to \Sig$, that satisfies the curvature bound
$\Mod{F_A}_{L^2(\Sig)}<c_\kappa$. We use standard methods, but get a
$c_\kappa$-bound on the solution.  The spaces $L^2(H^{2s}) \cap
H^s(L^2)$ are very natural to solve the heat equation, since the time
derivative is order 1 and space derivative is order 2.

\begin{lemma} \label{lem:heateqnbaseg} Let $s,s-2r \in (-3/2,
  2]$. Given $g \in \F^s$ and $f=0$, we can find a unique solution
  $\sig \in H^{\hh+r,s-2r}$ for \eqref{eq:heateqn}, with bound
  $\Mod{\sig}_{\hh+r,s-2r} \leq c_\kappa t^{-r}\Mod g_s$.
\end{lemma}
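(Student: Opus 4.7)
The plan is to diagonalize the spatial part of \eqref{eq:heateqn} using the spectral basis of $\Delta_A$ provided by Proposition~\ref{prop:eignorm}. Write $g = \sum_{i \in I} g_i e_i$ with $g_i := (g, e_i)_{L^2}$, and define
$$\sigma(t,x) := \sum_{i \in I} g_i e^{-\lambda_i t} e_i(x).$$
Because each $e_i \in \F^{s_0+1}$ satisfies the requisite boundary conditions and $\Delta_A e_i = \lambda_i e_i$, each summand solves the heat equation with initial datum $g_i e_i$, and the series is a formal solution of \eqref{eq:heateqn} with $f = 0$ and $\sigma(0) = g$.

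Using Corollary~\ref{cor:eignormtime}, the mixed Sobolev norm is (up to a $c_K$-constant) given by
$$\|\sigma\|_{\hh+r,s-2r}^{2} \;\asymp\; \sum_{i \in I} (1+\lambda_i)^{s-2r}\, g_i^{2}\, \|e^{-\lambda_i \cdot}\|_{H^{\hh+r}([0,T])}^{2}.$$
Since $\|g\|_{s}^{2} \asymp \sum_i (1+\lambda_i)^{s} g_i^{2}$, the required bound $\|\sigma\|_{\hh+r,s-2r} \leq c_K T^{-r}\|g\|_s$ reduces to the uniform scalar estimate
\begin{equation}\label{eq:scalarplan}
\|e^{-\lambda t}\|_{H^{\hh+r}([0,T])}^{2} \;\leq\; c\, T^{-2r}(1+\lambda)^{2r}, \qquad \lambda \geq 0,
\end{equation}
which I would establish next.

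To prove \eqref{eq:scalarplan}, I first compute at integer orders using Definition~\ref{def:normsum}: writing $h(t) = e^{-\lambda t}$, $\int_0^T h^{2}\,dt$ is $T$ when $\lambda=0$ and is bounded by $\min(T, 1/(2\lambda))$ in general, while the $T$-scaled norm satisfies $\|h\|_{H^m([0,T])}^{2} = \sum_{i=0}^{m} T^{-2(m-i)} \lambda^{2i} \int_0^T h^{2}$. A direct estimate splits into the two regimes $\lambda T \leq 1$ and $\lambda T > 1$, yielding $\|h\|_{H^m}^{2} \leq C_m T^{1-2m}(1+\lambda T)^{2m-1}$. Interpolating between two consecutive integers bracketing $\hh+r$ and using Hilbert-space interpolation of the time-Sobolev norm (as in \eqref{eq:interpolationtime}) then gives \eqref{eq:scalarplan}. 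Substituting back into the spectral sum yields $\|\sigma\|_{\hh+r,s-2r}^{2} \leq c_K T^{-2r}\|g\|_s^{2}$, which is the asserted bound. The boundary condition $\sigma|_{\partial\Sig}=0$ is automatic since $e_i \in \F^{s_0+1}$ vanishes on $\partial \Sig$ and Proposition~\ref{prop:eignorm} identifies the eigenfunction-weighted sum with the $\F^s$-norm.

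For uniqueness, if $\tilde\sigma \in H^{\hh+r,s-2r}$ is another solution of \eqref{eq:heateqn} with the same data, then its Fourier coefficients $\tilde\sigma_i(t) := (\tilde\sigma(t), e_i)_{L^2}$ lie in $H^{\hh+r}([0,T])$ and must satisfy the scalar ODE $\tilde\sigma_i' + \lambda_i \tilde\sigma_i = 0$ (interpreted distributionally in $t$) with the constraint matching $g_i$ at $t=0$; the only solution is $\tilde\sigma_i(t) = g_i e^{-\lambda_i t}$, so $\tilde\sigma = \sigma$. The main obstacle I expect is the scalar estimate \eqref{eq:scalarplan}: the two regimes $\lambda T \lesssim 1$ and $\lambda T \gg 1$ must be unified so that the single bound $T^{-2r}(1+\lambda)^{2r}$ absorbs both the initial-layer singularity (reflected in the $T^{-2r}$ factor) and the exponential decay at high frequency, while maintaining the $c_K$-uniformity across the admissible range of $s$ and $r$.
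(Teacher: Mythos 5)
Your proof follows the same overall strategy as the paper's: expand $g$ and $\sigma$ in the eigenbasis $\{e_i\}$ of $\Delta_A$ supplied by Proposition~\ref{prop:eignorm}, diagonalize the equation into scalar ODEs $\sigma_i' + \lambda_i \sigma_i = 0$, and reduce the operator bound to a scalar estimate on $\Mod{e^{-\lambda t}}_{H^{\hh+r}([0,T])}$ via Corollary~\ref{cor:eignormtime}. The place where you diverge is in how the scalar estimate is established. The paper cites the closed-form bound $\Mod{e^{-\lambda t}\chi_{[0,T]}}_{H^\rho([0,T])} \leq c(\lambda+T^{-1})^{\rho-\hh}$, which is most naturally read off from the Fourier-transform characterization of the $T$-scaled norm (Definition~\ref{def:fourier}) and holds for $\rho$ of any sign. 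You instead compute the $T$-scaled integer norms directly and interpolate. This is a clean elementary alternative, and in fact your integer bound is exactly the paper's bound in disguise: $T^{1-2m}(1+\lambda T)^{2m-1}=(\lambda+T^{-1})^{2m-1}$, so log-convexity of the complex interpolation norm gives you $(\lambda+T^{-1})^{2(\hh+r)-1}$ for any $\hh+r\geq 0$, which is at least as sharp as what you claimed.

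There is, however, a genuine gap in range. The hypothesis $s,s-2r\in(-3/2,5/2)$ permits $r\in(-2,2)$, so the time exponent $\hh+r$ can lie anywhere in $(-3/2,5/2)$ and in particular can be negative; this regime is not decorative — Lemma~\ref{lem:Mbd} invokes the present lemma with $\hh+r=-\hh+\eps$ (through $F_1\in H^{-\hh+\eps,2-2\eps}$). Your scalar argument computes $\Mod{e^{-\lambda t}}_{H^m}$ only for non-negative integers $m$ via Definition~\ref{def:normsum} and then interpolates between consecutive non-negative integers, so it never produces a bound when $\hh+r<0$. For those indices $H^{\hh+r}([0,T])$ is defined by duality against $H_0^{-(\hh+r)}$, and "interpolating between the two integers bracketing $\hh+r$'' would require the as-yet-unproved negative-integer estimates $\Mod{e^{-\lambda t}}_{H^{-m}}$. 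A separate duality argument (estimating $\sup_\phi \int_0^T e^{-\lambda t}\phi(T-t)\,dt/\Mod{\phi}_{H^{m}_0}$, or proving the paper's cited bound uniformly in $\rho$ via the Fourier definition) is needed to close this range. Relatedly, your uniqueness argument appeals to "the constraint matching $g_i$ at $t=0$,'' but for $\hh+r<\hh$ the pointwise trace at $t=0$ is not available in $H^{\hh+r}([0,T])$, so that step also needs to be reformulated (the paper is similarly terse here, so this is a shared issue rather than one unique to your write-up).
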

\begin{proof}
  The solution $\sig$ is calculated using the eigen-section basis from
  Proposition \ref{prop:eignorm}. By that result, $g$ can be written as
  $g=\sum_{i \in I}g_i e_i$, where $g_i \in \R$. Then, $\sig_i(t):=g_i
  e^{-\lambda_i t}$ is a solution of the differential equation
  $\frac{d \sig_i}{dt} + \lambda_i \sig_i = 0$ with initial condition
  $\sig_i(0)=g_i$. Hence, $\sigma:=\sum_{i \in I} \sigma_i(t) e_i$ is
  a solution of the heat equation \eqref{eq:heateqn}.  To bound the
  norm of the solution $\sig$, we use the eigen-value norm in
  Corollary \ref{cor:eignormtime}. It is enough to show that for each
  $i$,
    $$\Mod{(1+\lambda_i)^{-r} e^{-\lambda_i t}}_{H^r([0,T])} \leq cT^{-\hh} |(1+\lambda_i)|,$$
  which holds using $\Mod{e^{-\lambda_it}\chi_{[0,T]}}_{H^r([0,T])}
  \leq c(\lambda_i + T^{-1})^{r-\hh}$ and assuming\linebreak $T\leq 1$.
\end{proof}
\begin{remark}\label{rem:C0heat} 
  The operator $g \mapsto \sig$ is well-defined between the spaces
  $H^s \to C^0(H^s)$ and has norm $\leq c$. The proof is similar and
  follows from $\Mod{e^{-\lambda_i t}}_{C^0([0,T])} \leq 1$.
\end{remark}
\begin{lemma} \label{lem:heateqnbasef} Let $-\thh<s<\hh$. Given $f \in
  H_P^{r,s}$ and $g=0$, \eqref{eq:heateqn} can be solved uniquely for
  $\sig \in H_P^{r+1,s} \cap H_P^{r,s+2}$, with bound $\Mod
  \sigma_{H^{r+1,s} \cap H^{r,s+2}} \leq\linebreak c_\kappa\Mod f_{r,s}$.
\end{lemma}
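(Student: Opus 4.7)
The plan is to reduce the PDE \eqref{eq:heateqn} to a family of decoupled ODEs by expanding in the Dirichlet eigenbasis $\{e_i\}_{i\in I}$ of $\Delta_A$ provided by Proposition \ref{prop:eignorm}. Since each $e_i$ already vanishes on $\partial\Sigma$ and $\sigma(0)=0$, writing $f=\sum_i f_i(t)e_i$ and seeking $\sigma=\sum_i \sigma_i(t)e_i$ converts \eqref{eq:heateqn} into the scalar ODEs
\begin{equation*}
\dot\sigma_i+\lambda_i\sigma_i=f_i,\qquad \sigma_i(0)=0,
\end{equation*}
each solved explicitly by $\sigma_i(t)=\int_0^t e^{-\lambda_i(t-\tau)}f_i(\tau)\,d\tau$. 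This yields existence, uniqueness in $H^{r+1}_P$, and the Dirichlet boundary condition for $\sigma$ simultaneously.

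The whole proof then reduces to the one-variable parabolic estimate
\begin{equation}\label{eq:planmodewise}
\Mod{\sigma_i}_{H^{r+1}([0,T])}^{2}+(1+\lambda_i)^{2}\Mod{\sigma_i}_{H^{r}([0,T])}^{2}\leq c\,\Mod{f_i}_{H^{r}([0,T])}^{2},
\end{equation}
with a constant $c$ independent of $\lambda_i\geq 0$ and of $T$. I would first establish \eqref{eq:planmodewise} for non-negative integer $r$ by a direct energy computation: differentiating the ODE $k$ times and testing with $\sigma_i^{(k)}$ gives control of $\sigma_i^{(k+1)}$ and $\lambda_i\sigma_i^{(k)}$ in $L^2([0,T])$ by $f_i^{(k)}$, while the Poincar\'e-type bound $\Mod{\sigma_i}_{L^2}\leq T\Mod{\dot\sigma_i}_{L^2}$ (available because $\sigma_i(0)=0$) handles the lowest-order pieces and absorbs the $T^{-(r+1-k)}$ weights that appear in Definition \ref{def:normsum}. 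Negative integer $r$ follows by duality, since the formal adjoint of the Duhamel operator is the Duhamel operator for the backward heat equation with zero terminal data; fractional $r$ is then obtained by complex interpolation.

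Once \eqref{eq:planmodewise} is in hand, multiplying through by $(1+\lambda_i)^s$, summing over $i\in I$, and applying the eigenvalue-norm equivalence of Corollary \ref{cor:eignormtime} yields
\begin{equation*}
\Mod{\sigma}_{r+1,s}^{2}+\Mod{\sigma}_{r,s+2}^{2}\leq c_K\Mod{f}_{r,s}^{2},
\end{equation*}
which is the desired inequality. The hypothesis $-\thh<s<\hh$ is used precisely so that $\F^s=H^s$ and $\F^{s+2}=H^{s+2}_\delta$: the eigenbasis expansion then respects the target space $H^{r,s+2}$ with no spurious boundary corrections beyond the ones already built into the eigenfunctions.

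The main obstacle is the low-frequency regime $\lambda_i T\ll 1$, where the mode-wise solution behaves like plain time-integration and the multiplier $1/(i\tau+\lambda_i)$ is not uniformly controlled against the $T$-scaled Fourier weights of Definition \ref{def:fourier}. One must track the $T^{-(r+1-k)}$ weights carefully so that the constant in \eqref{eq:planmodewise} is genuinely independent of both $\lambda_i$ and $T$; this is exactly the bookkeeping that makes the estimate usable in the implicit-function argument of Section \ref{subsec:flowexist}, where $c_K$ must depend only on the curvature bound $K$ and not on the short time $t_0$.
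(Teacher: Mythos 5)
Your proposal follows the same overall architecture as the paper's proof: expand in the Dirichlet eigenbasis $\{e_i\}$ of $\Delta_A$ from Proposition~\ref{prop:eignorm}, solve the decoupled scalar ODEs by Duhamel's formula, prove a mode-wise one-variable estimate uniform in $\lambda_i$ and $T$, and sum using the eigenvalue norm of Corollary~\ref{cor:eignormtime}. Where you differ is in how the mode-wise estimate \eqref{eq:eigcoeff} is actually proved. The paper works directly with the Fourier-transform characterization of the scaled norm (Definition~\ref{def:fourier}): it chooses an extension $F_i$ of $f_i$ vanishing for $t<0$, writes $S_i=F_i*e^{-\lambda_i t}\chi_{[0,T]}$, and reads off both the $H^{r+1}$ gain and the $(1+\lambda_i)$ gain from the two pointwise multiplier bounds $\widehat{e^{-\lambda_i t}\chi_{[0,T]}}\lesssim (T^{-2}+\tau^2)^{-1/2}$ and $\widehat{e^{-\lambda_i t}\chi_{[0,T]}}\lesssim (1+\lambda_i)^{-1}$; this handles all real $r\geq 0$ at once and then dualizes for $r<0$. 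You instead propose an energy argument for integer $r\geq 0$ --- differentiate the ODE $k$ times, use that $\sigma_i^{(j)}(0)=0$ for $j\leq r$ (which holds because $f_i\in H^r_P$ has vanishing initial data), test with $\sigma_i^{(k)}$ to get $\lambda_i\|\sigma_i^{(k)}\|_{L^2}\leq\|f_i^{(k)}\|_{L^2}$, and absorb the $T^{-(r+1-k)}$ weights via the Poincar\'e bound $\|\sigma_i^{(k)}\|_{L^2}\leq T\|\sigma_i^{(k+1)}\|_{L^2}$ --- and then fill in fractional $r$ by complex interpolation of the resulting solution operators and negative $r$ by duality, exactly as the paper does. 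The bookkeeping checks out: with $T\leq 1$ the weighted pieces all close, and the initial-data condition needed at each differentiation step is exactly what membership in $H^r_P$ (and the ODE) supplies. The trade-off is that the paper gets all $r\geq 0$ in one stroke from the Fourier picture, while your route is more elementary per step but requires an extra interpolation pass; both correctly isolate the small-$\lambda_i T$ regime as the reason the $T$-scaled norms are needed.

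One small clarification worth making explicit in a write-up: your interpolation step is an interpolation of the two families of operators $f_i\mapsto\sigma_i$ viewed as maps $H^r_P\to H^{r+1}_P$ (norm $\leq c$) and $H^r_P\to H^r_P$ (norm $\leq c(1+\lambda_i)^{-1}$) separately, with the constants independent of $i$; it is not a single interpolation of the combined estimate. This matters because the $\lambda_i$-dependence sits in the operator norm, not the space. The paper sidesteps this by proving the estimate for all real $r\geq 0$ directly.
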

\begin{proof}
  Similar to the proof of Lemma \ref{lem:heateqnbaseg}, the solution
  $\sig$ is calculated using the eigen-section basis from Proposition
  \ref{prop:eignorm}.  We write $f$ as $f=\sum_{i \in I}f_i(t) e_i$,
  where $f_i \in H_P^r([0,T],\R)$. The solution of the heat equation
  can be written as $\sig=\sum_{i \in I}\sig_i(t)$, where $\sig_i$ is
  a solution of the ODE $\frac{d \sig_i}{dt} + \lambda_i \sig_i = f_i$
  and $\sig_i(0)=0$. Therefore, $\sig_i$ is given by
  $\sig_i(t):=\int_0^t e^{-\lambda_i(t-s)}f_i(s) ds$. To bound $\sig$,
  we need to show
  \begin{equation}\label{eq:eigcoeff}
    |\sig_i|_{H^{r+1}_P([0,T])} + (1+\lambda_i)|\sig_i|_{H^r_P([0,T])} \leq c|f_i|_{H^r_P([0,T])}
  \end{equation}
for each
  $i \in I$.
  First assume $r\geq 0$. It is enough to prove the statement for $f_i
  \in C^\infty_P$. We prove it using the Fourier-transform definition
  of the norm of $H^r([0,T])$ (see definition \ref{def:fourier}). By
  this, there exists $F_i \in C^\infty_0(\R)$ that restricts to $f_i$
  on $[0,T]$, vanishes for $t<0$ and $\Mod{F}_{H^r(\R)} \leq
  2\Mod{f}_{H^r([0,T])}$. Let $S_i=F_i * e^{-\lambda_it}
  \chi_{[0,T]}$. Then $S_i$ vanishes for $t<0$ and restricts to
  $\sig_i$ on $[0,T]$ and
  $\Mod{(T^{-2}+\tau^2)^{r/2}\hat{S}_i}_{H^r(\R),T} \leq 2
  \Mod{\sig_i}_{H^r([0,T])}$. So, we need to prove
  $$\Mod{(T^{-2}+\tau^2)^{1/2}\hat{S}_i(\tau)}_{L^2(\R)} + |1+\lambda_i|\cdot \Mod{\hat{S}_i(\tau)}_{L^2(\R)} \leq c \Mod{\hat{F}_i(\tau)}_{L^2(\R)},$$
  which follows from observing that $\hat{S}_i(\tau)=\hat{F}_i(\tau)
  \widehat{e^{-\lambda_it}\chi_{[0,T]}}$ and
  $\widehat{e^{-\lambda_it}\chi_{[0,T]}} \leq (T^{-2}+\tau^2)^{-\hh}$,
  $\widehat{e^{-\lambda_it}\chi_{[0,T]}} \leq (1+\lambda_i)^{-1}$.  We
  have proved that the operators $f_i \mapsto \int_0^t
  e^{-\lambda_i(t-s)}f_i(s) ds$ and $f_i \mapsto (1+\lambda_i)\int_0^t
  e^{-\lambda_i(t-s)}f_i(s) ds$ are uniformly bounded between $H^r_P \to
  H^{r+1}_P$ and $H^r_P \to H^r_P$ respectively
  for $r\geq 0$. These operators are self-adjoint under the
  $L^2$ pairing. So the statement holds for negative $r$
  by duality.
\end{proof}

Finally, we prove an intermediate derivatives result, which allows us to say that if a time-dependent section is in $L^2(H^s) \cap H^{s/2}(L^2)$, then, it is in $H^{\theta s/2}(H^{(1-\theta)s})$ for any $\theta \in (0,1)$.
\begin{lemma}
  \label{lem:intder} Let $X$, $Y$ be Hilbert spaces as in Lemma
  \ref{lem:selfad} and $[X,Y]_\theta$ be a family of interpolation
  spaces for $\theta \in [0,1]$. Then for any real numbers $r_1<r_2$
  and $r_3:=(1-\theta)r_1 + \theta r_2$, there is an isomorphism 
  $$I_\theta(H^{r_1}([0,T],X),H^{r_2}_*([0,T],Y)) \to H^{r_3}([X,Y]_\theta).$$
\end{lemma}
\begin{proof} A map $f$ in the space $\H(H^{r_1}(X),H^{r_2}(Y))$ is a holomorphic map from the strip $\{z:\on{Re}(z) \in
  [0,1]\}$ to the Banach space $H^{r_1}(Y)$ such that the restrictions
  $f|_{i\R}$ and $f|_{1+i\R}$ are continuous and bounded maps from
  $\R$ to $H^{r_1}([0,T],X)$ and $H^{r_2}([0,T],Y)$
  respectively. Define another holomorphic map
  $g(z):=\Lambda^{-z}f(z)$ on the strip. The map is well-defined,
  because by Lemma \ref{lem:selfad} and \eqref{eq:bundlemap}, the map
  $\Lambda^{-z}:H^{r_2}(Y) \to H^{r_2}(Y)$ is well-defined for any $z$ on the strip. Now, we
  examine the boundaries of the strip. The maps
  \begin{equation}
    \label{eq:stripg}
    H^{r_2}(Y) \xrightarrow{\Lam^{-1-i\tau}} H^{r_2}(X), \quad  H^{r_1}(X) \xrightarrow{\Lam^{-i\tau}} H^{r_1}(X)  
  \end{equation}  
  are continuous for all $\tau \in \R$, with norms bounded by
  $\Mod{\Lam^{-1}}$ and $1$ respectively. Since the boundaries of the
  strip map continuously and boundedly to $H^{r_1}(X)$, the same is
  true of the interior. In particular, \eqref{eq:stripg} implies that $g
  \in \H(H^{r_1}(X),H^{r_2}(X))$. Then by the interpolation relation \eqref{eq:interpolationtime}, we can say that
  $g(\theta) \in H^{r_3}(X)$. Further, by the
  continuous map $\Lambda^\theta:X \to [X,Y]_\theta$ and~\eqref{eq:bundlemap}, we have $f(\theta) \in H^{r_3}([X,Y]_\theta).$
  The norm bound follows naturally from the above calculations in a
  similar way to Lemma \ref{lem:selfad}.

For the inverse map, consider 
$a \in H^{r_3}([X,Y]_\theta)$. Then, there is a holomorphic map $f \in \H(H^{r_1}([X,Y]_\theta), H^{r_2}([X,Y]_\theta))$ such that $f(\theta)=a$ and $\Mod{f} \leq 2\Mod{a}$. The map $g(z):=\Lambda^{-\theta+z}f(z)$ can be shown to lie in $\H(H^{r_1}(X), H^{r_2}(Y))$, which proves the result.
 \end{proof}

\begin{corollary}\label{cor:interpolmixed}
Suppose $\theta \in [0,1]$, $s_1, s_2, s_3 \in (-\thh,2]\bs \{\hh\}$, $r_1$, $r_2$, $r_3 \in \R$ are such that $r_1<r_2$, $r_3=\theta r_1 + (1-\theta)r_2$, $s_1>s_2$ and $s_3:=\theta s_1 + (1-\theta)s_2$. Then, the interpolation space  $I_\theta(H^{r_1}_*(\F^{s_1},H^{r_2}_*(\F^{s_2}))$ is $c_\kappa$-isometric to $H^{r_3}_*(\F^{s_3})$. Here $\F^s:=H^s(\Sig,E)$ for $s \in (-\thh,\hh)$ and $\F^s=H^s_\partial(\Sig,E)$ for  $s \in (\hh,2]$.
\end{corollary}
\subsection{Interchanging order of
  coordinates} \label{subsec:interchange} In this section, we define
spaces of sections with $r$ derivatives in the time co-ordinate and
continuous in the space co-ordinate. The spaces are denoted by
$H^r(C^0)$. Since $C^0$ does not have a good dual space, it is not
possible to define the spaces for negative $r$ in a natural way. To
circumvent this problem, we show that the space $H^{r,s}$ can be
defined with the order of co-ordinates $r$, $s$ reversed, as
$H^s(\Sig,H^r([0,T],E))$. Then $H^r(C^0)$ can be defined as $C^0(\Sig,
H^r([0,T],E))$ and this space has relevant properties like Sobolev
embedding $H^{r,1+\eps} \hra H^r(C^0)$. In this section, the spaces
with reversed co-ordinates will be denoted by $\ol H^{r,s}$, but this
notation will not be used once it is proved equivalent to $H^{r,s}$.

A {\em Hilbert bundle} $\pi:\H \to \Sig$ is a bundle on $\Sig$ with
fiber-wise inner product, whose fibres are isomorphic to a Hilbert
space $H$.  The bundle $\H$ is described by the following data: a
cover $\sqcup_\alpha U_\alpha$ of $\Sig$ and smooth transition
functions on intersections $g_{\beta \alpha}:U_\alpha \cap U_\beta \to
\Aut(H,H)$, where $\Aut(H,H)$ is the space of linear isomorphisms from
$H$ to $H$ that preserve the inner product. The Hilbert bundle is
defined as the quotient
\begin{align}\label{eq:hilbbundle}
  \H:=\bigsqcup_\alpha (U_\alpha \times H)/\sim,
\end{align}
where the equivalence $\sim$ is given by $U_\alpha \times H \ni (x,h)
\sim (x,g_{\beta \alpha}(x)h) \in U_\beta \times H$ for all $x \in
U_\alpha \cap U_\beta$.  A {\em section} of the Hilbert bundle
$\sig:\Sig \to \H$ is given by local sections $\sig_\alpha:U_\alpha
\to H$ that agree on intersections, i.e. $\sig_\beta=g_{\beta
  \alpha}\sig_\alpha$ on $U_\alpha \cap U_\beta$. For any $s \geq 0$,
a section $\sig$ is in $H^s(\Sig,\H)$ if for each coordinate chart
$\alpha$, $\sig_\alpha$ is in $H^s_0(U_\alpha,H)$. The space $H^s(\Sig,\H)$ has a norm
\begin{equation}\label{eq:hilbnorm}
  \Mod{\sigma}^2_{H^s(\Sig,\H)}:=\sum_\alpha \Mod{\eta^{1/2}_\alpha\sigma_\alpha}_{H^s_0(U_\alpha,H)}^2, \quad \sig \in H^s(\Sig,\H),
\end{equation}
that makes it a Hilbert space.  Sobolev sections for some indices can
still be defined if the transition functions $g_{\beta \alpha}$ are
not smooth. In our application, the structure group will be a finite dimensional subgroup of
$\Aut(H,H)$. So, it makes sense to consider transition
functions $g_{\beta\alpha}:U_\alpha \cap U_\beta \to \Aut(H,H)$ that
are in $H^2(U_\alpha \cap U_\beta)$. Then the spaces $H^s(\Sig,\H)$
are well-defined for $s \in [0,2]$ with norm given by
\eqref{eq:hilbnorm}.

We are now ready to define the spaces $H^s(\Sig, H^r([0,T],E))$ with
reversed coordinates. First, we recall the set-up of the preceding
sections, where $E \to \Sig$ is a vector bundle with a compact
structure group $K$ and a unitary $H^1$ connection $A$. The connection
has a curvature bound $\Mod{F(A)}_{L^2}<\kappa$. There is a
trivialization of the bundle $E$ over a cover $\sqcup_\alpha U_\alpha$
of $\Sig$, such that the connection matrices and transition functions
of $E$ are $c_\kappa$ bounded (see Lemma \ref{lem:uh} on Uhlenbeck
compactness). For any $r \in \R$, we define a Hilbert bundle
$H^r([0,T],E) \to \Sig$ whose fiber over $z \in \Sig$ is
$H^r_*([0,T],E_z)$. To express the bundle $H^r([0,T],E)$ in the form
\eqref{eq:hilbbundle}, we use the above trivialization of the bundle
$E|_{U_\alpha}$ corresponding to the connection $A$.  The transition
functions of the bundle $E$ induce transition functions $g_{\alpha
  \beta}$ of the Hilbert bundle. That is, for any $z \in U_\alpha \cap
U_\beta$, $g_{\alpha \beta}(z)$ is an element of $K$ and is
independent of time $t \in [0,T]$. We define the space
$\ol{H}_*^{r,s}(E)$ as the space of $H^s$-sections of the Hilbert
bundle $H^r([0,T],E) \to \Sig$. Since the transition functions
$g_{\beta\alpha}$ are in $H^2(U_\alpha \cap U_\beta)$, the spaces
$\ol{H}_*^{r,s}(E)$ are well-defined for $s \in [-2,2]$ and $r \in
\R$.

\begin{proposition} 
  Let $s \in [0,2]$ and $r \in \R$. The differentiation operator
  $\ddt:H_P^r([0,T],\allowbreak \R^m)\to H_P^{r-1}([0,T],\R^m)$
  induces an invertible operator $\ddt^\Sig:\ol H_P^{r,s} \to \ol
  H_P^{r-1,s}$. The inverse is induced by $\int_0$ on the fibres.
\end{proposition}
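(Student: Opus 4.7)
The plan is to observe that $\ddt$ acts fiberwise on the Hilbert bundle $H_P^r([0,T],E) \to \Sigma$ and then promote this pointwise operator to an operator on $H^s$-sections. First I would verify that $\ddt$ descends to a well-defined bundle morphism $H_P^r([0,T],E) \to H_P^{r-1}([0,T],E)$. Both bundles inherit their transition functions from those of $E$, namely the maps $g_{\alpha\beta}:\U_\alpha \cap \U_\beta \to G \subseteq SO(n)$ acting on $\R^m$ componentwise in $t$. Since $g_{\alpha\beta}(x)$ is time-independent, it commutes with $\ddt$; hence if $\sigma_\alpha(x) = g_{\alpha\beta}(x)\sigma_\beta(x)$ on an overlap, then $(\ddt\sigma)_\alpha(x) = g_{\alpha\beta}(x)(\ddt \sigma)_\beta(x)$, and $\ddt$ is a legitimate bundle map. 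The same argument works for $\int_0$.

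Next I would use that by lemma \ref{lem:integrationtime} the fiber operator $\ddt:H_P^r([0,T],\R^m) \to H_P^{r-1}([0,T],\R^m)$ is an isomorphism of Hilbert spaces with bounded inverse $\int_0$. For any bounded linear map $T:H_1 \to H_2$ of Hilbert spaces, post-composition induces a bounded map $H^s_0(\V_\alpha,H_1) \to H^s_0(\V_\alpha,H_2)$ with norm $\leq \Mod{T}$, because $T$ commutes with differentiation in the $\V_\alpha$-variable and with multiplication by scalar functions. Since the cutoffs $\eta_\alpha^{1/2}$ are also time-independent, $\ddt$ commutes with $\eta_\alpha^{1/2}\cdot$, and the definition \eqref{eq:hilbnorm} gives directly
\[
\Mod{\ddt^\Sig\sigma}^2_{\ol H_P^{r-1,s}} = \sum_\alpha |\eta_\alpha^{1/2}\ddt\sigma_\alpha|^2_{H^s_0(\V_\alpha,H_P^{r-1})} \leq \Mod{\ddt}^2\sum_\alpha |\eta_\alpha^{1/2}\sigma_\alpha|^2_{H^s_0(\V_\alpha,H_P^r)} = \Mod{\ddt}^2 \Mod{\sigma}^2_{\ol H_P^{r,s}}.
\]
The same estimate with $\int_0$ in place of $\ddt$ yields a bounded operator $\int_0^\Sig: \ol H_P^{r-1,s} \to \ol H_P^{r,s}$.

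Finally, since $\ddt \circ \int_0 = \Id$ and $\int_0 \circ \ddt = \Id$ hold on each fiber $H_P^r([0,T],\R^m)$, the same identities hold pointwise in $x \in \Sig$ for the induced operators $\ddt^\Sig$ and $\int_0^\Sig$. Thus $\ddt^\Sig$ is invertible with inverse $\int_0^\Sig$. There is no real obstacle here; the content of the statement is just the observation that, because both the transition functions and the partition of unity are time-independent, a bounded fiberwise linear operator on the time variable extends cleanly to a bounded operator on sections in any $H^s$ class over $\Sig$.
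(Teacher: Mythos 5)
Your proof is correct and follows essentially the same route as the paper: you observe that the time-independent transition functions commute with $\ddt$ so the operator patches to a bundle map, that a bounded linear map of Hilbert spaces which is constant over $\V_\alpha$ induces a bounded map on $H^s_0(\V_\alpha,\cdot)$-sections, and that the fibrewise inverse $\int_0$ from lemma \ref{lem:integrationtime} gives the inverse of the induced operator. You spell out the reason a bounded fiber operator extends to $H^s$-sections (commuting with spatial differentiation and cutoffs) slightly more explicitly than the paper, but the argument is the same.
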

\begin{proof} The bundle $H^r_P([0,T],E) \to \Sig$ is trivializable on
  the open sets $U_\alpha \subset \Sig$. The fiberwise operator $\ddt$
  induces the map
  \begin{equation}\label{eq:Umap}
    \ddt^{U_\alpha}:H^s(U_\alpha,H_P^r([0,T],\R^m)) \to H^s(U_\alpha,H_P^{r-1}([0,T],\R^m)).
  \end{equation}
  On the intersection $U_\alpha \cap U_\beta$, $\ddt^{U_\alpha}$ and
  $\ddt^{U_\beta}$ agree, i.e. $g_{\alpha \beta}^{-1}(z)
  \ddt^{U_\alpha}g_{\alpha \beta}(z)= \ddt^{U_\beta}$ for all $z \in U_\alpha \cap U_\beta$.  Hence, they
  patch up to yield $\ddt^\Sig$ defined on $H^s(\Sig,H^r_P([0,T],E))$.
  The operator \eqref{eq:Umap} is linear on the fibres with norm $\leq
  c$ (see Lemma \ref{lem:integrationtime}). It is also identical on
  every fibre. So, the norm of~\eqref{eq:Umap} is $\leq c$, and hence
  the same is true of $\ddt^\Sig$.  Since $\int_0$ is the inverse of $\ddt$
  fibre-wise, the result follows for $s \geq 0$.
\end{proof}
\begin{proposition}For $r \in \R$ and $s\in [0,2]$, the identity map
  \begin{align}\label{eq:reviso}
    \ol H^{r,s} \to H^{r,s}
  \end{align}
  is a $c_\kappa$-isomorphism.
\end{proposition}
\begin{proof}
  First, we consider the case when $r$ and $s$ are non-negative
  integers. In the proof of Proposition \ref{prop:curvbound}, we
  showed for $\sigma \in H^s(E)$,
$$c_\kappa^{-1} \Mod{\sigma}_s \leq |\sigma|_s \leq \Mod{\sigma}_s.$$
So, it is enough to show that
\begin{align}\label{eq:eucrev}
  H^s_0(U_\alpha,H_*^r([0,T],\R^m)) \simeq
  H_*^r([0,T],H^s_0(U_\alpha,\R^m))
\end{align}
with constants independent of $T$. These spaces are in fact identical
when $r$ and $s$ are non-negative integers since both are completions
of $C^\infty_{0,P}(U_\alpha \times [0,T],\R^m)$ under the same norm
$$\left(\sum_{i=0}^r \sum_{0\leq |\lambda| \leq s} \Mod{T^{-(r-i)}\frac{d^i}{dt^i}\frac{d^\lambda}{dx^\lambda} \sigma}^2_{L^2(U_\alpha \times [0,T])}\right)^{1/2}.$$
The spaces in \eqref{eq:eucrev} are equivalent for non-integers $r
\geq 0$ and $s \in [0,2]$ by interpolation.

Next, we prove the equivalence of $H_P^{-r,s}$ and $\ol H_P^{-r,s}$,
for non-negative $r$ and $0 \leq s \leq 2$ by induction on $r$. The
result is true for $-1 < r \leq 0$, which forms the base case of the induction. We get an isomorphism between
$H_P^{-r,s}$ and $\ol H_P^{r,s}$ by
$$H_P^{-r,s} \xrightarrow{\int_0} H_P^{-r+1,s} \xrightarrow {\simeq} \ol H_P^{-r+1,s} \xrightarrow {\ddt} \ol H_P^{-r,s}.$$
Here each of the arrows is an isomorphism with constants $\leq
c_\kappa$, and the middle arrow comes from the induction hypothesis.
\end{proof}
Reversing the order of space and time co-ordinates lets us define the
space $H^r(C^0)$ for any $r$.
\begin{definition}
  For any $r$, $H^r(C^0):=C^0(\Sig,H^r([0,T],E))$. It is the space of
  continuous sections of $H^r([0,T],E)$. Its norm is given by
$$\Mod{\sigma}_{r,C^0}:=\sup_{z \in \Sig}\Mod{\sigma(z)}_{H^r([0,T],E)}.$$
\end{definition}
This space satisfies the following properties: for any $r$, there is
an inclusion
\begin{align*}
  H^r(C^0) \hra H^r(L^2).
\end{align*}
Differentiation $\ddt$ is an invertible operator with inverse $\int_0$
between the following spaces
\begin{align*}
  H^r_P(C^0) & \xrightarrow{\ddt} H^{r-1}_P(C^0).
\end{align*}
There is a multiplication operator, for $r_3 \leq
min(r_1,r_2,r_1+r_2-\hh)$
\begin{align}\label{eq:c0mult}
  H_P^{r_1}(C^0)\tensor H_P^{r_2}(C^0) & \lra H_P^{r_3}(C^0).
\end{align}
For any $r$, there is an inclusion
\begin{align*}
  H_P^{r,1+\eps} & \hra H_P^r(C^0).
\end{align*}
The following result follows by the definition of $H^r(C^0)$.
\begin{proposition}
  If $\Mod{F_{A}}_{L^2}<\kappa$, all the above operators have norms
  bounded by $c_\kappa$.
\end{proposition}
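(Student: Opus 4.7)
The plan is to observe that each of the four claimed bounds is inherited fibrewise from the analogous bound on the base space $H^r([0,T],\R^m)$, combined with either the $c_K$-equivalence $\overline H^{r,s} \simeq H^{r,s}$ from \eqref{eq:reviso}/\eqref{eq:revisop} or a straightforward bundle Sobolev embedding. Since $H^r(C^0)=C^0(\Sig,H^r([0,T],E))$ is built over the same trivialization produced by Lemma \ref{lem:uh}, everything reduces to constants already known to depend only on $K$.

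First, for the inclusion $H^r(C^0)\hookrightarrow H^r(L^2)$, I would note that $H^r(L^2)=H^{r,0}$ and, by the reversed-order identification, $H^{r,0}\simeq \overline H^{r,0}=L^2(\Sig,H^r([0,T],E))$ (with $c_K$-equivalent norm for $s=0\in[0,s_0+1]$). The inclusion $C^0(\Sig,\mathcal H)\hookrightarrow L^2(\Sig,\mathcal H)$ for a Hilbert bundle has norm at most $\sqrt{\mathrm{vol}(\Sig)}$, giving a $c_K$ bound. For $\ddt:H_P^r(C^0)\to H_P^{r-1}(C^0)$ and for the multiplication map, the operator acts pointwise in $x\in\Sig$ on the fibre $H_P^r([0,T],\R^m)$; since the base-space estimates (Lemma \ref{lem:integrationtime} and \eqref{eq:multiplicationtime}) hold with constants independent of $A$, the supremum over $x$ in the $H^r(C^0)$ norm produces the same constant. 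Concretely, for each $x\in\Sig$ one has $\Mod{\ddt\sig(x)}_{H^{r-1}([0,T])}\leq c\Mod{\sig(x)}_{H^r([0,T])}$, and taking $\sup_x$ gives the claim; similarly for $(\sig_1\sig_2)(x)$ in the multiplication estimate.

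The one bound requiring actual work is $H_P^{r,1+\eps}\hookrightarrow H_P^r(C^0)$, and this is where I would spend most of the effort. Using \eqref{eq:revisop}, we have a $c_K$-isomorphism $H_P^{r,1+\eps}\simeq \overline H_P^{r,1+\eps}=H^{1+\eps}(\Sig,H_P^r([0,T],E))$. The norm on the right is, by definition \eqref{eq:hilbnorm}, $\bigl(\sum_\alpha |\eta_\alpha^{1/2}\sig_\alpha|_{H^{1+\eps}_0(\V_\alpha,H_P^r([0,T],\R^m))}^2\bigr)^{1/2}$. On each Euclidean chart $\V_\alpha$, the standard vector-valued Sobolev embedding $H^{1+\eps}_0(\V_\alpha,\mathcal H)\hookrightarrow C^0(\V_\alpha,\mathcal H)$ holds with a constant depending only on $\V_\alpha$ (and hence only on $K$, since the cover in Lemma \ref{lem:uh} is determined by $K$). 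Applied with $\mathcal H=H_P^r([0,T],\R^m)$, this yields
\[
\sup_{x\in\U_\alpha}\Mod{\sig(x)}_{H^r([0,T],E)}\;\leq\; c_K\,|\eta_\alpha^{1/2}\sig_\alpha|_{H^{1+\eps}_0(\V_\alpha,H_P^r([0,T]))}
\]
up to bookkeeping with transition functions $g_{\alpha\beta}$, which are bounded in $H^{s_0+1}$ by $c_K$ (Lemma \ref{lem:uh}). Taking the maximum over $\alpha$ and combining with the finite-cover bound gives $\Mod{\sig}_{r,C^0}\leq c_K\Mod{\sig}_{H^{1+\eps}(\Sig,H_P^r)} \leq c_K\Mod{\sig}_{H_P^{r,1+\eps}}$.

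The main obstacle is verifying that the vector-valued Sobolev embedding $H^{1+\eps}_0(\V_\alpha,\mathcal H)\hookrightarrow C^0(\V_\alpha,\mathcal H)$ goes through with the usual scalar constant, independently of the Hilbert space $\mathcal H$; this is standard but must be invoked carefully because $\mathcal H=H_P^r([0,T])$ has $T$-dependent norm and we want no hidden $T$-factors ruining uniformity. The cleanest route is to prove it first for $\mathcal H=\R$ (the classical Morrey-type estimate on bounded Euclidean domains) and then observe that the same pointwise representation formula applies coordinatewise to $\mathcal H$-valued functions, so the constant is unchanged. All other constants are either universal or track back to the $c_K$-dependence in Lemma \ref{lem:uh} and Proposition \ref{prop:localtrivnorm}, completing the proof.
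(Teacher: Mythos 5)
Your proof is correct. The paper supplies no argument here---it simply asserts "By the definition of $H^r(C^0)$, it follows that\dots"---so your reconstruction is precisely the filling-in the author intended, and it identifies the right observations. The first three bounds do follow as you say: the reversed-order identification $H^{r,0}\simeq L^2(\Sig,H^r([0,T],E))$ reduces the inclusion into $H^r(L^2)$ to the trivial $C^0\hookrightarrow L^2$ on a finite-volume base, while $\ddt$, $\int_0$, and fibrewise multiplication commute with the $G$-valued transition functions (which are constant in $t$), so Lemma \ref{lem:integrationtime} and \eqref{eq:multiplicationtime} give uniform bounds directly after taking the supremum over $x$. And you correctly isolate the one genuinely substantive step, the embedding $H_P^{r,1+\eps}\hookrightarrow H_P^r(C^0)$, and handle it the right way: pass to $\ol H_P^{r,1+\eps}=H^{1+\eps}(\Sig,H_P^r([0,T],E))$ via the $c_K$-isomorphism \eqref{eq:revisop} with $s=1+\eps\in[0,s_0+1]$, and then apply the Hilbert-space-valued Morrey/Sobolev embedding on the charts $\V_\alpha$ produced by Lemma \ref{lem:uh}, whose constants depend only on $K$. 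Your remark that the embedding constant is blind to the target Hilbert space (and hence to the $T$-scaling hidden in the $H_P^r([0,T])$ norm) is exactly the point one needs to be careful about, and the argument you sketch for it is standard and sound.

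Two small bookkeeping remarks, neither of which affects correctness. First, your displayed inequality for the sup over $\U_\alpha$ has the cutoff $\eta_\alpha^{1/2}$ on only one side; the clean way to assemble the pieces is $\Mod{\sig(x)}_\H^2=\sum_\alpha\eta_\alpha(x)\Mod{\sig(x)}_\H^2\leq\sum_\alpha\sup_y\Mod{\eta_\alpha^{1/2}(y)\sig_\alpha(y)}_\H^2\leq c_K^2\sum_\alpha|\eta_\alpha^{1/2}\sig_\alpha|_{H^{1+\eps}_0(\V_\alpha,\H)}^2$, which avoids any need to invoke bounds on $g_{\alpha\beta}$ at this step (the transition functions are fibrewise isometries of $\H$, so they drop out once you use the fibre norm). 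Second, the paper states the inclusion $H^r(C^0)\hookrightarrow H^r(L^2)$ "for any $r$" without a $P$-subscript, but the quoted reversed-order isomorphism \eqref{eq:reviso} is only stated for $r\geq 0$; for $s=0$ the Fubini-type identity $H^r([0,T],L^2(\Sig))\simeq L^2(\Sig,H^r([0,T]))$ in fact holds isometrically for all $r$ by the Hilbert tensor-product structure, which rescues the negative-$r$ case, but it is worth saying so explicitly rather than citing \eqref{eq:reviso}.
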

\section{Composition of functions}\label{sec:compfunc}
\subsection{Composition of functions in Sobolev spaces}\label{subsec:compfunc}
Left composition by a smooth function induces a smooth map between
Sobolev spaces of functions. In this section, we discuss some
variations of this result, including one result that deals with
fractional Sobolev spaces. In all of this section, $U$ is a compact
connected subset of $\R^n$ with smooth boundary. The results of this section are used in relation to the following
operators defined in Section \ref{sec:heatflow}:
\begin{align}\label{eq:compeg}
  \nonumber  \Theta_1:\Gamma(\Sig,P(\mathfrak{k})) &\to \Gamma(\Sig,P(\End \mathfrak{k}))\\
  \xi &\mapsto (F \mapsto (\exp_{u_0}\xi)^*{d}\Phi (JF_{\exp_{u_0} \xi})-u_0^*{d}\Phi (JF_{u_0})),\\
  \nonumber  \Theta_2:\Gamma(\Sig,P(\mathfrak{k})) &\to \Gamma(\Sig,P(\End \mathfrak{k}))\\
  \nonumber \xi &\mapsto (F \mapsto ({d}\exp\xi)^{-1}
  (JF_{\exp_{u_0}\xi}) - JF_{u_0}).
\end{align}
\begin{proposition}[Composition of functions]\label{prop:compint} Let
  $l \in \Z_{\geq 0}$ and $\Psi:\R \to \R$ be a $C^l$ function
  satisfying $\Psi(0)=0$. For any integer $k\in [0, l]$ and $p>1$, the
  map
$$\F_\Psi:W^{k,p}(U) \cap C^0(U) \to W^{k,p}(U), \quad f \mapsto \Psi \circ f$$
is continuous and satisfies
$$\Mod{\Psi \circ f}_{W^{k,p}} \leq c\Mod \Psi_{C^k} \Mod{f}_{W^{k,p}}(1+\Mod{f}^{k-1}_{L^\infty}).$$
Further, if $kp>n$, $\F_\Psi$ is a $C^{l-k}$-map
between Banach spaces. If $l \geq k+1$, its derivative satisfies
$$\Mod{{d}\F_\Psi(f)} \leq c\Mod{\Psi}_{C^{k+1}}(1+\Mod{f}_{W^{k,p}})(1+\Mod f_{L^\infty}^{k-1}).$$
\end{proposition}
The differentiability of the operator $\Psi$ is in terms of
Fr\'echet-derivative. A map $L:V \to W$ between Banach spaces is {\em
  Fr\'echet-differentiable} at a point $x \in V$ if there is a linear
bounded function $dL_x:V \to W$ such that $\lim_{h \to
  0}\frac{\Mod{L(x+h)-L(x)-dL_x(h)}_W}{\Mod{h}_V}=0$. In the above
case, the derivative of $\F_\Psi$ at a point $f \in W^{k,p}(U) \cap
C^0(U)$ is the composition $\frac {d\Psi}{df} \circ f$. If $l \geq
k+1$, $\F_\Psi$ is a $C^{l-k}$ map if $\F_{d\Psi/df}$ is a
$C^{l-k-1}$-map.

Proposition \ref{prop:compint} is a slight variation of Proposition
B.1.20 in McDuff-Salamon. We use the space $W^{k,p}(U) \cap C^0(U)$
instead of placing the restriction $kp>n$. The proof is skipped,
because it is similar to a more general result Proposition
\ref{prop:nem} below. The result of Proposition \ref{prop:compint} can
be extended to fractional Sobolev indices.
\begin{proposition}[Composition of functions, fractional Sobolev
  indices]\label{prop:compfrac} Let $U \subset \R$. Let $\frac 1 p<s<1$ and $\Psi:\R \to \R$ be a $C^2$ function that satisfies
  $\Psi(0)=0$. Then,
$$\F_\Psi:W^{s,p}(U) \to W^{s,p}(U), \quad f \mapsto \Psi \circ f$$
is a $C^1$-map between Banach spaces that satisfies
$$\Mod{\Psi \circ f}_{W^{s,p}} \leq c\Mod \Psi_{C^1} \Mod{f}_{W^{s,p}}, \quad \Mod{{d}\F_\Psi(f)} \leq c\Mod{\Psi}_{C^2}(1+\Mod{f}_{W^{s,p}}).$$
\end{proposition}
\begin{proof}
  The proof uses the following equivalent norm for fractional Sobolev
  spaces $W^{s,p}(\R^n)$ (see Remark 4, p189 in Triebel
  \cite{triebel}): let $s=k+\sig$, $k$ is an integer and $0<\sig<1$:
$$\Mod{f}^p_{s,p} := \Mod{f}_{L^p}^p + \Mod{D^kf}_{L^p}^p + \int_{\R^n}\int_{\R^n}\frac{|D^kf(x)-D^kf(y)|^p}{|x-y|^{n+\sig p}}dx dy.$$
By substituting $k=0$ and $\sig=s$, and using Proposition
\ref{prop:compint}, we get
\begin{align*}
  \Mod{\Psi \circ f}^p_{W^{s,p}} &= \Mod{\Psi \circ f}^p_{L^p} + \int_{\R} \int_{\R} \frac {|\Psi(f(x))-\Psi(f(y))|^p}{|x-y|^{1+sp}}{d} x {d} y\\
  & \leq \bigMod{\frac {d\Psi} {df}}^p_{C^0}
  \left(\Mod{f}_{L^p}^p+\int_{\R}\int_{\R}\frac
    {|f(x)-f(y)|^p}{|x-y|^{1+sp}}{d} x {d} y\right) \\
&\leq c\bigMod{\frac
    {d\Psi} {df}}^p_{C^0}\Mod{f}^p_{W^{s,p}}.
\end{align*}
The bound on the derivative is a consequence of a similar bound on
$\frac {d\Psi} {df}$.
\end{proof}

The next result involving time-dependent sections in the Sobolev class $H^r(C^0)$ is used in the proof of the existence of heat flow
in Section \ref{subsec:flowexist}.
\begin{corollary} \label{cor:comptbd} Let $\hh<r<1$. Let $\Sig$, $X$
  be as in Section \ref{subsec:flowexist} and let $T>0$ be a
  constant. The bundle maps $\Theta_1$ and $\Theta_2$ in
  \eqref{eq:compeg} induce $C^1$-maps
  $$\F_{\Theta_i}:C^0(\Sig, H^r([0,T],P(\mathfrak{k}))) \to C^0(\Sig, H^r([0,T],P(\mathfrak{k})),$$
  for $i=1,2$. There is a constant $c(\Sig,X,\Phi)$ independent of $T$
  and $u_0$ such that
$$\Mod{\F_{\Theta_i}(\xi)}_{r,C^0} \leq c\Mod{\xi}_{r,C^0},\quad \Mod{{d}\F_{\Theta_i}(\xi)}_{r,C^0} \leq c(1+\Mod{\xi}_{r,C^0}).$$
\end{corollary}
\begin{proof} For any point $z \in \Sig$, the map $\Theta_i(z):\mathfrak{k}
  \to \End(\mathfrak{k})$ is a smooth map. By the compactness of $X$, we get a
  uniform bound for all $z$: $\Mod{\F_{\Theta_i(z)}}_{C^2} \leq
  c(X)$. By Proposition \ref{prop:compfrac}, it induces
$$\F_{\Theta_i(x)}:H^r([0,T],\mathfrak{k}) \to H^r([0,T],\End(\mathfrak{k})).$$
The norm of the operator and its derivative are uniformly bounded for
all $z \in \Sig$, from which the Corollary follows.
\end{proof}
\subsection{Sobolev extensions of smooth bundle maps}
A smooth bundle map between vector bundles induces maps between
Sobolev completions of sections of the vector bundles.  Suppose
$\pi_{E_1}:E_1 \to \Sig$, and $\pi_{E_2}:E_2 \to \Sig$ are vector
bundles and $\Psi:E_1 \to E_2$ is a smooth bundle map, i.e. it
satisfies $\Psi \circ \pi_{E_2}=\pi_{E_1}$ and the zero section is
mapped to the zero section. The bundle map $\Psi$ induces a map of
sections
$$\F_\Psi:\Gamma(\Sig,E_1) \to \Gamma(\Sig,E_2), \quad \xi \mapsto \F_\Psi \xi:=z \mapsto \Psi(\xi(z)), \quad z \in \Sig.$$
The maps $\Theta_1$ and $\Theta_2$ in \eqref{eq:compeg} are examples
of such maps between sections. We remark that $\F_\Psi$ can not be
viewed as a `composition of functions' operation as in Section
\ref{subsec:compfunc}.

Results on Sobolev completions of sections are obtained by working on
local trivializations.  Locally, the domain and target vector bundles,
$E_1$ and $E_2$ are the trivial bundles $U \times R^{m_1}$ and $U
\times \R^{m_2}$ respectively, where $m_1$, $m_2$ are positive
integers and $U \subset \R^n$ is a compact connected set of $\R^n$ and
has smooth boundary. The bundle map $\Psi$ can be locally written as 
$\Psi:U \times \R^{m_1} \to \R^{m_2}$, that satisfies
$\Psi(\cdot,0)=0$. Given a section $\xi:U \to \R^{m_1}$, we have
$\F_\Psi(\xi)(x):=\Psi(x,\xi(x))$.

\begin{proposition}[Local result for Sobolev extension of maps of
  sections]\label{prop:nem} Let $k \in \Z_{\geq
    0}$ and $p>1$. Suppose $\Psi:U \times \R^{m_1} \to \R^{m_2}$
  satisfies $\Psi(\cdot,0)=0$ and is in $C^l$, where $l \geq k$. For
  any smooth section $f \in \Gamma(U,\R^{m_1})$, suppose $\F_\Psi f
  \in \Gamma(U,\R^{m_2})$ be the section defined as $\F_\Psi
  f(x):=\Psi(x,f(x))$ for $x \in U$. Then $\F_\Psi$ extends to a
  continuous map
  \begin{equation}
    \label{eq:fsobmap}
    \F_\Psi:(W^{k,p} \cap C^0)(U,\R^{m_1})\to W^{k,p}(U,\R^{m_2}), 
  \end{equation}
  which satisfies
  \begin{equation}\label{eq:nemcont}
    \Mod{\F_\Psi(f)}_{W^{k,p}} \leq c\Mod \Psi_{C^k} \Mod f_{W^{k,p}}(1+\Mod{f}^{k-1}_{L^\infty}).
  \end{equation}
  Further, if $kp>n$, $\F_\Psi$ in \eqref{eq:fsobmap} is a
  $C^{l-k}$-map of Banach spaces. If $l \geq k+1$, its derivative
  satisfies
  \begin{equation}\label{eq:nemdiff}
    \Mod{d\F_\Psi(f)} \leq c\Mod{\Psi}_{C^{k+1}}(1+\Mod{f}_{W^{k,p}})(1+\Mod f_{L^\infty}^{k-1}).
  \end{equation}
  The constant $c$ is independent of $\Psi$ and $f$.
\end{proposition}
\begin{proof} [Proof of Proposition \ref{prop:nem}] It is enough to
  prove the result for $m_2=1$. We first show that $\F_\Psi$ is
  continuous at $f=0$. To bound $\Mod{\Psi(f)}_{W^{k,p}}$, we need to
  get an $L^p$-bound on terms of the form $\frac{\partial^I}{\partial
    x^I}\F_\Psi(f)$ where $I$ is a multi-index with $|I| \leq k$. For
  an index of length 1, we have $\frac{\partial}{\partial
    x_i}\F_\Psi(f)=\frac{\partial \Psi}{\partial f} \cdot
  \frac{\partial f}{\partial x_i}+\frac{\partial \Psi}{\partial x_i}.$
  So, $\frac{\partial^I}{\partial x^I}\F_\Psi(f)$ is a sum of terms of
  the form
  \begin{align}
    \label{eq:diffexp}
 &   \frac{\partial^{J+j} \Psi}{\partial x^{J} \partial f^j}\cdot
\left( \frac{\partial^{L_1} f}{\partial x^{L_1}}\right)\cdots
\left( \frac{\partial^{L_N} f}{\partial x^{L_N}}\right), \\
\notag &\text{where } |J|+j \leq k,|L_1|+\dots+|L_N| = j.
  \end{align}
Suppose
$j>0$. Denote $\ell_i:=|L_i|$ and let $p_i=jp/\ell_i$. Then,
\begin{align*}
  &\quad\ \left\lVert \frac{\partial^{J+j} \Psi}{\partial x^J \partial
      f^j}\right. \cdot \left( \frac{\partial^{L_1} f}{\partial
      x^{L_1}}\right)\cdots \left. \left( \frac{\partial^{L_N}
        f}{\partial x^{L_N}}\right) \right\rVert_{L^p}\\
  &\leq \Mod{\Psi}_{C^k} \Pi_{i=1}^N\bigMod{\frac{\partial^{L_i}f}{\partial x^{L_i}}}_{L^{p_i}}
 \leq \Mod{\Psi}_{C^k}\Pi_{i=1}^N \Mod f_{W^{\ell_i,p_i}}\\
 &\leq c \Mod{\Psi}_{C^k}\Pi_{i=1}^N \Mod f^{\ell_i/k}_{W^{k,p}} \cdot
  \Mod f_{L^\infty}^{1-\ell_i/k} \leq c \Mod{\Psi}_{C^k}\Mod{f}_{k,p}(1+\Mod
  f_{L^\infty}^{k-1}).
\end{align*}
The second bound is by H\"older's inequality and the third one is by
the Gagliardo-Nirenberg inequality (see Proposition B.1.18,
\cite{MS}). Suppose $j=0$. The term $\frac{\partial^J \Psi}{\partial
  x^J}$ vanishes for $f=0$. So, we have
\begin{align*}
  \bigMod{\frac{\partial^J \Psi}{\partial x^J}(x,f(x))}_{L^p} \leq
  \bigMod{\frac{\partial}{\partial f} \frac{\partial^J \Psi}{\partial
      x^J}}_{L^\infty}\cdot \Mod f_{L^2}=c\Mod f_{L^p}.
\end{align*}
This proves the inequality \eqref{eq:nemcont} and continuity of
$\F_\Psi$ at $f=0$. It is continuous at any $f \in W^{k,p}$ because
the operator $\Delta f \mapsto \F_\Psi(f+\Delta f)-\F_\Psi(f)$ is
continuous at $\Delta f=0$.

Now, we prove differentiability. We assume $kp>n$ and $l \geq
k+1$. For any $f \in W^{k,p}$, we claim that the derivative
${d}\F_\Psi(f)$ is given by $\frac{\partial \Psi}{\partial f} \circ
f$. That is,
the map
\begin{equation}
  \label{eq:dermult}
  {d}\F_\Psi(f):W^{k,p}(U) \to W^{k,p}(U), \quad   \Delta f \mapsto \Delta f \cdot \frac{\partial \Psi}{\partial f} \circ f
\end{equation}
is just multiplication by $\frac{\partial \Psi}{\partial f} \circ
f$. By the continuity result above, $\frac{\partial \Psi}{\partial f}
\circ f$ is in $W^{k,p}(U)$. Since $kp>n$, by Sobolev multiplication
(Proposition \ref{prop:sobmult}), \eqref{eq:dermult} is a bounded linear
operator that is bounded by
\begin{align}\label{eq:derbd}
  \Mod{{d}\F_\Psi(f)} \leq c\bigMod{\frac {\partial \Psi}{\partial f}
    \circ f}_{W^{k,p}} \leq
  c\Mod{\Psi}_{C^{k+1}}(1+\Mod{f}_{W^{k,p}})(1+\Mod{f}^{k-1}_{L^\infty}).
\end{align}
To obtain the last equality, we use the fact that $f \mapsto \frac
{\partial \Psi}{\partial f} \circ f$ is an operator similar to
\eqref{eq:fsobmap}. The only difference is that $\frac {\partial
  \Psi}{\partial f}$ does not vanish for $f=0$. But, we can apply the
continuity bound \eqref{eq:nemcont} on the map $f \mapsto \frac
{\partial \Psi}{\partial f}(f)-\frac {\partial \Psi}{\partial f}(0)$,
and the bound in \eqref{eq:derbd} now follows. Lastly, we show that
\eqref{eq:dermult} is indeed the Fr\'echet-derivative. This is because
\begin{align*}
 &\quad\ (\Psi(f+\Delta f)(x) - \Psi(f)(x) -d\F_\Psi(f)\Delta f)(x)\\
  &=\Delta f(x)^2 \int_0^1(1-t) \frac {\partial^2 \Psi}{\partial
    f^2}(x,(f+t\Delta f)(x))dt
\end{align*}
and $\Mod{\int_0^1(1-t) \frac {\partial^2 \Psi}{\partial
    f^2}(x,(f+t\Delta f)(x))dt}_{L^2} < c\Mod \Psi_{C^2}$.
\end{proof}

\begin{corollary}\label{cor:bundsob} Suppose $E_1$, $E_2 \to \Sig$ are vector bundles over a smooth manifold $\Sig$, possibly with boundary. Suppose $\Psi:E_1 \to E_2$ is a smooth bundle map. Suppose $k \in \Z_{\geq
    0}$ and $p>1$. The map $\Psi$ induces a continuous map of
  Sobolev completions
  \begin{equation}
    \label{eq:bundsob}
    \F_\Psi:\Gamma(\Sig,E_1)_{W^{k,p} \cap C^0} \to \Gamma(\Sig,E_2)_{W^{k,p}}.    
  \end{equation}
  Further, if $kp>\dim(\Sig)$, then $\F_\Psi$ is a smooth map.
\end{corollary}
\begin{proof}
After choosing local trivializations of $E_1$,
$E_2$ over a finite cover of $\Sig$, the result follows by using Proposition
\ref{prop:nem} on each element of the cover of~$\Sig$.
\end{proof}

\begin{corollary}\label{cor:bundsob_time} Assume the setting of Corollary
  \ref{cor:bundsob}. Suppose $k \in \Z_{\geq 0}$ and $p>1$.  The
  bundle map $\Psi$ induces bounded maps of time-dependent
  sections
 \begin{align}
  \label{eq:bund1}
  \quad\  \F_\Psi:L^p([0,T],W^{k,p}(\Sig,E_1)) &\cap C^0([0,T] \times \Sig, E)\\
  \notag &\to L^p([0,T],W^{k,p}(\Sig,E_2)). \\
\label{eq:bund2}    
\quad\ \F_\Psi:W^{k,p}([0,T],L^p(\Sig,E_1)) &\cap C^0([0,T] \times \Sig, E)\\
\notag &\to W^{k,p}([0,T],L^p(\Sig,E_2)).
  \end{align}
\end{corollary}
\begin{proof} The continuity of the operator \eqref{eq:bund1} follows by Corollary \ref{cor:bundsob} and \eqref{eq:bundlemap}. The operator \eqref{eq:bund2} is handled in an identical way to the operator in Corollary \ref{cor:comptbd}.
For any $z \in \Sig$, the operator $\F_{\Psi(z)}:W^{k,p}([0,T],(E_1)_z) \cap C^0 \to W^{k,p}([0,T],(E_2)_z)$ is a `composition of functions' operator, whose $L^\infty$ norm can be uniformly bounded for all $z$ using Proposition \ref{prop:compint}. The uniform $L^\infty(\Sig)$ bound implies a $L^p(\Sig)$ bound, proving the boundedness of \eqref{eq:bund2}.
\end{proof}

\appendix
\section{Some analytic results}
In this section, we collect some analytic results used at various places in the paper.
The following is 
Proposition A.3.4 in \cite{MS}.
\begin{proposition} \label{prop:impfnMS}{\rm(Implicit function theorem)} Let $Y_1$, $Y_2$ be Banach spaces, and $\S \subset Y_1$ be an open set containing the origin.  Let $\F: \S \to Y_2$ be a differentiable map. Suppose $D\F(0)$ is invertible and $\Mod{D\F(0)^{-1}} \leq C$. Let
  $\delta>0$ be a constant such that $B_\delta \subset \S$ and for all $x \in B_\delta$,
  $\Mod{D\F(x)-D\F(0)}<\frac 1 {2C}$. If $\Mod{\F(0)}<\frac \delta {4C}$,
  there is a unique $x \in B_\delta$ for which $\F(x)=0$.
\end{proposition}
The next result is a small addition to the above implicit function Theorem.
\begin{lemma}\label{lem:inj_convex} Let $Y_1$, $Y_2$, $\S$ and $\F$ be as in Proposition \ref{prop:impfnMS}. Suppose $\Mod{D\F(0)^{-1}} \leq C$, $\S$ is convex and $\Mod{DF(x)-DF(0)}<\frac 1 {2C}$ for all $x \in \S$, then, $\F$ is injective on $\S$.
\end{lemma}
\begin{proof} Let $\F_1:=D\F(0):Y_1 \to Y_2$ be a linear map and $\F_2:=\F-\F_1$ on $\S$. Then, we have $\Mod{D\F_2(x)}<\frac 1 {2C}$ for all $x \in \S$.
For any $x_1$, $x_2 \in \S$, the line segment joining $x_1$, $x_2$ is contained in $\S$. Then,
  \begin{align*}
    \Mod{\F_1(x_2) - \F_1(x_1)} &= \Mod{\F_1(x_2-x_1)} \geq \frac 1 C \Mod{x_1 -x_0}\\
    \Mod{\F_2(x_2) - \F_2(x_1)} & \leq \frac 1 {2C} \Mod{x_2 -x_1}.\\
    \implies \Mod{\F(x_2) - \F(x_1)} & \geq \frac 1 {2C} \Mod{x_2 -x_1}.
  \end{align*}
  which proves the result.
\end{proof}

\begin{proposition}{\rm (Sobolev multiplication)}\label{prop:sobmult} 
Let $\Sig$ be an  $n$-dimensional compact Riemannian manifold possibly with a smooth boundary. 
\begin{enumerate}
\item {\rm(\cite[Theorem 4.39]{Adams:sobspace})} Given $k \in \Z$ and $p>1$ be such that $kp>n$. Then, $W^{k,p}(\Sig)$ is a Banach algebra with respect to pointwise multiplication. There is a constant $c$ such that for any $f$, $g \in W^{k,p}(\Sig)$
  \begin{equation*}
    \Mod{fg}_{W^{k,p}} \leq c \Mod f_{W^{k,p}}\Mod g_{W^{k,p}}.
  \end{equation*}
\item {\rm(\cite[Theorem 9.5 (3)]{Palais})} Suppose $l \in \Z$, $k_i \in \Z_{\geq 0}$ and $q$, $p_i>1$, where $i=1, 2$. Suppose at least for one $i$, $k_ip_i<n$. Let $l \leq k_1, k_2$ and $l - \frac n q < \sum_{i=1,2}k_i - \frac n {p_i}$. Further, if $l<0$, then we assume $\sum_{i:k_ip_i<n}(\frac n {p_i}-k_i)<n$. Then, there is a constant $c$ such that for any $f \in W^{k_1,p_1}(\Sig)$, $g \in W^{k_2,p_2}(\Sig)$
  \begin{equation*}
    \Mod{fg}_{W^{l,q}} \leq c \Mod f_{W^{k_1,p_1}}\Mod g_{W^{k_2,p_2}}.
  \end{equation*}
\end{enumerate}
\end{proposition}
The next result is a slight extension of Uhlenbeck's compactness theorem (\cite{Uh:compactness}, \cite{Weh:Uh}).
\begin{proposition}\label{prop:uhhigh} {\rm(Uhlenbeck compactness for higher regularity connections)} Let $M$ be a $2$-dimensional compact Riemannian manifold, possibly with a smooth boundary. Let
  $p>1$. Further,
  let $P \to M$ be a principal $K$-bundle and $\{A_i\}_i$ be a sequence
  of $W^{k,p}$-connections on $P$ whose curvature satisfies a uniform bound $\Mod{F(A_i)}_{W^{k-1,p}(M)} < c$.
Then, there exists a sequence of gauge transformations $k_i \in
\K^{k+1,p}(P)$ and a connection $A_\infty \in \A^{k,p}(P)$
such that $k_iA_i$ weakly converges in $W^{k,p}$ to $A_\infty$.
\end{proposition}
\begin{proof}
The proof of Uhlenbeck compactness consists of a local theorem
followed by patching
arguments. We first recall the local result for $k=1$. Choose $1  < q < p$. The local theorem (theorem B in 
\cite{Weh:Uh}) says that there is a constant $\eps_{Uh}$ such that any point $m \in M$ has a neighborhood $U$ with smooth boundary satisfying the following:  
if a connection $A$ satisfies $\Mod{F_A}_{L^q(U)}$, then there is a gauge transformation $k$ that puts $A$ in Coulomb gauge, i.e. if $k(A)={d} + a$ then,
\begin{equation}
  \label{eq:coul}
  d^*a=0,\quad (*a)|_{\partial U_\alpha}=0, \quad
\Mod{a}_{W^{1,p}} \leq c\Mod{F_{A}}_{L^p}.
\end{equation} 
By a dilation
argument, it can be shown that given $\kappa>0$, there is a cover
$\Sig=\cup_\alpha U_\alpha$ such that for any connection $A$
satisfying a curvature bound $\Mod{F_A}_{L^p(\Sig)}<\kappa$, on the
sets $U_\alpha$, the $L^q$ bound of the curvature is smaller than
$\eps_{Uh}$, ensuring that the local theorem stated earlier is
applicable. 

To prove a corresponding result for $k>1$, we strengthen the above local result. We will prove that given a connection $A={d} + a$ in Coulomb gauge on an open $U$, i.e. $a$ satisfies \eqref{eq:coul}, then, 
\begin{equation}
  \label{eq:ihuh}
\forall K\, \exists c(K): \quad \Mod{F(A)}_{W^{k-1,p}} \leq K \implies \Mod{a}_{W^{k,p}}  \leq c(K).  
\end{equation}
We use an inductive argument to prove \eqref{eq:ihuh}. The statement is already true for $k=1$. So, we assume $k \geq
2$ and that \eqref{eq:ihuh} is true when $k$ is replaced by
$k-1$. We also assume $\Mod{F(A)}_{W^{k-1,p}} \leq K$. By the induction
hypothesis, we have $\Mod{a}_{W^{k-1,p}} \leq c(K)$. We first focus
on the case $(k-1)p > 2$. In that case, by Sobolev multiplication,
$[a \wedge a]$ has a $W^{k-1,p}$-bound. Then, by the formula
$F(A)={d} a + [a \wedge a]$, we have a $W^{k-1,p}$-bound on $da$. Since $a$ is in Coulomb gauge, by elliptic regularity (the proof of theorem
5.1 in \cite{Weh:Uh} carries over to the higher regularity case),
$$\Mod{a}_{W^{k,p}} \leq \Mod{{d} a}_{W^{k-1,p}} \leq c(K).$$
It remains to prove \eqref{eq:ihuh} when $(k-1)p \leq n$. In that case
 $k=2$ and $p \leq 2$. In fact, the only part of
the proof that remains is to show a $W^{1,p}$ bound on $[a \wedge
a]$ assuming a $W^{1,p}$-bound on $a$, a $W^{1,p}$-bound on ${d} a +
[a \wedge a]$ and the fact that $a$ is in Coulomb gauge. This is
done by a bootstrapping argument. There exists a number $\ell \geq 1$
and a sequence $p=q_0 < q_1 < \dots < q_{\ell-1} \leq 2 < q_\ell$ such
that for any $i\geq 1$, $q_i < q_{i-1}/(2-q_{i-1})$. Then, if
$a \in W^{1,q_{i-1}}$, by Sobolev embedding $a \in L^{2q_i}$. By
H\"older's theorem, $[a \wedge a] \in L^{q_i}$ and therefore ${d}
a$ is also in $L^{q_i}$. By the Coulomb gauge condition, elliptic
regularity yields $a \in W^{1,q_i}$. Starting with $a \in
W^{1,p}$ and applying these steps repeatedly, we get $a \in
W^{1,q_\ell}$, where $q_\ell>2$. Then, $[a \wedge a]$ is also in
$W^{1,q_\ell}$, and hence in $W^{1,p}$.

The arguments for patching gauge transformations on the open sets $U_\alpha$ are
identical for the higher regularity case - see Lemma
3.2, 3.3 in \cite{Uh:compactness}. The weak convergence then follows in a similar way to the case when $k=0$ which is proved in \cite{Uh:compactness} and \cite{Weh:Uh}.
\end{proof}

\end{document}